\theoremstyle{plain}
\newtheorem{theo}{Theorem}[section]
\newtheorem{lemme}[theo]{Lemma}
\newtheorem{prop}[theo]{Proposition}
\newtheorem{coro}[theo]{Corollary}
\theoremstyle{definition}
\newtheorem{defn}[theo]{Definition}
\theoremstyle{remark}
\newtheorem{rema}[theo]{Remark}
\def\Sp{\mathop{\rm Sp}\nolimits}
\def\M{\mathop{\rm M}\nolimits}
\def\det{\mathop{\rm det}\nolimits}
\def\O{\mathop{\rm O}\nolimits}
\def\GL{\mathop{\rm GL}\nolimits}	
\def\End{\mathop{\rm End}\nolimits}\def\Sp{\mathop{\rm Sp}\nolimits}
\def\Hom{\mathop{\rm Hom}\nolimits}
\def\Ker{\mathop{\rm Ker}\nolimits}
\def\det{\mathop{\rm det}\nolimits}
\def\U{\mathop{\rm U}\nolimits}
\def\G{\mathop{\rm G}\nolimits}
\def\K{\mathop{\mathbb{K}}\nolimits}
\def\S{\mathop{\rm S}\nolimits}
\def\T{\mathop{\rm T}\nolimits}
\def\dim{\mathop{\rm dim}\nolimits}
\def\M{\mathop{\rm M}\nolimits}
\def\C{\mathop{\mathbb{C}}\nolimits}
\def\sgn{\mathop{\rm sgn}\nolimits}
\def\diag{\mathop{\rm diag}\nolimits}
\def\pr{\mathop{\rm pr}\nolimits}
\def\Re{\mathop{\rm Re}\nolimits}
\def\Pin{\mathop{\rm Pin}\nolimits}
\def\Id{\mathop{\rm Id}\nolimits}
\def\H{\mathop{\mathbb{H}}\nolimits}
\def\F{\mathop{\rm F}\nolimits}
\def\T{\mathop{\rm T}\nolimits}
\def\C{\mathop{\rm C}\nolimits}
\def\E{\mathop{\rm E}\nolimits}
\def\F{\mathop{\rm F}\nolimits}
\def\Id{\mathop{\rm Id}\nolimits}
\def\Pin{\mathop{\rm Pin}\nolimits}
\def\Cliff{\mathop{\rm Cliff}\nolimits}
\def\Hom{\mathop{\rm Hom}\nolimits}
\def\SO{\mathop{\rm SO}\nolimits}
\def\Spin{\mathop{\rm Spin}\nolimits}
\def\U{\mathop{\rm U}\nolimits}
\def\K{\mathop{\rm K}\nolimits}
\def\S{\mathop{\rm S}\nolimits}
\def\Comm{\mathop{\rm Comm}\nolimits}
\def\C{\mathop{\rm C}\nolimits}
\def\D{\mathop{\rm D}\nolimits}
\def\H{\mathop{\rm H}\nolimits}
\def\Fix{\mathop{\rm Fix}\nolimits}
\def\Id{\mathop{\rm Id}\nolimits}
\def\V{\mathop{\rm V}\nolimits}
\def\W{\mathop{\rm W}\nolimits}
\def\b{\mathop{\rm b}\nolimits}
\def\irr{\mathop{\rm irr}\nolimits}
\def\Ex{\mathop{\rm Ex}\nolimits}
\def\D{\mathop{\rm D}\nolimits}
\def\P{\mathop{\rm P}\nolimits}
\def\R{\mathbb{R}}
\def\tilde{\widetilde}
\title{Dual pairs in the Pin-group and duality for the corresponding spinorial representation}
\author[1]{Cl\'ement Gu\'erin}
\address[1]{Universit\'e du Luxembourg \\ Campus Belval \\ Maison du Nombre \\ 6 Avenue de la Fonte \\ L-4364 Esch-sur-Alzette \\ Luxembourg}
\email[1]{clement.guerin@uni.lu}
\author[2]{Gang Liu}
\address[2]{Universit\'e de Lorraine \\ Institut Elie Cartan de Lorraine \\ 3 rue Augustin Fresnel \\ 57073 Metz, France}
\email[2]{gang.liu@univ-lorraine.fr}
\author[3]{Allan Merino}
\address[3]{ Department of Mathematics \\ National University of Singapore \\ Block S17 \\ 10, Lower Kent Ridge Road \\ Singapore 119076 \\ Republic of Singapore}
\email[3]{matafm@nus.edu.sg}
\date{}
\begin{document}

\maketitle

\begin{abstract}

In this paper, we give a complete picture of Howe correspondence for the setting ($\O(\E, \b), \Pin(\E, \b), \Pi$), where $\O(\E, \b)$ is an orthogonal group (real or complex), $\Pin(\E, \b)$ is the two-fold Pin-covering of $\O(\E, \b)$, and $\Pi$ is the spinorial representation of $\Pin(\E, \b)$. More precisely, for a dual pair ($\G, \G'$) in $\O(\E, \b)$, we determine explicitly the nature of its preimages $(\tilde{\G}, \tilde{\G'})$ in $\Pin(\E, \b)$, and prove that apart from some exceptions, $(\tilde{\G}, \tilde{\G'})$ is always a dual pair in $\Pin(\E, \b)$; then we establish the Howe correspondence for $\Pi$ with respect to $(\tilde{\G}, \tilde{\G'})$.

\end{abstract}

\tableofcontents

\section{Introduction}

The first duality phenomenon has been discovered by H. Weyl who pointed out a correspondence between some irreducible finite dimensional representations of the general linear group $\GL(\V)$ and the symmetric group $\mathscr{S}_{k}$ where $V$ is a finite dimensional vector space over $\mathbb{C}$. Indeed, considering the joint action of $\GL(\V)$ and $\mathscr{S}_{d}$ on the space $\V^{\otimes d}$, we get the following decomposition:
\begin{equation*}
\V^{\otimes d} = \bigoplus\limits_{(\V_{\lambda}, \lambda) \in \widehat{\GL(\V)}_{\pi}} \lambda \otimes \sigma^{\lambda},
\end{equation*}
where $\widehat{\GL(\V)}_{\pi}$ is the set of equivalence classes of irreducible finite dimensional representations of $\GL(\V)$ such that $\Hom_{\GL(\V)}(\V_{\lambda}, \V^{\otimes d}) \neq \{0\}$ and $\sigma_{\lambda}$ is an irreducible representation of $\mathscr{S}_{d}$. The key point in this duality is that the commutator in $\End(V^{\otimes d})$ of the algebra generated by $\GL(\V)$ is precisely the algebra generated by $\mathscr{S}_{d}$.

\bigskip

Later, R. Howe \cite{HOW4} proved a similar phenomenon involving some particular subgroups of the symplectic group. Let $(\W, \left<\cdot, \cdot\right>)$ be a real symplectic space and $(\G, \G')$ be an irreducible reductive dual pair in the corresponding symplectic group $\Sp(W)$. We denote by $\widetilde{\Sp(\W)}$ the metaplectic group and by $(\omega, \mathscr{H})$ the metaplectic representation. One can prove that the preimages $\tilde{\G}$ and $\tilde{\G}'$ of $\G$ and $\G'$ respectively form a dual pair in the metaplectic group $\widetilde{\Sp(\W)}$. We denote by $\mathscr{R}(\tilde{\G}, \omega)$ the set of infinitesimal classes of irreducible admissible representations of $\tilde{\G}$ which can be realised as a quotient of $\mathscr{H}^{\infty}$ by a closed $\omega^{\infty}(\tilde{\G})$-invariant subspace. R. Howe proved that we have a one-to-one correspondence between $\mathscr{R}(\tilde{\G}, \omega)$ and $\mathscr{R}(\tilde{\G}', \omega)$, whose graph is $\mathscr{R}(\tilde{\G}\tilde{\G}', \omega)$. This correspondence is known as Howe correspondence or Theta correspondence.

\bigskip

We can define in a similar way the notion of (reductive) dual pairs for more general groups, among which the orthogonal groups. It is noted that the complete list of reductive dual pairs of  orthogonal groups is already known (\cite{HOW1} and \cite{SCH}) Let $\E$ be a vector space over $\mathbb{K} = \mathbb{R}$ or $\mathbb{C}$ endowed with a non-degenerate symmetric bilinear form $\b$ and $\O(\E, \b)$ be the corresponding group of isometries. We denote by $\Pin(\E, \b)$ the $\Pin$-cover of $\O(\E, \b)$ (Section \ref{SectionPreliminaries}, Equation \eqref{DefinitionPinGroup}). As pointed out by Slupinski in \cite{SLU}, one of the difference with the symplectic case is that for a dual pair $(\G, \G')$ in $\O(\E, \b)$, the preimages $\tilde{\G}$ and $\tilde{\G'}$ do not commute in $\Pin(\E, \b)$ in general. One example is given in Slupinski's paper \cite{SLU}. In his paper, he studied the dual pair $(\O(\V), \O(\W))$ in $\O(\V \otimes \W)$ (when the field $\mathbb{K}$ over which the vector spaces are defined satisfies $\mathbb{K}^{*}/(\mathbb{K}^{*})^{2} \approx \mathbb{Z}_{2}$ and $-1 \notin (\mathbb{K}^{*})^{2})$. In particular, he proved that if one of the space $\V$ or $\W$ is even dimensional (and $\dim_{\mathbb{K}}(\V) \neq 2)$, the preimages $\widetilde{\O(\V)}$ and $\widetilde{\O(\W)}$ do not commute in the $\Pin$-group (nevertheless, he proved that $(\widetilde{\SO(\V)}, \widetilde{\O(\W)})$ is actually a dual pair in $\Pin(\V \otimes \W)$). 
Moreover, for the dual pairs he obtained in the $\Pin$-group, he proved that the spinorial representation $(\Pi, \S)$ of $\Pin(\E, \b)$ sets up a Howe correspondence, i.e.
\begin{equation*}
\Pi = \bigoplus\limits_{\lambda} \lambda \otimes \theta(\lambda),
\end{equation*}
where $\lambda$ runs over the irreducible finite dimensional representations of $\tilde{\G}$ such that $\Hom_{\tilde{\G}}(\lambda, \Pi) \neq \{0\}$ and $\theta(\lambda)$ is an irreducible representation of $\tilde{\G}'$. Hints of this duality were given by R. Howe in \cite{HOW3}.

Let $(\G, \G')$ be a dual pair in $\O(\E, \b)$ and let $(\tilde{\G}, \tilde{\G'})$ be the preimage of $(\G, \G')$ in $\Pin(\E, \b)$. The goal of this paper is twofold. Firstly, we want to determine the nature of  the covering pair $(\tilde{\G}, \tilde{\G'})$, more precisely, we want to know when $(\tilde{\G}, \tilde{\G'})$ is a dual pair in  $\Pin(\E, \b)$, and we also want to determine the isomorphism classes of $\tilde{\G}$ and  $\tilde{\G'}$. Secondly, in the case where$(\tilde{\G}, \tilde{\G'})$ is a dual pair in  $\Pin(\E, \b)$,  we want to prove that the spinorial representation of  $\Pin(\E, \b)$ sets up a Howe correspondance for $(\tilde{\G}, \tilde{\G'})$.

\bigskip

The classification of the dual pairs in the orthogonal group (see Corollary \ref{ClassificationDP}) can be found in \cite{HOW1}. One can also check the paper of Schmidt \cite{SCH} who classified the irreducible dual pairs in the classical groups.  Concerning the dual pairs in $\Pin(\E, \b)$, the method we use in this paper is an adaptation of the one developed by Slupinski in \cite{SLU}. 

We have the following result (with respect to the list given in Corollary \ref{ClassificationDP}).

\begin{restatable}{theo}{dualift}
\label{prop:dualift}
Let $\E$ be a real vector space endowed with a non-degenerate, symmetric, bilinear form $\b$ and $(\G, \G')$ be an irreducible dual pair in $\O(\E, \b)$ which is not real orthogonal (treated in \cite{SLU}). Then its lift $(\tilde{\G},\tilde{\G}')$ in $\Pin(\E, \b)$ is an irreducible dual pair.

\end{restatable}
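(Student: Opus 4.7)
The plan is to split the verification into two standard steps: (i) show that the preimages $\tilde{\G}$ and $\tilde{\G}'$ commute elementwise in $\Pin(\E,\b)$, and (ii) show that they are mutual centralizers there. Step (ii) is essentially formal once (i) is established. Since the covering $\Pin(\E,\b) \twoheadrightarrow \O(\E,\b)$ is a central $\{\pm 1\}$-extension, the centralizer $C_{\Pin(\E,\b)}(\tilde{\G})$ maps onto $C_{\O(\E,\b)}(\G) = \G'$ (using the dual pair hypothesis in $\O(\E,\b)$) and contains the kernel $\{\pm 1\}$. Combined with (i), this forces $C_{\Pin(\E,\b)}(\tilde{\G}) = \tilde{\G}'$, and symmetrically for the other inclusion.

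For step (i), the natural tool, following the strategy of Slupinski, is an explicit commutator formula inside the Clifford algebra. Writing $g \in \G$ and $g' \in \G'$ as products of reflections $g = r_{v_1}\cdots r_{v_k}$ and $g' = r_{w_1}\cdots r_{w_\ell}$, one lifts them to $\tilde g = v_1\cdots v_k$ and $\tilde{g}' = w_1 \cdots w_\ell$ in $\Cliff(\E,\b)$, and checks that
\[
\tilde{g}\tilde{g}' = (-1)^{N(g,g')}\,\tilde{g}'\tilde{g},
\]
where $N(g,g')$ counts (mod $2$) the number of pairs $(i,j)$ with $\b(v_i,w_j) \ne 0$. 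A preliminary lemma would establish that $N(g,g')$ depends only on $g,g'$, not on the choice of reflection decomposition, and that it is expressible in terms of a canonical invariant of the commuting pair $(g,g')$ (for instance, a parity of the intersection of reflecting hyperplanes with appropriate eigenspaces).

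The core of the proof is then a case-by-case verification along the classification in Corollary \ref{ClassificationDP}. Excluding the real orthogonal case already treated in \cite{SLU}, one is left with the unitary pairs $(\U(p,q),\U(r,s))$, the quaternionic pairs built from $\Sp(p,q)$ and $\O^*(2n)$, the type II pair $(\GL_m(\R), \GL_n(\R))$ (and its complex/quaternionic analogues) acting on a polarized space $\E = \V\oplus\V^*$, and the complex dual pairs $(\O(m,\C),\Sp(2n,\C))$ viewed over $\R$. In each non-orthogonal case one of the factors carries an extra structure (complex or quaternionic linearity, or an isotropic polarization) commuting with $\b$; this extra structure forces the relevant $(-1)$-eigenspaces to have even real dimension, which makes $N(g,g')$ even and hence $[\tilde{g},\tilde{g}'] = 1$.

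The main obstacle will be organizing this sign calculation uniformly across the different series, since each family requires identifying the symmetry that renders $N$ even. The type II case is the most delicate: one cannot use nonisotropic reflections inside $\V$ or $\V^*$ themselves, so one has to work with reflections through vectors transverse to the polarization, or equivalently construct a canonical section of $\Pin(\E,\b)$ over $\GL(\V) \hookrightarrow \O(\E,\b)$ that makes the commutation with $\GL(\W)$ manifest. Once these case-specific identifications are in place, the assembly into the final statement is routine.
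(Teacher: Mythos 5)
Your high-level outline matches the paper's: step (ii) is exactly the paper's Lemma \ref{comdual}, and the real content is reduced to checking that $\tilde{\G}$ and $\tilde{\G}'$ commute elementwise. Where you diverge, however, is in the method for step (i), and there are both a missed simplification and a concrete technical error.

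The paper exploits a topological observation you do not use: the commutator map $\psi\colon \G\times\G'\to Z(\Pin(\E,\b))$, $\psi(g,g')=[\tilde g,\tilde g']$, is well defined, continuous, and lands in a discrete group, hence is constant on connected components and factors through $\pi_0(\G)\times\pi_0(\G')$. This disposes of \emph{all} cases where both factors are connected (unitary, real and complex symplectic, quaternionic, and the complex and quaternionic $\GL$'s) with zero computation (Lemma \ref{LemmaConnected}), leaving only $(\O(n,\C),\O(m,\C))$ and $(\GL(n,\R),\GL(m,\R))$, where one picks a single explicit representative of each nontrivial component and verifies the commutation by hand in the Clifford algebra. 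Your plan, by contrast, proposes a uniform reflection-decomposition argument across every series, which is far more work than necessary and requires a nontrivial well-definedness lemma you do not supply.

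More seriously, the commutator formula you propose is wrong. You claim $\tilde g\tilde g'=(-1)^{N(g,g')}\tilde g'\tilde g$ where $N(g,g')$ counts pairs $(i,j)$ with $\b(v_i,w_j)\neq 0$. But in $\Cliff(\E,\b)$ the relation is $v w + w v = \b(v,w)\cdot 1$: if $\b(v,w)=0$ then $vw=-wv$ (they \emph{anti}commute), and if $\b(v,w)\neq 0$ they neither commute nor anticommute, so no clean sign formula of this shape can hold. Even in the favorable case where all $v_i$ are orthogonal to all $w_j$, your $N$ is $0$ while the actual sign is $(-1)^{k\ell}$ with $k,\ell$ the numbers of reflections; the evenness you need comes from $g,g'\in\SO(\E,\b)$ (so $k,\ell$ even), not from a count of nonorthogonal pairs. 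The correct bookkeeping is what the paper's explicit reordering computations for $gh$ and $\tilde s\,\tilde t$ are doing. Finally, one of the families you list, ``$(\O(m,\C),\Sp(2n,\C))$ viewed over $\R$,'' is not a dual pair in any $\O(\E,\b)$ in Corollary \ref{ClassificationDP} (orthogonal dual pairs in an orthogonal group pair like with like), so it should not appear in your case list.
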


We get the same results if the space $\E$ is complex (see Remark \ref{CoverComplexPairs}). Furthermore, in Section \ref{SectionDualPairs} we determine the nature of the double covers $\tilde{\G}$ and $\tilde{\G'}$.

\begin{restatable}{prop}{classlift}
\label{theo:classlift}

Let $(\G, \G')$ be an irreducible dual pair in $\O(\E, \b)$ which is not real orthogonal. If $\G, \G'$ are not unitary groups then their lifts in $\Pin(\E, \b)$ are trivial extensions (i.e. direct products). 

If $\G = \U(p_1,q_1)$ and $\G' = \U(p_2,q_2)$ then the lift of $\G$ in $\Pin(2(p_1p_2+q_1q_2), 2(p_1q_2+q_1p_2), \R)$ is isomorphic to 
\[\left\lbrace\begin{array}{ll}\Lambda(p_1,q_1)&\text{if $p_2+q_2$ is odd}\\ \G \times \{\pm 1\}&\text{ otherwise}\end{array}\right.\]

\end{restatable}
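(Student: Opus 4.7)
The plan is to proceed case-by-case through the classification of irreducible dual pairs in $\O(\E, \b)$ recalled in Corollary \ref{ClassificationDP}, skipping the real orthogonal case treated in \cite{SLU}. The guiding observation is that, for a connected Lie subgroup $\G \subseteq \SO(\E, \b)_{0}$, the pull-back of the $\Spin$ double cover is a central $\mathbb{Z}/2$-extension of $\G$ classified by the map $\pi_{1}(\G) \to \pi_{1}(\SO(\E,\b)_{0}) \cong \mathbb{Z}/2$ induced by the inclusion; this extension splits (i.e.\ $\tilde{\G} \cong \G \times \{\pm 1\}$) if and only if this map vanishes. For a disconnected $\G$ one must additionally lift coset representatives of the component group and check, via an explicit Clifford algebra computation, that the chosen lifts have order~$2$.

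I would first dispatch the simply connected cases -- the quaternionic pair $(\Sp(p_{1},q_{1}), \Sp(p_{2},q_{2}))$ and the complex symplectic pair $(\Sp_{2m}(\mathbb{C}), \Sp_{2n}(\mathbb{C}))$ -- where $\pi_{1}(\G) = 0$ makes the monodromy map automatically trivial. For the complex orthogonal pair $(\O_{m}(\mathbb{C}), \O_{n}(\mathbb{C}))$ and the type~II pairs $(\GL_{k}(\K), \GL_{l}(\K))$, I would pick a generator of $\pi_{1}$ of the identity component -- for instance a complex rotation in a single $2$-plane, which generates $\pi_{1}(\SO_{n}(\mathbb{C})) \cong \mathbb{Z}/2$ -- and observe that, once embedded in the ambient real orthogonal group via the tensor product, it becomes a simultaneous rotation in an \emph{even} number of real $2$-planes (namely $2m$ in the complex orthogonal example). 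The monodromy is therefore $(-1)^{\text{even}}=+1$, so the identity component lifts split. The extra components are then handled by lifting an explicit reflection (or orientation-reversing element) and verifying that it squares to $+1$ in the Clifford algebra, which follows from the split signature inherited on the relevant subspace.

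The core of the argument is the unitary pair. The fundamental group $\pi_{1}(\U(p_{1},q_{1}))$ is generated by the diagonal loops $\gamma_{j} \colon \theta \mapsto \diag(1,\ldots,e^{i\theta},\ldots,1)$. Each such $\gamma_{j}$ acts on $\mathbb{C}^{(p_{1}+q_{1})(p_{2}+q_{2})} \cong \mathbb{R}^{2(p_{1}+q_{1})(p_{2}+q_{2})}$ as rotation by $\theta$ in exactly $p_{2}+q_{2}$ real $2$-planes, so the monodromy of each generator is $(-1)^{p_{2}+q_{2}}$. When $p_{2}+q_{2}$ is even the monodromy vanishes on every generator and the extension splits as $\U(p_{1},q_{1}) \times \{\pm 1\}$; when $p_{2}+q_{2}$ is odd every generator maps to the non-trivial element of $\mathbb{Z}/2$, producing a non-trivial double cover which is then identified with $\Lambda(p_{1},q_{1})$. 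The main obstacle I expect is exactly this last identification: matching the extension produced by the monodromy computation with the specific group $\Lambda(p_{1},q_{1})$ defined earlier in the paper, including the low-rank degenerations $p_{1}=0$ or $q_{1}=0$ where $\pi_{1}(\G) \cong \mathbb{Z}$ rather than $\mathbb{Z}^{2}$.
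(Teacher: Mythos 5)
Your approach is genuinely different from the paper's and is sound in outline, but I want to flag a few places where it is imprecise or where the sketch glosses over real work.

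\textbf{What differs.} You classify the pull-back by the monodromy map $\pi_{1}(\G)\to\mathbb{Z}/2$, computed by counting the real $2$-planes rotated by a generating loop of $\pi_{1}(\G)$ once embedded in the ambient $\SO_{0}(\E,\b)$. The paper instead (i) uses Lemma~\ref{redcomp} to reduce to the maximal compact subgroup, (ii) proves a multiplicativity statement (Proposition~\ref{Decomp}) via the graded tensor decomposition of Clifford algebras, which reduces the general $(p_{2},q_{2})$ to the base cases $(1,0)$ and $(0,1)$, and (iii) disposes of most cases by the diagonal-embedding observation (Lemma~\ref{trivialpullback}): if a group sits diagonally inside $\O(k,\mathbb{R})\times\O(k,\mathbb{R})\subset\O(k,k,\mathbb{R})$, its pull-back splits for purely group-theoretic reasons. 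Your route buys a one-shot computation; the paper's route buys a cleaner treatment of disconnected groups (the diagonal trick makes no reference to $\pi_{0}$) and isolates the multiplicativity in a reusable lemma. Both arrive at the same sign $(-1)^{p_{2}+q_{2}}$ in the unitary case, and both rely on the same external input (Meinrenken, that $\U(n)\hookrightarrow\SO(2n,\mathbb{R})$ lifts to the $\det^{1/2}$-cover) to pin down $\Lambda(p_{1},q_{1})$.

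\textbf{Two points to tighten.} First, the assertion $\pi_{1}(\SO(\E,\b)_{0})\cong\mathbb{Z}/2$ is false for indefinite signature: for $p,q\geq 3$ one has $\pi_{1}(\SO_{0}(p,q))\cong\mathbb{Z}/2\times\mathbb{Z}/2$, and for low $p$ or $q$ there are $\mathbb{Z}$-factors. The correct statement is that the chosen double cover $\Pin(\E,\b)$ determines a homomorphism $\pi_{1}(\SO_{0}(\E,\b))\to\mathbb{Z}/2$ (the connecting map of the covering), and the pull-back of a connected $\G$ is classified by the composite $\pi_{1}(\G)\to\pi_{1}(\SO_{0})\to\mathbb{Z}/2$. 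Your $2$-plane counting computes exactly this composite, so the conclusions survive, but the statement as written is inaccurate. Second, for the disconnected families ($\O(n,\mathbb{C})$, $\GL(n,\mathbb{R})$, $\GL(n,\mathbb{C})$), showing that a reflection lifts to an order-two element is necessary but not sufficient: you must also check that this lift normalizes the splitting you produced over the identity component. That does follow here because the identity components are connected (so have no non-trivial maps to $\{\pm 1\}$, forcing the two possible sections to coincide), but the argument needs to be said; as written the sketch jumps from ``monodromy trivial on $\G^{0}$'' and ``reflection squares to $+1$'' to ``the extension of $\G$ is trivial.'' The paper's diagonal-embedding lemma sidesteps this entirely.

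Finally, you correctly identify the remaining hurdle — matching the non-trivial extension with $\Lambda(p_{1},q_{1})$ and handling the degenerate cases $p_{1}q_{1}=0$ — but leave it open. The paper closes it by restricting to the maximal compact $\U(p_{1})\times\U(q_{1})$ and invoking the known non-triviality of each factor's lift, which is the same data your monodromy computation produces; so this gap is one of exposition rather than substance.
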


Here, $\Lambda(p,q)$ denotes the unique lift of $\U(p,q)$ which restricts to a non-trivial lift above $\U(p)$ and $\U(q)$. The same kind of result is proved for dual pairs in the complex orthogonal group (see Section \ref{SectionDualPairs}, Remark \ref{CoverComplexPairs})

\bigskip 

For the dual pairs found in $\Pin(\E, \b)$ in Section \ref{SectionDualPairs}, we also prove that the spinorial representation sets up a Howe correspondence. 
In Section \ref{SectionDualityPin}, we prove:

\begin{restatable}{theo}{theodual}
\label{theo:dual}
Let $(\tilde{\G}, \tilde{\G'})$ be one of the dual pairs in $\Pin(\E, \b)$ we found in Section \ref{SectionDualPairs}. Then the spinorial representation sets up a Howe correspondance for $(\tilde{\G},\tilde{\G}')$.
\end{restatable}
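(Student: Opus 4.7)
The plan is to reduce the claim to a double-commutant identity in $\End(\S)$, to be verified case by case along the classification of covers obtained in Section \ref{SectionDualPairs}. Since $\S$ is finite dimensional and both $\tilde{\G}$ and $\tilde{\G'}$ are finite covers of real reductive groups, the restrictions of $\Pi$ to each factor are completely reducible, and so is the joint restriction to the product $\tilde{\G}\tilde{\G'}$. Let $\mathcal{A}$ and $\mathcal{A}'$ denote the associative subalgebras of $\End(\S)$ generated by $\Pi(\tilde{\G})$ and $\Pi(\tilde{\G'})$ respectively; they commute by Theorem \ref{prop:dualift}. The standard double-commutant theorem for semisimple algebras then implies that, once one knows that $\mathcal{A}$ and $\mathcal{A}'$ are each other's commutants, the representation decomposes in the Howe form
\[
\S \cong \bigoplus_{\lambda} \lambda \otimes \theta(\lambda),
\]
with pairwise non-isomorphic $\tilde{\G}$-irreducibles $\lambda$ and pairwise non-isomorphic $\tilde{\G'}$-irreducibles $\theta(\lambda)$.

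The core of the argument is therefore to establish that $\mathcal{A}'$ is the full commutant of $\mathcal{A}$. The tool is the identification of $\End(\S)$ with (a quotient of) the complex Clifford algebra $\Cliff(\E_{\mathbb{C}}, \b_{\mathbb{C}})$, together with the tensor structure $\E \cong \V \otimes \W$ (over the appropriate division algebra $\mathbb{R}$, $\mathbb{C}$ or $\mathbb{H}$) coming from the dual pair $(\G, \G')$ listed in Corollary \ref{ClassificationDP}. A polarization $\E_{\mathbb{C}} = \L \oplus \L^{\ast}$ compatible with this decomposition realises $\S$ as the exterior algebra $\Lambda^{\bullet}\L$, and expresses the Lie-algebra actions of $\mathfrak{g}$ and $\mathfrak{g}'$ as quadratic expressions in the associated creation and annihilation operators. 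Describing the commutant of $\mathcal{A}$ becomes then an invariant-theoretic question, for which the First Fundamental Theorems for $\GL$, $\O$ and $\Sp$ provide exactly the expected generators---the natural contractions built from the dual-pair pairing on $\V \otimes \W$---so that the commutant can be recognised as the algebra generated by $\Pi(\tilde{\G'})$.

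The plan is then to work through each family from Section \ref{SectionDualPairs}: complex orthogonal pairs $(\O(n, \mathbb{C}), \O(m, \mathbb{C}))$, unitary pairs $(\U(p_1, q_1), \U(p_2, q_2))$, quaternionic pairs $(\Sp(p, q), \O^{\ast}(2n))$, and the general-linear-type pairs. In each case one can invoke Weyl's unitary trick to pass to a compact real form, where Peter-Weyl and classical Schur-Weyl-type identities apply directly, and then transfer the resulting commutant identity back to the original real form. Specialising the same computation, modulo cover bookkeeping, to the real-orthogonal case excluded from the statement should recover Slupinski's result \cite{SLU}, giving a useful sanity check.

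The main obstacle I anticipate is the unitary case where the cover is the non-split extension $\Lambda(p_1, q_1)$ of Proposition \ref{theo:classlift}. There $\tilde{\G}$ is not isomorphic to $\G \times \{\pm 1\}$, so genuine and non-genuine spin representations cannot be separated factorwise: one must track the value of $\Pi(-1)$ on each $\tilde{\G}$-isotypic component and verify that it is correlated with the corresponding central character on the $\tilde{\G'}$ side. I would address this by combining the explicit presentation of $\Lambda(p_1, q_1)$ from Section \ref{SectionDualPairs} with a parity computation on the exterior-algebra model of $\S$, ensuring that only compatible parities occur in the decomposition. Once this refined double-commutant identity is in hand, the irreducibility of each $\theta(\lambda)$ and the Howe bijection follow automatically from the first paragraph.
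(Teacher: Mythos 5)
Your proposal follows essentially the same route as the paper: reduce to the double-commutant identity $\Comm_{\End(\S)}\langle\tilde{\G}\rangle = \langle\tilde{\G}'\rangle$, transfer this to the Clifford algebra via the isomorphism $\tilde{\gamma}$ and the Chevalley quantization map, and compute the commutant via Howe's invariant theory (Theorems \ref{HoweGL}, \ref{HoweO}, \ref{HoweSp}) for the classical groups, after complexifying the real forms. One point worth flagging: the parity bookkeeping you anticipate in the unitary case is unnecessary, because $\Pi(-1)$ is scalar by Schur's lemma, so the conjugation action of $\Pin(\E, \b)$ on $\End(\S)$ factors through $\O(\E, \b)$ (Remark \ref{RemarkInvariants}), and hence $\End(\S)^{\tilde{\G}}$ coincides with the $\G$-invariants of $\Lambda(\E_{\mathbb{C}})$ under the exterior-power action regardless of whether the extension $\tilde{\G}$ splits, so the $\Lambda(p_1,q_1)$ case poses no extra difficulty.
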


The main point is to use the double commutant theorem \cite[Section~4.1]{GOD}. It translates the question of the Howe correspondence to a computation of invariants. In this computation, we complexify the groups to use classical results of Howe on invariants of complex groups (\cite{HOW2}).

\bigskip

\textbf{Acknowledgments} \thinspace \thinspace Cl\'ement Gu\'erin was supported by the FNR grant OPEN/16/11405402 and Allan Merino was supported by the grant R-146-000-261-114 (Reductive dual pair correspondences and supercuspidal representations).

\bigskip

\section{Preliminaries}

\label{SectionPreliminaries}

Let $\E$ be a vector space over $\mathbb{K} = \mathbb{R}$ or $\mathbb{C}$ endowed with a non-degenerate, symmetric, bilinear form $\b$, and denote by $\O(\E, \b)$ the corresponding group of isometries
\begin{equation*}
\O(\E, \b) = \left\{g \in \GL(\E), \b(g(u), g(v)) = \b(u, v) \thinspace (\forall u, v \in \E)\right\}.
\end{equation*}

A pair of subgroups $(\G, \G')$ in $\O(\E, \b)$ is called a \underline{dual pair} if $Z_{\O(\E, \b)}(\G')=\G$ and $Z_{\O(\E, \b)}(\G)=\G'$. A dual pair $(\G, \G')$ is called \underline{reductive} if $\G$ and $\G'$ act reductively on $\E$ as subgroups of $\GL(\E)$. 

If we can find an orthogonal decomposition of $\E = \E_{1} \oplus \E_{2}$ where both subspaces are $\G.\G'$-invariant, the restrictions $\G_{|_{\E_{i}}}$ and $\G'_{|_{\E_{i}}}$ are well-defined and $(\G_{|_{\E_{i}}}, \G'_{|_{\E_{i}}})$ is a dual pair in $\O(\E_{i}, \b_{i})$, $i = 1, 2$, where $\b_{i} = \b_{|_{\E_{i} \times \E_{i}}}$. In this case, we say that $(\G, \G')$ is the direct sum of $(\G_{|_{\E_{1}}}, \G'_{|_{\E_{1}}})$ and $(\G_{|_{\E_{2}}}, \G'_{|_{\E_{2}}})$. If no such decomposition exist, the dual pair is said \underline{irreducible}.

As proved by Howe in \cite[Section~6]{HOW1}, every dual reductive dual pair $(\G, \G')$ in $\O(\E, \b)$ is a direct sum of irreducible dual pairs. Indeed, if we consider the decomposition of $\E$ as a direct sum of $\G$-isotypic components
\begin{equation*}
\E = \bigoplus\limits_{i=1}^{n} \E_{i},
\end{equation*}
one can prove that the restriction of $\b$ to $\E_{i}$ is either zero or non-degenerate. In particular, if $\E_{i}$ is isotropic, then there exists a unique $\E_{j}$ such that the restriction of $\b$ to $\E_{i} \times \E_{j}$ is non-degenerate. Then, we have
\begin{equation*}
\E = \bigoplus\limits_{k=1}^{m} \widetilde{\E(k)}
\end{equation*}
where $\widetilde{\E(k)}$ is either a non-degenerate $\G$-isotypic component or a direct sum of two non-degenerate $\G$-isotypic components such that the pairing is non-degenerate. The group $\G'$ acts on $\widetilde{\E(k)}$, and the sum is orthogonal. In particular, $(\G_{|_{\widetilde{\E_{k}}}}, \G'_{|_{\widetilde{\E_{k}}}})$ is an irreducible reductive dual pair in $\O(\widetilde{\E_{k}}, \tilde{\b_{k}})$.

According to the previous paragraph, we have two differents kinds of irreducible dual pairs: the dual pair is said of \underline{type I} if the action of $\G.\G'$ on $\E$ is irreducible, and of \underline{type II} if $\E = \E_{1} \oplus \E_{2}$ is a direct sum of maximal isotropic subspaces invariant by $\G.\G'$. A classification of such dual pairs can be found in \cite{HOW1} (one can check also \cite[Lecture~5]{LI} and \cite{SCH}).

\begin{prop}[\cite{HOW1}]

Let $(\G, \G')$ be an irreducible reductive dual pair in $\O(\E, \b)$.
\begin{enumerate}
\item If the dual pair is of type I, there exists a division algebra $\mathbb{D}$ over $\mathbb{K}$ with involution $\iota$, vector spaces $\E_{1}$ and $\E_{2}$ over $\mathbb{D}$, sesquilinear forms $\b_{1}$ and $\b_{2}$ on $\E_{1}$ and $\E_{2}$ respectively which are both hermitian or skew-hermitian, such that $\E = \E_{1} \otimes_{\mathbb{D}} \E_{2}$ and $\G \approx \G(\E_{1}, \b_{1})$, $\G' \approx \G(\E_{2}, \b_{2})$.
\item Otherwise, we have $\E = \E_{1} \oplus \E^{*}_{1}$, and there exists a division algebra $\mathbb{D}$ over $\mathbb{K}$ and two vector spaces $\F_{1}$ and $\F_{2}$ over $\mathbb{D}$ such that $\E_{1} = \F_{1} \otimes_{\mathbb{D}} \F_{2}$ and $\G \approx \GL(\F_{1}, \mathbb{D})$, $\G' \approx \GL(\F_{2}, \mathbb{D})$.
\end{enumerate}
\label{PropositionDP}

\end{prop}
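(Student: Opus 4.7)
The plan is to prove Theorem~\ref{theo:dual} using the double commutant theorem of \cite[Section~4.1]{GOD}, as indicated in the introduction. Let $\mathcal{A}$ (resp.\ $\mathcal{A}'$) denote the associative subalgebra of $\End(\S)$ generated by $\Pi(\tilde{\G})$ (resp.\ by $\Pi(\tilde{\G}')$). Since $\S$ is finite-dimensional and both groups act completely reducibly, $\mathcal{A}$ and $\mathcal{A}'$ are semisimple; the fact that $\tilde{\G}$ and $\tilde{\G}'$ centralize each other in $\Pin(\E,\b)$ gives the easy inclusion of $\mathcal{A}'$ in the commutant of $\mathcal{A}$. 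The double commutant theorem reduces Howe duality to the reverse inclusion $\End_{\tilde{\G}}(\S)\subseteq\mathcal{A}'$: once that holds, $\mathcal{A}$ and $\mathcal{A}'$ are mutual commutants and the isotypic decomposition of $\S$ under $\tilde{\G}\cdot\tilde{\G}'$ takes the form $\bigoplus_{\lambda}\lambda\otimes\theta(\lambda)$ with $\theta(\lambda)$ irreducible and $\lambda\mapsto\theta(\lambda)$ injective, which is exactly the Howe correspondence.

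To compute $\End_{\tilde{\G}}(\S)$, I would complexify. Since $\Pi$ extends to the Clifford algebra $\Cliff(\E,\b)$, the complex spin module $\S_{\mathbb{C}}$ is a module over $\Cliff(\E_{\mathbb{C}},\b_{\mathbb{C}})$, and the Zariski closure $\tilde{\G}_{\mathbb{C}}$ of $\tilde{\G}$ inside $\Pin(\E_{\mathbb{C}},\b_{\mathbb{C}})$ has the same invariants on $\S$ as $\tilde{\G}$. Using the explicit descriptions from Propositions~\ref{prop:dualift} and~\ref{theo:classlift}, in every case $\tilde{\G}_{\mathbb{C}}$ is a double cover of a classical complex group and $(\tilde{\G}_{\mathbb{C}},\tilde{\G}'_{\mathbb{C}})$ is again a dual pair. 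I would then choose a complex polarization adapted to the pair: for type~I one exploits $\E=\E_{1}\otimes_{\mathbb{D}}\E_{2}$, for type~II the decomposition $\E=\E_{1}\oplus\E_{1}^{*}$ with $\E_{1}=\F_{1}\otimes_{\mathbb{D}}\F_{2}$; in each case a Lagrangian $\U\subset\E_{\mathbb{C}}$ respecting the factorization yields a concrete model $\S_{\mathbb{C}}\cong\Lambda^{\bullet}\U$ on which $\G_{\mathbb{C}}\cdot\G'_{\mathbb{C}}$ acts by polynomial operators, up to a character tracked by the cover.

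With this realization, $\End(\S_{\mathbb{C}})\cong\S_{\mathbb{C}}\otimes\S_{\mathbb{C}}^{*}$ becomes a polynomial module for $\G_{\mathbb{C}}$ assembled from copies of its defining representation, and Howe's First Fundamental Theorem of invariant theory \cite{HOW2} identifies the $\G_{\mathbb{C}}$-invariants as the subalgebra generated by the basic quadratic contractions. I would match these generators with the images of quadratic Clifford elements spanning $\Lie(\tilde{\G}'_{\mathbb{C}})$, concluding $\End_{\tilde{\G}_{\mathbb{C}}}(\S_{\mathbb{C}})=\mathcal{A}(\Pi(\tilde{\G}'_{\mathbb{C}}))$, and then descend back to the real form. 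A case-by-case verification along the list produced in Section~\ref{SectionDualPairs} finishes the argument.

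The hardest step, I expect, is keeping track of the spinorial subtleties stemming from the non-trivial covers. When $\tilde{\G}$ is a non-split cover such as $\Lambda(p_{1},q_{1})$ of Proposition~\ref{theo:classlift}, the representation of $\tilde{\G}$ on $\S$ differs from an honest polynomial representation of $\G$ by a square root of a determinant character, and this twist must be inserted correctly before Howe's theorem applies. A secondary difficulty is the $\Pin$-versus-$\Spin$ dichotomy: in the odd-orthogonal case $\S$ splits into half-spin modules under $\Spin$, and the invariants must be matched with both the even and odd Clifford contributions to $\mathcal{A}'$. Finally, the descent from $\mathbb{C}$ to $\mathbb{R}$ requires showing that the quadratic Clifford elements produced by Howe's theorem lie in the real Lie algebra of $\tilde{\G}'$ itself, which is a real-form argument to be carried out using the explicit structure of each pair from Section~\ref{SectionDualPairs}.
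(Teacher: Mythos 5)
Your proposal does not address the statement it is supposed to prove. The statement in question is Proposition \ref{PropositionDP}, the structural classification of irreducible reductive dual pairs in $\O(\E,\b)$: in type I the pair arises from a tensor product $\E=\E_{1}\otimes_{\mathbb{D}}\E_{2}$ of ($\iota$-)hermitian or skew-hermitian spaces over a division algebra $\mathbb{D}$ with $\G\approx\G(\E_{1},\b_{1})$ and $\G'\approx\G(\E_{2},\b_{2})$, and in type II from $\E=\E_{1}\oplus\E_{1}^{*}$ with $\E_{1}=\F_{1}\otimes_{\mathbb{D}}\F_{2}$ and $\G\approx\GL(\F_{1},\mathbb{D})$, $\G'\approx\GL(\F_{2},\mathbb{D})$. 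What you have written instead is a strategy for Theorem \ref{theo:dual} (the Howe correspondence for the spinorial representation via the double commutant theorem, complexification, and Howe's invariant-theoretic results). None of that bears on the classification: nothing in your text produces the division algebra $\mathbb{D}$, the tensor decomposition of $\E$, or the identification of $\G$ and $\G'$ with the relevant isometry or general linear groups. Indeed, your argument \emph{presupposes} Proposition \ref{PropositionDP}, since you invoke the decompositions $\E=\E_{1}\otimes_{\mathbb{D}}\E_{2}$ and $\E=\E_{1}\oplus\E_{1}^{*}$ to choose your polarizations, so it cannot serve as a proof of it.

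For the record, the paper does not reprove this proposition either: it is quoted from Howe's classification (Propositions 6.2 and 6.3 of \cite{HOW1}; see also \cite{SCH}). A genuine proof would run along the following lines: since the pair is irreducible and reductive, the $\G$-isotypic decomposition of $\E$ has a single block, $\mathbb{D}=\End_{\G\cdot\G'}(\E)$ (or the endomorphism algebra of the relevant irreducible $\G$-submodule) is a division algebra by Schur's lemma, and the double commutant theorem over $\mathbb{D}$ yields the tensor factorization; the form $\b$ then either restricts nondegenerately to the single block (type I, producing the sesquilinear forms $\b_{1},\b_{2}$ whose product is $\b$) or pairs two totally isotropic blocks (type II, producing $\GL$ factors). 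If you intend to supply a proof, that is the argument to develop.
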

\begin{proof}

See \cite[Proposition~6.2~and~ 6.3]{HOW1}.

\end{proof}
As a corollary, we get the following dual pairs.

\begin{coro}
\label{ClassificationDP}
If $\mathbb{K} = \mathbb{C}$, we have:
\begin{enumerate}
\item $(\O(n, \mathbb{C}), \O(m, \mathbb{C}) \subseteq \O(nm, \mathbb{C})$, 
\item $(\Sp(2n, \mathbb{C}), \Sp(2m, \mathbb{C}) \subseteq \O(4nm, \mathbb{C})$,
\item $(\GL(n, \mathbb{C}), \GL(m, \mathbb{C}) \subseteq \O(2nm, \mathbb{C})$
\end{enumerate}
If $\mathbb{K} = \mathbb{R}$, we obtain:
\begin{enumerate}
\item $(\O(p_{1}, q_{1}, \mathbb{R}), \O(p_{2}, q_{2}, \mathbb{R})) \subseteq \O(p, q, \mathbb{R}), p = p_{1}p_{2}+q_{1}q_{2}, q = p_{1}q_{2} +p_{2}q_{1}$,
\item $(\U(p_{1}, q_{1}, \mathbb{C}), \U(p_{2}, q_{2}, \mathbb{C})) \subseteq \O(p, q, \mathbb{R}), p = 2(p_{1}p_{2}+q_{1}q_{2}), q = 2(p_{1}q_{2} +p_{2}q_{1})$,
\item $(\Sp(n_{1}, \mathbb{R}), \Sp(n_{2}, \mathbb{R})) \subseteq \O(p, p, \mathbb{R})$, $p  = 2n_{1}n_{2}$,
\item $(\O(n_{1}, \mathbb{C}), \O(n_{2}, \mathbb{C})) \subseteq \O(p, p, \mathbb{R})$, $p  = n_{1}n_{2}$, $n_{1}, n_{2} \neq 1$,
\item $(\Sp(n_{1}, \mathbb{C}), \Sp(n_{2}, \mathbb{C})) \subseteq \O(p, p, \mathbb{R})$, $p  = 4n_{1}n_{2}$,
\item $(\Sp(p_{1}, q_{1}, \mathbb{H}), \Sp(p_{2}, q_{2}, \mathbb{H})) \subseteq \O(p, q, \mathbb{R}), p = 4(p_{1}p_{2} + q_{1}q_{2}), q = 4(p_{1}q_{2} + p_{2}q_{1})$,
\item $(\O^{*}(n_{1}, \mathbb{H}), \O^{*}(n_{2}, \mathbb{H})) \subseteq \O(p, p, \mathbb{R}),  p = 2n_{1}n_{2}, n_{1}, n_{2} \neq 1$,
\item $(\GL(n_{1}, \mathbb{R}), \GL(n_{2}, \mathbb{R})) \subseteq \O(n_{1}n_{2}, n_{1}n_{2}, \mathbb{R})$,
\item $(\GL(n_{1}, \mathbb{C}), \GL(n_{2}, \mathbb{C})) \subseteq \O(2n_{1}n_{2}, 2n_{1}n_{2}, \mathbb{R})$,
\item $(\GL(n_{1}, \mathbb{H}), \GL(n_{2}, \mathbb{H})) \subseteq \O(4n_{1}n_{2}, 4n_{1}n_{2}, \mathbb{R})$.
\end{enumerate}

\end{coro}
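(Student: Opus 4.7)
The corollary is an enumeration of the output of Proposition \ref{PropositionDP} as the data $(\mathbb{D}, \iota, \b_1, \b_2)$ vary, together with the identification of the resulting form on $\E = \E_1 \otimes_{\mathbb{D}} \E_2$ (type I) or $\E = \E_1 \oplus \E_1^*$ (type II) as a $\mathbb{K}$-valued symmetric non-degenerate bilinear form. The plan is to go through the admissible pairs $(\mathbb{D}, \iota)$ over $\mathbb{K}$, identify the isometry group $\G(\E_i, \b_i)$ in each case, verify that the tensor form descends to an orthogonal form on $\E$, and finally compute the signature of that descended form.

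For $\mathbb{K} = \mathbb{C}$ the only division algebra over $\mathbb{K}$ is $\mathbb{D} = \mathbb{C}$ with the trivial involution; the choice of hermitian (symmetric) versus skew-hermitian (alternating) sign for $\b_i$ produces type I items (1) and (2) of the complex list, while the type II case produces item (3). For $\mathbb{K} = \mathbb{R}$ the division algebras are $\mathbb{R}, \mathbb{C}, \mathbb{H}$, equipped respectively with the identity on $\mathbb{R}$, the identity or complex conjugation on $\mathbb{C}$, and quaternionic conjugation on $\mathbb{H}$. Combined with the $\varepsilon$-hermitian dichotomy, this yields exactly the isometry groups $\O(p,q,\mathbb{R})$, $\Sp(2n,\mathbb{R})$, $\O(n,\mathbb{C})$, $\Sp(n,\mathbb{C})$, $\U(p,q,\mathbb{C})$, $\Sp(p,q,\mathbb{H})$ and $\O^*(n,\mathbb{H})$, matching the seven type I items of the real list; items (8)--(10) of the real list come in turn from type II with $\mathbb{D} = \mathbb{R}, \mathbb{C}, \mathbb{H}$.

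The key compatibility check is that $\b_1 \otimes \b_2$ descends to a well-defined $\mathbb{K}$-bilinear form on $\E_1 \otimes_{\mathbb{D}} \E_2$ and is symmetric. A direct computation shows that this descended form has symmetry sign $\varepsilon_1\varepsilon_2$, where $\varepsilon_i \in \{\pm 1\}$ is the hermitian sign of $\b_i$, so that symmetry forces $\varepsilon_1 = \varepsilon_2$; this is why each row of the list couples isometry groups of matching flavour and rules out the mixed case, which would produce an alternating (symplectic) total form, i.e.\ a dual pair living in a metaplectic rather than a pin setting. The signature is then read off by diagonalising $\b_1$ and $\b_2$ and tabulating, for each pair of diagonal entries, its contribution to the signature of $\b_1 \otimes \b_2$ after restriction of scalars from $\mathbb{D}$ down to $\mathbb{K}$: a complex coordinate contributes two real coordinates and a quaternionic coordinate contributes four, with signs dictated by $\iota$ and the chosen hermitian sign. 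For instance, in case (2) of the real list, each pair of diagonal basis vectors with signs $\eta_i, \eta_j \in \{\pm 1\}$ contributes a complex line in $\E_1 \otimes_{\mathbb{C}} \E_2$ whose real part has signature $(1,1)$ if $\eta_i \eta_j = -1$ and $(2,0)$ or $(0,2)$ otherwise; summing gives $p = 2(p_1p_2 + q_1q_2)$ and $q = 2(p_1q_2 + q_1p_2)$.

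The main technical obstacle is the signature computation in the quaternionic cases (items (6) and (7) of the real list), where the choice of hermitian versus skew-hermitian sign relative to quaternionic conjugation is what separates $\Sp(p,q,\mathbb{H})$ from $\O^*(n,\mathbb{H})$, and where restriction of scalars multiplies dimensions by four rather than two. The small exclusions $n_1, n_2 \neq 1$ in items (4) and (7) are handled separately: $\O(1,\mathbb{C})$ and $\O^*(1,\mathbb{H})$ are abelian of order two, and the corresponding pair degenerates, either becoming reducible or collapsing onto an entry already listed elsewhere, so no information is lost by excluding these edge cases.
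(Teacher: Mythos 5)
The paper itself offers no proof of this corollary: it is presented as an immediate consequence of Proposition \ref{PropositionDP}, whose proof is deferred to \cite[Propositions~6.2~and~6.3]{HOW1} (see also \cite{SCH}). Your strategy --- enumerate the admissible $(\mathbb{D},\iota)$ over $\mathbb{K}$, observe that the descended form on $\E_1\otimes_{\mathbb{D}}\E_2$ has symmetry sign $\varepsilon_1\varepsilon_2$ so that only matching signs land in an orthogonal (rather than symplectic) group, and compute signatures by restriction of scalars --- is the natural and essentially the only way to turn the proposition into this list, so the architecture of your argument is sound and consistent with what the paper implicitly relies on.

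However, both of the concrete verifications you exhibit contain errors. In the unitary case, the form restricted to the real plane underlying the complex line spanned by $e_i\otimes f_j$ is $\Re(\eta_i\eta_j\, z\bar w)$, which is \emph{definite} of sign $\eta_i\eta_j$: it contributes $(2,0)$ when $\eta_i\eta_j=+1$ and $(0,2)$ when $\eta_i\eta_j=-1$, and can never contribute $(1,1)$. Your stated rule (``$(1,1)$ if $\eta_i\eta_j=-1$'') does not sum to $p=2(p_1p_2+q_1q_2)$, $q=2(p_1q_2+q_1p_2)$; you have conflated the sesquilinear case with the $\mathbb{C}$-bilinear one, where $\Re(zw)=x_1x_2-y_1y_2$ indeed has signature $(1,1)$ --- which is precisely why the pairs built over $\mathbb{C}$ with trivial involution (items (3), (4), (5), (9) of the real list) land in split forms $\O(p,p,\R)$. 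Second, $\O^{*}(1,\mathbb{H})$ is not of order two; it is isomorphic to $\U(1)$. The exclusions $n_1,n_2\neq 1$ in items (4) and (7) are needed because the mutual-centralizer condition fails when one member is too small --- e.g.\ $Z_{\O(m,m,\R)}(\O(1,\mathbb{C}))=Z_{\O(m,m,\R)}(\{\pm 1\})=\O(m,m,\R)\neq\O(m,\mathbb{C})$ --- or because the pair reduces to another entry of the list, not because the small group is ``of order two.'' Since the entire content of the corollary is this bookkeeping, these two computations should be corrected.
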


In the rest of the paper, we consider dual pairs in the Pin-group: it's a non-trivial double cover of the orthogonal group $\O(\E, \b)$. This group is a subset of the invertible elements in the corresponding Clifford algebra of $(\E, \b)$. We recall now its construction.

We denote by $\T(\E)$ the tensor algebra of $\E$ by $\mathscr{T}$ the ideal generated by the elements of the form $x \otimes y + y \otimes x -\b(x, y).1, x, y \in \E$. The Clifford algebra, denoted by $\Cliff(\E, \b)$, is defined by
\begin{equation*}
\Cliff(\E, \b) = \T(\E) / \mathscr{T}.
\end{equation*}
The Clifford algebra inherits a natural structure of $\mathbb{Z}_{2}$-graded algebra coming from the gradation on $\T(\E)$. 

The natural composition $\E \to \T(\E) \to \Cliff(\E, \b)$ is an injection and we will identify the element $e \in \E$ with its image in $\Cliff(\E, \b)$. If we fix an orthonormal basis $\{e_{1}, \ldots, e_{n}\}$ of $\E$, the elements $e_{i_{1}}e_{i_{2}} \ldots e_{i_{k}}, 1 \leq i_{1} < \ldots < i_{k} \leq n$ form a basis of the algebra $\Cliff(\E, \b)$. For more details, one can check \cite[Chapter~5]{VAR} or \cite[Chapter~6.1]{GOD}.

On $\Cliff(\E, \b)$, we define the following involution $\alpha$:
\begin{equation*}
\alpha(e_{1} \ldots e_{k}) = (-1)^{k} e_{1} \ldots e_{k}
\end{equation*}
and the antiautomorphism $\tau$
\begin{equation*}
\tau(e_{1} \ldots e_{k}) = e_{k} \ldots e_{1}
\end{equation*}
The $\Pin$-group $\Pin(\E, \b)$ is defined by:
\begin{equation}
\Pin(\E, \b) = \left\{x \in \Cliff(\E, \b): x.\tau(\alpha(x)) = 1 \text{ and } \alpha(x)\gamma(\E)\tau(\alpha(x)) = \gamma(\E)\right\}.
\label{DefinitionPinGroup}
\end{equation}

The map $\pi: \Pin(\E, \b) \to \O(\E, \b)$ given by:
\begin{equation*}
\pi(x)(e) = \alpha(x)e\tau(\alpha(x)) \qquad (x \in \Pin(\E, \b), e \in \E),
\end{equation*}
is well-defined and surjective with kernel $\Ker(\pi) = \{\pm 1\}$. 

\begin{rema}

Let $(\E, \b)$ be a complex orthogonal space, and let $c$ be a $\mathbb{C}$-anti-linear involution such that the restriction of $\b$ to $\Fix(\E, c)$ is real valued. We denote by $\E_{0} = \Fix(\E, c)$ and $\b_{0}$ the restriction of $\b$ to $\E_{0}$ and $\tilde{c}$ the extension of $c$ to $\Cliff(\E, \b)$. In particular, we have $\Cliff(\E_{0}, \b_{0}) = \Fix(\Cliff(\E, \b), \tilde{c})$ and for all $e \in \E$, we have $\tilde{c}(e) = c(e)$.

Obviously, we have $\tilde{c}(\Pin(\E, \b)) = \Pin(\E, \b)$ and $\Pin(\E_{0}, \b_{0}) = \Fix(\Pin(\E, \b), \tilde{c})$.

\end{rema}

We recall the construction of the spinorial representation. From now on, we assume that $\E$ is an even dimensional vector space over $\mathbb{K}$.

\begin{defn}

Let $\S$ be a complex vector space and let $\gamma: \E \to \End(\S)$ be a linear map. Then, $(\S, \gamma)$ is a space of spinors for $(\E, \b)$ if 
\begin{enumerate}
\item $\gamma(x)\gamma(y) + \gamma(y)\gamma(x) = \b(x, y)$ for all $x, y \in \E$,
\item The only subspaces of $\S$ that are invariant under $\gamma(\E)$ are $0$ and $\S$.
\end{enumerate}

\label{DefinitionGamma}
\end{defn}

According to \cite[Section~6.1]{GOD}, we have by extension an application $\tilde{\gamma}: \Cliff(\E, \b) \to \End(\S)$ which is an isomorphism of algebra. In particular, the corresponding representation on $\S$ is irreducible. Moreover, up to conjugation, this space is unique. By inclusion of $\Pin(\E, \b) \subseteq \Cliff(\E, \b)$, we have an irreducible representation of $\Pin(\E, \b)$ in the space $\S$ and we denote by $(\Pi, \S)$ this representation.

\begin{defn}

If $\E$ is real, we denote by $\E_{\mathbb{C}}$ its complexification and $\b_{\mathbb{C}}$ the induced non-degenerate, symmetric, bilinear form on $\E_{\mathbb{C}}$. We have a natural inclusion $\Pin(\E, \b) \subseteq \Pin(\E_{\mathbb{C}}, \b_{\mathbb{C}})$ and let $(\Pi, \S)$ be the corresponding spinorial representation of $\Pin(\E_{\mathbb{C}}, \b_{\mathbb{C}})$. Then, the restriction of $\Pi$ to $\Pin(\E, \b)$ is still irreducible and that's how we define the spinorial representation of $\Pin(\E, \b)$.

\end{defn}

We finish this section by introducing some notations. If $\E$ is a real, we denote by $\SO(\E, \b)$ the special orthogonal group, i.e. the subgroups of $\O(\E, \b)$ consisting of elements of determinant $1$. Let $(p, q)$ be the signature of $\b$ and $\SO_{0}(\E, \b)$ be the connected component of $\SO(\E, \b)$ containing $\Id_{\E}$. We denote by $\Spin(\E, \b)$ the preimage of $\SO_{0}(\E, \b)$ in $\Pin(\E, b)$. 

If $(\E, \b)$ is complex, then, $\SO(\E, \b)$ is connected and we denote by $\Spin(\E, \b)$ its preimage in $\Pin(\E, \b)$.

In the next section, we study the dual pairs in the $\Pin$-group. More particularly, starting with an irreducible dual pair $(\G, \G')$ in $\O(\E, \b)$, we determine when the pull-back of $(\G, \G')$ is a dual pair in $\Pin(\E, \b)$.

\section{Dual pairs in the $\Pin$-group}

\label{SectionDualPairs}

In this section, we first prove that all pull-backs of dual pairs in $\O(\E, \b)$ are dual pairs in $\Pin(\E, \b)$. Then, we identify the isomorphism class of such pairs. Throughout this section if a real orthogonal group  $\O(\E, \b)$ is given then $\pi$ denotes the natural projection $\Pin(\E, \b)\to \O(\E, \b)$.

\subsection{Pull-back of dual pairs}

We begin our work with two topological lemmas about dual pairs.
 
\begin{lemme}
\label{comdual}

Let $\pi:\tilde{\S}\to \S$ be a discrete covering of connected Lie groups and $(\G,\G')$ be a dual pair in $\S$. Then $(\pi^{-1}(\G),\pi^{-1}(\G'))$ is a dual pair in $\tilde{\S}$ if and only if $\pi^{-1}(\G)$ and $\pi^{-1}(\G')$ commute. 

\end{lemme}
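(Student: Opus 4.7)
The forward implication is immediate from the definition: if $(\pi^{-1}(\G),\pi^{-1}(\G'))$ is a dual pair in $\tilde{\S}$, then each of the two subgroups is by definition the centralizer of the other, so in particular they commute.

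For the converse I would argue purely set-theoretically, without invoking the Lie algebra of $\tilde{\S}$. Assume $\pi^{-1}(\G)$ and $\pi^{-1}(\G')$ commute. The hypothesis already gives one inclusion $\pi^{-1}(\G')\subseteq Z_{\tilde{\S}}(\pi^{-1}(\G))$, so I would focus on the reverse inclusion. Pick $\tilde{g}\in Z_{\tilde{\S}}(\pi^{-1}(\G))$; then for every $\tilde{h}\in\pi^{-1}(\G)$ the identity $\tilde{g}\tilde{h}=\tilde{h}\tilde{g}$ pushes down under the group homomorphism $\pi$ to $\pi(\tilde{g})\pi(\tilde{h})=\pi(\tilde{h})\pi(\tilde{g})$. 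Since $\pi$ is surjective (it is a covering of connected Lie groups) one has $\pi(\pi^{-1}(\G))=\G$, hence $\pi(\tilde{g})\in Z_{\S}(\G)=\G'$ because $(\G,\G')$ is a dual pair in $\S$. This means exactly $\tilde{g}\in\pi^{-1}(\G')$, yielding $Z_{\tilde{\S}}(\pi^{-1}(\G))=\pi^{-1}(\G')$. The symmetric identity $Z_{\tilde{\S}}(\pi^{-1}(\G'))=\pi^{-1}(\G)$ is proved identically by exchanging the roles of $\G$ and $\G'$, so $(\pi^{-1}(\G),\pi^{-1}(\G'))$ is a dual pair in $\tilde{\S}$.

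There is no real obstacle to overcome here: the Lie group and discrete covering assumptions play no essential role, and the only properties of $\pi$ actually used are that it is a surjective group homomorphism. The content of the lemma is the reduction of the dual pair condition on the lift to the single, much more tractable, commutativity condition on the two preimages; this is precisely the criterion exploited in the case-by-case analysis of \S\ref{SectionDualPairs}.
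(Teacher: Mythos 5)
Your proof is correct and follows essentially the same argument as the paper: assume commutativity, take an arbitrary element of $Z_{\tilde{\S}}(\pi^{-1}(\G))$ (the paper starts with $Z_{\tilde{\S}}(\pi^{-1}(\G'))$, a purely cosmetic difference), push it down by $\pi$, use surjectivity and the dual-pair condition in $\S$ to identify its image, and pull back. Your added observation that only surjectivity of $\pi$ as a group homomorphism is genuinely used is accurate and worth noting, though the paper does not make it explicit.
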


\begin{proof}

Assume that $\pi^{-1}(\G)$ and $\pi^{-1}(\G')$ commute. Let $\tilde{s}\in \tilde{\S}$ be an element commuting with $\pi^{-1}(\G')$ then $\pi(\tilde{s})$ commutes with $\G'$ and therefore belongs to $\G$. As a result, $Z_{\tilde{\S}}(\pi^{-1}(\G'))$ is contained in $\pi^{-1}(\G)$. Since the other inclusion is true by assumption,  we have equality. By symmetry we also have $Z_{\tilde{\S}}(\pi^{-1}(\G))=\pi^{-1}(\G')$ and we are done.

\end{proof}

\begin{lemme}

Let $\pi:\tilde{\S}\to \S$ be a discrete covering of connected Lie groups and $(\G,\G')$ be a dual pair in $\S$. If $\G$ and $\G'$ are connected then $(\pi^{-1}(\G),\pi^{-1}(\G'))$ is a dual pair in $\tilde{\S}$.

\label{LemmaConnected}

\end{lemme}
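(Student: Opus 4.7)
By Lemma~\ref{comdual}, it suffices to prove that $\tilde{\G}:=\pi^{-1}(\G)$ and $\tilde{\G}':=\pi^{-1}(\G')$ commute elementwise in $\tilde{\S}$. My plan is to first establish the commutation on the identity components by a standard connectedness argument, and then propagate it to the full preimages using the centrality of $\Ker(\pi)$ in $\tilde{\S}$.

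First I would look at the commutator map
\[
c\colon \tilde{\G}^{\circ}\times \tilde{\G}'^{\circ}\longrightarrow \tilde{\S}, \qquad c(x,y)=xyx^{-1}y^{-1},
\]
where $\tilde{\G}^{\circ}$ and $\tilde{\G}'^{\circ}$ denote the identity components. Because $\G$ and $\G'$ commute in $\S$, the composition $\pi\circ c$ is identically trivial, so $c$ takes values in the discrete subgroup $\Ker(\pi)$. Since $\tilde{\G}^{\circ}\times \tilde{\G}'^{\circ}$ is connected and $c(e,e)=e$, the map $c$ must be constantly equal to $e$, which shows that $\tilde{\G}^{\circ}$ and $\tilde{\G}'^{\circ}$ commute.

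Next I would use the connectedness of $\G$ to obtain the decomposition $\tilde{\G}=\Ker(\pi)\cdot \tilde{\G}^{\circ}$ (and similarly for $\tilde{\G}'$). The restriction $\pi\vert_{\tilde{\G}^{\circ}}$ is an open group homomorphism, so its image is an open subgroup of the connected group $\G$, hence equals $\G$. Thus for any $\tilde{g}\in\tilde{\G}$ there is $\tilde{h}\in\tilde{\G}^{\circ}$ with $\pi(\tilde{g})=\pi(\tilde{h})$, giving $\tilde{g}\tilde{h}^{-1}\in\Ker(\pi)$ as required.

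Finally, since $\Ker(\pi)$ is a discrete normal subgroup of the connected Lie group $\tilde{\S}$, it lies in the center of $\tilde{\S}$. Any $\tilde{g}\in\tilde{\G}$ and $\tilde{g}'\in\tilde{\G}'$ can therefore be written $\tilde{g}=k\tilde{h}$ and $\tilde{g}'=k'\tilde{h}'$ with $k,k'\in\Ker(\pi)$ central and $\tilde{h}\in\tilde{\G}^{\circ}$, $\tilde{h}'\in\tilde{\G}'^{\circ}$, and the previous step shows $\tilde{h}$ and $\tilde{h}'$ commute, so $\tilde{g}$ and $\tilde{g}'$ commute. Combined with Lemma~\ref{comdual}, this proves the statement. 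There is no serious obstacle here; the only delicate point is the passage from $\tilde{\G}^{\circ}$ to $\tilde{\G}$, which is precisely where the connectedness hypothesis on $\G$ and $\G'$ is consumed.
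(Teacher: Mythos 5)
Your proof is correct and rests on the same central idea as the paper's: reduce to elementwise commutation via Lemma~\ref{comdual}, then use that a continuous map from a connected space into a discrete group is constant. The difference is in the bookkeeping. The paper chooses a set-theoretic section of $\pi$ and defines the commutator map $\psi$ directly on $\G\times\G'$, observing that $\psi$ is well defined (the lift ambiguity lies in the central kernel $\Ker(\pi)$) and continuous because $\pi$ is a local homeomorphism; connectedness of $\G\times\G'$ then gives $\psi\equiv 1$ in one stroke. You instead first prove commutation on the identity components $\tilde{\G}^{\circ}\times\tilde{\G}'^{\circ}$, where continuity of the commutator map is automatic, and then propagate to the full preimages using the decomposition $\tilde{\G}=\Ker(\pi)\cdot\tilde{\G}^{\circ}$ (which requires surjectivity of $\pi\vert_{\tilde{\G}^{\circ}}$ onto $\G$) together with centrality of $\Ker(\pi)$. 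Both are sound; the paper's formulation is shorter because the well-definedness remark about $\psi$ absorbs your last two steps at once, while your version is a bit more hands-on and arguably makes more explicit where the connectedness of $\G$ and $\G'$ is used, namely both to kill the commutator on $\tilde{\G}^{\circ}\times\tilde{\G}'^{\circ}$ and to get $\pi(\tilde{\G}^{\circ})=\G$.
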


\begin{proof}

Because of the preceding lemma, it suffices to show that any element of $\pi^{-1}(\G)$ commutes with any element of $\pi^{-1}(\G')$. Chose an arbitrary set-theoretic section $\S\to \tilde{\S}$ sending an element $s$ to one of its lift $\tilde{s}$ in $\tilde{\S}$. We define a function :
\begin{equation*}
\psi:\left|\begin{array}{rcl}\G\times \G'& \to&Z(\tilde{\S})\\ (g,g')&\mapsto& \tilde{g}\tilde{g'}\tilde{g}^{-1}\tilde{g'}^{-1}\end{array}\right.
\end{equation*}
Since $\tilde{\S}\to \S$ is a local homeomorphism and $\psi$ does not depend on the chosen lift, $\psi$ is continuous. The group $\G\times \G'$ being connected and $Z(\tilde{\S})$ being discrete, it follows that $\psi$ is constant. Since $\psi([1_{\G},1_{\G'}])=1_{\tilde{\S}}$, it follows that $\psi=1_{\tilde{\S}}$ which implies that $\pi^{-1}(\G)$ and $\pi^{-1}(\G')$ commute. Whence the result.

\end{proof}

Now we can prove Theorem \ref{prop:dualift} that we recall here 

\dualift*

\begin{proof}

Because of the preceding lemma about connectedness, the lift of $(\G, \G')$ in  $\Pin(\E, \b)$ is a dual pair when $(\G, \G')$ is (2), (3), (5), (6) or (7) in Type I or (9), (10) in Type II (see Section \ref{SectionPreliminaries}, Corollary \ref{ClassificationDP}). Thus we are left with two kinds, the complex orthogonal dual pairs (type I) and the real linear ones (type II). In this proof $\pi$ denotes the projection of $\Pin(\E, \b)$ onto $\O(\E, \b)$. 

\begin{itemize}
\item The $(\O(n,\mathbb{C}), \O(m,\mathbb{C}))$-case ($n,m\geq 2$). 

\noindent Let $e_1,\dots,e_n$ and $f_1,\dots, f_m$ be orthogonal bases for $\mathbb{C}^n$ and $\mathbb{C}^m$ respectively. Then we denote 
\[k_{(m-t)n+s}:=e_s\otimes f_t\text{ and } \ell_{(m-t)n+s}:=\sqrt{-1}e_s\otimes f_t\]
for $1\leq s\leq n$ and $1\leq t\leq m$. It is an easy exercise to check that $(k_1,\dots,k_{nm},\ell_1,\dots,\ell_{nm})$ is an orthogonal basis for $\R^{2nm}$ such that the $k$'s are of norm $1$ and the $\ell$'s are of norm $-1$. 

\noindent Let $g=k_1\dots k_{(m-1)n+1}\ell_1\dots \ell_{(m-1)n+1}$ in the Clifford algebra of $\R^{2nm}$. Then $\pi(1)$ and $\pi(g)$ are elements  of the two different connected components in $\O(n,\mathbb{C})$. 

\noindent Likewise, if we define $h=k_1\dots k_n \ell_1\dots \ell_n$  then $\pi(1)$ and $\pi(h)$ are elements  of the two different connected components in $\O(m,\mathbb{C})$. 

\noindent We want to prove that $\pi^{-1}(\O(n,\mathbb{C}))$ and $\pi^{-1}(\O(m,\mathbb{C}))$ commute. Thus we want to prove that the following application is constantly trivial: \[\psi:\left|\begin{array}{rcl} \O(n,\mathbb{C}) \times \O(m,\mathbb{C})&\to& Z(\Pin(nm,nm,\R))\\ (u,v)&\mapsto& [\tilde{u},\tilde{v}]\end{array}\right.\]
Since $\psi$ is continuous and $Z(\Pin(nm,nm,\R))$ is discrete, the application factors through $\pi_0(\O(n,\mathbb{C}))\times \pi_0(\O(m,\mathbb{C}))$. As a result, it suffices to compute $\psi(x,y)$ for $(x,y)\in\{1,g\}\times \{1,h\}$ and, at the end, just $\psi(g,h)$. Explicitly, 
\begin{align*}
gh&=k_1\dots k_{(m-1)n+1}\ell_1\dots \ell_{(m-1)n+1}k_1\dots k_n \ell_1\dots \ell_n\\
&=(-1)^{(n-1)(2m-1)}k_1\dots k_n k_{n+1}\dots k_{(m-1)n+1}\ell_1\dots \ell_{(m-1)n+1}k_1 \ell_1\dots \ell_n\\
&=(-1)^{(n-1)(2m-1)}(-1)^{(m-1)(m+1)}k_1\dots k_n\ell_1\dots \ell_{(m-1)n+1}k_1 k_{n+1}\dots k_{(m-1)n+1} \ell_1\dots \ell_n\\
&=(-1)^{2(n-1)(2m-1)}(-1)^{(m-1)(m+1)}k_1\dots k_n\ell_1\dots \ell_n\ell_{n+1}\dots \ell_{(m-1)n+1}k_1\dots k_{(m-1)n+1} \ell_1\\
&=(-1)^{2(n-1)(2m-1)}(-1)^{2(m-1)(m+1)}k_1\dots k_n\ell_1\dots \ell_nk_1\dots k_{(m-1)n+1} \ell_1\ell_{n+1}\dots \ell_{(m-1)n+1}\\
&=hg.
\end{align*}
As a result $\psi(g,h)=1$, i.e. the lifts commute. 

\noindent Because of Lemma \ref{comdual}, $(\pi^{-1}(\O(n,\mathbb{C})),\pi^{-1}(\O(m,\mathbb{C})))$ is a dual pair in $\Pin(n,n,\R)$

\item The $(\GL(n,\mathbb{R}), \GL(m,\mathbb{R}))$-case. 

\noindent Let $s\in \GL(n,\R)$ be $\left(\begin{array}{c|c} -1&0\\\hline 0&I_{n-1}\end{array}\right)$ and $t\in \GL(m,\R)$ be $\left(\begin{array}{c|c} -1&0\\\hline 0&I_{m-1}\end{array}\right)$. Then $s$ (resp. $t$) is contained in the non-trivial component of $\GL(n,\R)$ (resp. $\GL(m,\R)$). In order to do this one needs to rewrite these elements in an orthogonal basis of non-isotropic vectors that is $b_{i,j,\pm}=\frac{1}{\sqrt{2}}\left(e_i\otimes f_j\pm e_i^*\otimes f_j^*\right)$. 

\noindent Then a pull-back of the aforementioned elements in $\Pin(nm,nm,\R)$  is given by 
\begin{align*}
\tilde{s}=b_{1,1,+}b_{1,1,-}\cdots b_{1,m,+}b_{1,m,-}\text{ and }\tilde{t}=b_{1,1,+}b_{1,1,-}\cdots b_{n,1,+}b_{n,1,-}.
\end{align*}
Making the same kind of computation as in the previous case, one has $\tilde{s}\ \ \tilde{t}=\tilde{t}\ \ \tilde{s}$ and using the same reasoning as in the previous case we know that the lift of  $(\GL(n,\mathbb{R}), \GL(m,\mathbb{R}))$ is a dual pair in $\Pin(nm,nm,\R)$. 
\end{itemize}

\end{proof}

\subsection{Identifying the isomorphism class of pull-backs}

Before stating the theorem, we recall some well-known facts about lifts of unitary groups in general. If $n\geq 1$, then $\U(n)$ has only two extensions by a group of order $2$, the trivial one (i.e. direct product) and another one given by the following formula :
\[\{(g, \xi) \in \U(n) \times \mathbb{C}^{*}, \xi^{2} = \det(g)\}.\]

If $p,q\geq 1$, there are four different lifts for $\U(p,q)$ and only one of them is non-trivial above both $\U(p)$ and $\U(q)$ (the maximal compact subgroup of $\U(p,q)$ is $\U(p)\times \U(q)$ and $\H^{2}(\U(n), \{\pm 1\})$ is of cardinal 2). We  denote this unique extension by $\Lambda(p,q)$. 

Our goal in this section is to prove Proposition \ref{theo:classlift} :

\classlift*

The proof {\it per se} of the theorem is in Section \ref{prooftheo}. Section \ref{prelrem} is devoted to a few technical lemmas needed in the proof. In Section \ref{inducprop}, we prove Proposition \ref{Decomp} which is largely inspired by Theorem 2.9 in \cite{SLU}. This proposition allows us to decompose the extensions arising from the pull-back of dual pairs into smaller pieces. 

\subsubsection{Preliminary remarks}\label{prelrem}

\begin{lemme}

\label{redcomp}

Let $1 \to \{\pm 1\} \to \G' \to \G \to 1$ be a discrete extension of a reductive Lie group $\G$ and $\K$ be a maximal compact subgroup of $\G$. Then the extension $\G'$ is trivial if and only if the induced extension of $\K$ is trivial. 

\end{lemme}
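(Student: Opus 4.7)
The ``only if'' direction is immediate: restricting a splitting $G' \cong G \times \{\pm 1\}$ to $K$ yields the trivial extension $\pi^{-1}(K) \cong K \times \{\pm 1\}$. My plan for the converse is to use the Cartan decomposition of the reductive group $G$: there is a subspace $\mathfrak{p}$ of $\mathfrak{g}$ such that $(k, X) \mapsto k \exp(X)$ is a diffeomorphism $K \times \mathfrak{p} \xrightarrow{\sim} G$. Two consequences will be used throughout: the inclusion $K \hookrightarrow G$ is a homotopy equivalence, so $\pi_1(K^0) \cong \pi_1(G^0)$; and $\exp(\mathfrak{p})$ being connected, $K$ meets every connected component of $G$.

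Given a splitting $s_K \colon K \to G'$, I will first produce a splitting on the identity component. The discrete double cover $\pi^{-1}(G^0) \to G^0$ is classified by an index $1$ or $2$ subgroup of $\pi_1(G^0)$; under the isomorphism above, this corresponds to the analogous subgroup classifying $\pi^{-1}(K^0) \to K^0$, which by hypothesis has index $1$. Hence the extension over $G^0$ splits via the identity component of $\pi^{-1}(G^0)$, yielding a group homomorphism $s^0 \colon G^0 \to G'$. Automatically $s^0|_{K^0} = s_K|_{K^0}$: both are continuous group homomorphisms splitting the trivial double cover of the connected group $K^0$, so they differ by a continuous character $K^0 \to \{\pm 1\}$, which must be trivial. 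I then extend $s^0$ to all of $G$ by setting $s(kg) := s_K(k) s^0(g)$ for $g \in G^0$ and $k \in K$ a chosen representative of a component of $G/G^0$, which makes sense because $K$ meets every component.

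The main obstacle will be this last step: verifying that $s$ does not depend on the choice of representative $k$ and that it is a group homomorphism. Both assertions reduce to the equality $s^0|_{K^0} = s_K|_{K^0}$ established above, combined with $s_K$ being a homomorphism on all of $K$; the remaining Cartan-decomposition bookkeeping is routine but must be carried out carefully, and is where the hypothesis on $K$ ultimately does its real work.
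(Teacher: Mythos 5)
Your argument is correct and in fact more careful than the paper's two-sentence sketch. The paper asserts that the group extension $\G'\to\G$ is trivial if and only if the underlying topological double cover is trivial, and then invokes the deformation retraction of $\G$ onto $\K$; but that equivalence fails once $\G$ is disconnected (for $\G=\R^*$, $\K=\{\pm1\}$, the non-split extension $\R_{>0}\times\mathbb{Z}/4\to\R^*$ is a topologically trivial double cover, since every double cover of $\R^*$ is). Your Step~1 is the part of the paper's topological argument that is genuinely valid: $\pi_1(\K^0)\cong\pi_1(\G^0)$ transports triviality of the cover between identity components, and the identity component of $\pi^{-1}(\G^0)$ then furnishes $s^0$. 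Steps~2 and~3 supply what the paper glosses over. They use the hypothesis in its strong form — a group splitting $s_K$ over $\K$, not merely a topologically trivial cover over $\K$ — and that is precisely what makes the gluing across $\G=\K\G^0$ work. The bookkeeping you defer does close: writing $x=k_xg$, $y=k_yh$ with $g,h\in\G^0$ and $k_x,k_y$ the chosen $\K$-representatives, one has $k_xk_y=k_{xy}m$ for some $m\in\K^0=\K\cap\G^0$, and $s(x)s(y)=s(xy)$ follows from $s_K|_{\K^0}=s^0|_{\K^0}$ together with the observation that conjugation by $s_K(k_y)$ preserves the identity component of $\pi^{-1}(\G^0)$ and hence intertwines $s^0$ with conjugation by $k_y$ on $\G^0$. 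In short, your route is longer than the paper's but it actually handles the disconnected case, which several of the paper's own applications (e.g.\ $\O(n,\mathbb{C})$, $\GL(n,\R)$) require.
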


\begin{proof}

First remark that the extension is trivial if and only if the topological covering $\G' \to \G$ is trivial, i.e. $\G \times \{\pm 1\} \to \G$. Because the fact that the extension is trivial boils down to a topological feature and that there is a topological retraction from any reductive Lie group to any of its maximal compact subgroup, the result follows.

\end{proof}

From the lemma above, it follows :

\begin{coro}

\label{coroconsimcon}

Let $\G$ be a connected, simply connected subgroup of $\O(\E, \b)$ then its pull-back in $\Pin(\E, \b)$ is a trivial extension. 

\end{coro}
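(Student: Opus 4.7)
The plan is to argue purely topologically, exploiting that a discrete cover of a connected simply connected Lie group must be trivial. Writing $\G' = \pi^{-1}(\G)$, we have a central extension
\[1 \longrightarrow \{\pm 1\} \longrightarrow \G' \longrightarrow \G \longrightarrow 1,\]
and the goal is to produce a continuous group-theoretic section $\sigma : \G \to \G'$.

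The key step is to analyse the identity component $\H$ of $\G'$ and show that the restriction $\pi_{|_{\H}} : \H \to \G$ is a Lie group isomorphism. Since $\pi$ is a local diffeomorphism and $\G$ is connected, $\pi_{|_{\H}}$ is a surjective Lie group homomorphism, and its kernel is $\H \cap \{\pm 1\}$, which is either trivial or equal to $\{\pm 1\}$. To rule out the second possibility, I would argue as follows: if $-1 \in \H$, pick a continuous path $\gamma$ in $\H$ joining $1$ to $-1$; the projection $\pi \circ \gamma$ is then a loop in $\G$, and by the simple connectedness of $\G$ this loop is null-homotopic. The homotopy-lifting property of the covering $\pi$ would lift this null-homotopy to $\G'$, yielding a homotopy between $\gamma$ and a constant path, which is impossible since $\gamma(0) \neq \gamma(1)$. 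Hence $\pi_{|_{\H}}$ has trivial kernel and is a Lie group isomorphism.

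The inverse of $\pi_{|_{\H}}$ is then a continuous group homomorphism $\sigma : \G \to \G'$ satisfying $\pi \circ \sigma = \Id_{\G}$, which splits the extension. The map
\[\G \times \{\pm 1\} \longrightarrow \G', \qquad (g, \varepsilon) \longmapsto \varepsilon \cdot \sigma(g),\]
is then a Lie group isomorphism, exhibiting $\G'$ as a trivial extension of $\G$ by $\{\pm 1\}$. The only genuinely non-formal ingredient is the simple connectedness of $\G$, used via the path-lifting argument above; all remaining verifications are routine. Note that, contrary to Lemma \ref{redcomp}, no reductivity hypothesis is required, because the topological argument works for any connected simply connected Lie subgroup.
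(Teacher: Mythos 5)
Your proof is correct and follows essentially the same route as the paper's: the paper observes that simple connectedness of $\G$ forces the discrete lift $\pi^{-1}(\G)$ to be disconnected and then invokes Lemma \ref{redcomp} to conclude triviality, while you spell out the path-lifting argument behind that disconnectedness and directly build the splitting homomorphism $\sigma = (\pi_{|_{\H}})^{-1}$. Your version is more self-contained (it does not route through Lemma \ref{redcomp} and, as you note, dispenses with the reductivity hypothesis), but the underlying topological mechanism is the same.
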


\begin{proof}

Because $\G$ is connected and simply connected any discrete lift of $\G$ has to be disconnected whence leads to a trivial extension by Lemma \ref{redcomp}. 

\end{proof}

\begin{lemme}

\label{trivialpullback}

Let $\pi: \G' \to \G$ be surjective morphism with kernel $\C=\langle c\rangle$ of order $2$. Let $\Delta=\langle (c,c)\rangle$ in $\G'\times \G'$. Let $\H$ be a subgroup of $\G$ and $\H_{0}$ be the diagonal embedding of $\H$ in $\G \times \G$. 

Then the pull-back of $\H_{0}$ in $\G' \times \G'/\Delta$  is a trivial extension of $\H$ by $\C$. 

\end{lemme}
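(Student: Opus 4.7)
My plan is to construct an explicit splitting of the extension. Let $q : \G' \times \G' \to (\G' \times \G')/\Delta$ be the quotient map. Since $\Delta \subset \C \times \C = \ker(\pi \times \pi)$, the product $\pi \times \pi$ descends to a surjection $\bar\pi : (\G' \times \G')/\Delta \to \G \times \G$ whose kernel $(\C \times \C)/\Delta$ has order two. Writing $\tilde{\H_{0}} := \bar\pi^{-1}(\H_{0})$ for the pull-back of the diagonal, we obtain an extension $1 \to \C \to \tilde{\H_{0}} \to \H \to 1$, and the task reduces to exhibiting a group-theoretic section $\H \to \tilde{\H_{0}}$.

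The key observation is that the diagonal map $\delta : \G' \to (\G' \times \G')/\Delta$ given by $g \mapsto q(g, g)$ is a homomorphism sending $c$ to $q(c, c) = 1$. It therefore factors through $\G = \G'/\C$ to give a homomorphism $\bar\delta : \G \to (\G' \times \G')/\Delta$ for which $\bar\pi \circ \bar\delta$ is the diagonal embedding $\G \hookrightarrow \G \times \G$. Restricting $\bar\delta$ to $\H$ produces a map into $\tilde{\H_{0}}$ that splits the projection $\tilde{\H_{0}} \to \H$.

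To assemble the direct product decomposition, I first note that $c$ is central in $\G'$, because the normal subgroup $\C$ has order two and hence trivial automorphism group. Consequently $q(c, 1) = q(1, c)$ is central in $(\G' \times \G')/\Delta$. The subgroup $\bar\delta(\H)$ meets $\langle q(c, 1)\rangle$ trivially (an equality $q(g, g) = q(c, 1)$ would force $(g, g) \in \{(c, 1), (1, c)\}$, impossible for $c \neq 1$), and together they exhaust $\tilde{\H_{0}}$: any $[g_{1}, g_{2}] \in \tilde{\H_{0}}$ satisfies $g_{1} = g_{2}$ or $g_{1} = c g_{2}$, giving $[g_{1}, g_{2}] = \bar\delta(\pi(g_{2}))$ or $q(c, 1) \cdot \bar\delta(\pi(g_{2}))$ respectively. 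Therefore $(h, \epsilon) \mapsto \bar\delta(h) \cdot \epsilon$ is the sought isomorphism $\H \times \C \cong \tilde{\H_{0}}$.

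No real obstacle arises in this argument: the entire content lies in the well-definedness of the diagonal lift modulo $\Delta$, which is immediate from the defining relation $(c, c) \in \Delta$, together with the fact that a normal subgroup of order two is automatically central.
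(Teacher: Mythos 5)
Your proof is correct and follows essentially the same route as the paper: your $\bar\delta(\H)$ is exactly the paper's $\F_{1}=\{(p,p)\mid p\in\P\}/\Delta$ and your $\langle q(c,1)\rangle$ is exactly $\F_{2}=\langle (1,c)\bmod\Delta\rangle$ (the two generators agree modulo $\Delta$). You have merely supplied the routine verifications that the paper declares ``easily done,'' including the (correct) observation that a normal subgroup of order two is automatically central.
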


\begin{proof}

Let $\P_{0}$ be the pull-back of $\H_{0}$  in $\G' \times \G'/\Delta$ and $\P$ be the pull-back of $\H$ in $\G'$. Then $\P_{0}$ contains the following subgroup :
\[\F_{1}:=\{(p,p)\mid p\in \P\}/\Delta.\]
It is clear that $\F_{1}$ is isomorphic to $\H$. $\P_{0}$ also contains $\F_{2}:=\langle (1,c)\mod \Delta\rangle$. Verifying that $\F_{1}$ and $\F_{2}$ are direct factors of $\P_{0}$ is easily done and the result follows. 

\end{proof}

\subsubsection{The induction proposition}\label{inducprop}

Because the dual pairs we are interested in always lie $\SO(\E, \b)$, the framework is simpler than in \cite[Section~2]{SLU}. 

Let $\iota : \G \to \SO(\E, \b)$ be an inclusion then we define $\Ex_{\iota}(\G)$ to be the extension given by the pull-back of $\G$ in $\Pin(\E, \b)$ :
\[1 \to \{\pm 1\} \to \pi^{-1}(\G) \to \G \to 1.\]
Classically, we can associate to any class of such central extension an element (still denoted $\Ex_{\iota}(\G)$) in $\H^2(\G, \{\pm 1\})$. 

\begin{rema}

To any such extension we can associate a $2$-cocycle $z_{\iota}(\G)$ (whose class is denoted by $\Ex_{\iota}(\G)$) by choosing a set-theoretic section $s: \G \to \pi^{-1}(\G)$ and defining :
\[z_{\iota}(\G)(g,h):=s(g)s(h)s(gh)^{-1}.\]
The fact that $ z_{\iota}(\G)$ is a $2$-cocycle for $\G$ with values in $\{\pm 1\}$ is a classical abstract non-sense. While non-trivial, proving that this induces a bijection between classes of extensions and $\H^{2}(\G, \{\pm 1\})$ is a classical matter that we leave to the reader (see for instance \cite{BRO}). 

\end{rema}

In our case, dual pairs in $\O(\E, \b)$  are always indexed by two integers or two pairs of integers $(n,m)$, so that we can write them all as $(\G, \G')=(\G_n, \G_m)$. For a given $\G_n$, $\iota_m: \G_n \to \O(\E, \b)$ is the inclusion for which $(\G_n, \G_m)$ is a dual pair in $\O(\E, \b)$. One easily sees from the construction of such dual pairs (this construction is recalled in proposition \ref{PropositionDP}) that if $m_1+m_2=m$ and we denote $\iota_{m_i}: \G_n \to \O(\E_{i}, \b_{i})$, then we have an orthogonal sum:
\[\E = \E_1\oplus \E_2.\]
The wanted decomposition is given in the following proposition :

\begin{prop}

\label{Decomp}

Assume $n, m_1, m_2$ are non-zero integers or pair of integers and $\G_*$ be any series of groups arising in the list of dual pairs  in $\O(\E, \b)$ except the real orthogonal one in Corollary \pageref{ClassificationDP}. Then 
\[\Ex_{\iota_{m_1+m_2}}(\G_n) = \Ex_{\iota_{m_1}}(\G)\Ex_{\iota_{m_2}}(\G).\]

\end{prop}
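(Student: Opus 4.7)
The plan is to exhibit an explicit $2$-cocycle representative of $\Ex_{\iota_{m_1+m_2}}(\G_n)$ as the pointwise product of representatives of $\Ex_{\iota_{m_1}}(\G_n)$ and $\Ex_{\iota_{m_2}}(\G_n)$, built from the orthogonal decomposition $\E = \E_1 \oplus \E_2$. The first preparatory step is to check, case by case in Corollary \ref{ClassificationDP}, that $\iota_{m_i}(\G_n) \subseteq \SO(\E_i, \b_i)$ for each $i$; this holds either by connectedness of $\G_n$ (the unitary, symplectic, complex/quaternionic orthogonal cases, all embedded via tensor products) or, for the general linear cases, because an element acting as $g$ on one Lagrangian and as $g^{-\top}$ on its dual has determinant one. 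Consequently, any set-theoretic lift $s_i(g)$ of $\iota_{m_i}(g)$ to $\Pin(\E_i, \b_i)$ already lies in $\Spin(\E_i, \b_i)$, hence in the even subalgebra $\Cliff^{0}(\E_i, \b_i)$.

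The key algebraic input is that, since $\b(\E_1, \E_2) = 0$, vectors from $\E_1$ anti-commute with vectors from $\E_2$ inside $\Cliff(\E, \b)$; consequently any element of $\Cliff^{0}(\E_1, \b_1)$ commutes with every element of $\Cliff(\E_2, \b_2)$, and symmetrically. Setting $s(g) := s_1(g) s_2(g)$ thus produces a set-theoretic lift of $\iota_{m_1+m_2}(g)$ to $\Spin(\E, \b)$, and using that $s_2(g) \in \Cliff^{0}(\E_2, \b_2)$ commutes with $s_1(h) \in \Cliff^{0}(\E_1, \b_1)$ one computes
\[
s(g) s(h) \;=\; s_1(g) s_1(h) \cdot s_2(g) s_2(h) \;=\; z_1(g,h)\, z_2(g,h)\, s(gh),
\]
where $z_i(g,h) = s_i(g) s_i(h) s_i(gh)^{-1} \in \{\pm 1\}$ represents $\Ex_{\iota_{m_i}}(\G_n)$. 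Therefore a representative cocycle for $\Ex_{\iota_{m_1+m_2}}(\G_n)$ is the pointwise product $z_1 \cdot z_2$, and the proposition follows by passing to classes in $\H^{2}(\G_n, \{\pm 1\})$.

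The main obstacle is the uniform verification of the $\SO$-containment; once this is in place the argument is essentially formal. Indeed, were $\iota_{m_i}(\G_n)$ to meet the non-identity component of $\O(\E_i, \b_i)$, the lift $s_i(g)$ could be a product of an odd number of vectors, and the commutation $s_2(g) s_1(h) = s_1(h) s_2(g)$ would acquire a parity sign $(-1)^{\epsilon_1(h)\epsilon_2(g)}$, producing an extra $2$-cocycle factor in the product. This parity correction is precisely what makes the real orthogonal case in \cite{SLU} more delicate, and the hypothesis of the proposition is tailored so that this correction disappears.
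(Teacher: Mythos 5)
Your argument is correct and essentially identical to the paper's: both reduce to the observation that, under the orthogonal decomposition $\E=\E_1\oplus\E_2$, lifts of $\iota_{m_i}(\G_n)$ land in the even Clifford algebra (since $\iota_{m_i}(\G_n)\subseteq\SO(\E_i,\b_i)$), so that $s_1(g)$ and $s_2(h)$ commute and the section $s(g)=s_1(g)s_2(g)$ yields the product cocycle $z_1\cdot z_2$. The paper phrases the commutation via the graded tensor product $\Cliff(\E_1,\b_1)\,\hat\otimes\,\Cliff(\E_2,\b_2)$ while you work directly with anticommuting generators inside $\Cliff(\E,\b)$, but this is the same computation; your explicit case-by-case verification of the $\SO$-containment (and the closing remark on the parity obstruction in the real orthogonal case) fills in details the paper leaves implicit.
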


\begin{proof}

We use the formalism of $2$-cocycles to prove this. We recall, that if $\E = \E_1 \oplus \E_2$ (orthogonal sum of quadratic spaces) then we have an identity on the Clifford algebra (see \cite{ABS}):
\begin{equation*}
\Cliff(\E, \b) = \Cliff(\E_{1}, \b_{1}) \hat{\otimes} \Cliff(\E_{2}, \b_{2}).
\end{equation*}
The product is defined on homogenous  components relative to the natural $\mathbb{Z}_{2}$-grading, if $c_1,d_1\in \Cliff(\E_{1}, \b_{1})$ are of grading $u_1,v_1$ and $c_{2}, d_{2} \in \Cliff(\E_{2}, \b_{2})$ are of grading $u_2, v_2$ then 
\begin{equation*}
(c_{1} \otimes c_{2}) \times (d_{1} \otimes d_{2}) = (-1)^{v_{1}u_{2}}(c_{1}d_{1}) \otimes (c_{2}d_{2}).
\end{equation*}
For an element $c \in \Pin(\E, \b)$, the $\mathbb{Z}_{2}$ grading is given by 0 if $\det_{\E}(c) = 1$ and $1$ otherwise.

In particular, if $c_i, d_i$ are above $\SO(\E_{i}, \b_{i})$, we end up with the formula 
\begin{equation}
\label{equationdecorth}
(c_1\otimes c_2) \times (d_1\otimes d_2) = (c_1d_1)\otimes (c_2d_2).
\end{equation}

Now the proposition is just some abstract non-sense. We denote $s_i$ a section for $\pi_i:\pi_i^{-1}(\iota_{m_i}(\G_n)) \to \G_n$. Let $s$ be the following section of   $\pi: \pi^{-1}((\iota_{m_1+m_2})(\G_n)) \to \G_n$ defined by $s(g):= s_1(g)\otimes s_2(g)$. If we are given two elements $s_1(g)\otimes s_2(g)$ and $s_1(h)\otimes s_2(h)$, then we have 
\begin{align*}
s(g)s(h)&=(s_1(g)\otimes s_2(g))(s_1(h)\otimes s_2(h))\\
&=s_1(g)s_1(h)\otimes s_2(g)s_2(h)\text{ (because  of Equation \ref{equationdecorth})}\\
&=z_{\iota_{m_1}}(\G)s_1(gh)z_{\iota_{m_2}}(\G)s_2(gh)\\
&=z_{\iota_{m_1}}(\G)z_{\iota_{m_2}}(\G)\underbrace{s_1(gh)\otimes s_2(gh)}_{s(gh)}\end{align*}
So that, finally,  in $\H^2(\G, \{\pm 1\})$, 
\begin{equation*}
\Ex_{\iota_{m_1+m_2}}(\G) = \Ex_{\iota_{m_1}}(\G)Ex_{\iota_{m_2}}(\G).
\end{equation*}

\end{proof}

In the real orthogonal case, it is slightly more complicated because one has to take the determinant into account (whence the use of graded extensions in \cite{SLU}). In our case, it should be remarked that $\H^{2}(\G, \{\pm1\})$ is of exponent $2$. 

\subsubsection{Proof of the classification theorem}\label{prooftheo}

We first deal with a few easy cases of type $I$. 

\begin{prop}

Let $(\G, \G')$ be one of the following dual pairs in $\O(\E, \b)$ :
\begin{itemize}
\item $(\Sp(2n, \mathbb{C}),\Sp(2m,\mathbb{C}))$ ($n,m\geq 1$),
\item $(\Sp(p_1, q_1, \mathbb{H}), \Sp(p_2, q_2, \mathbb{H}))$, ($p_1,q_1,p_2,q_2\geq 0$),
\item $(\O^{*}(n, \mathbb{H}), \O^{*}(m, \mathbb{H}))$,  ($n,m\geq 1$).
\end{itemize}
Then, the lift of $(\G, \G')$ in $\Pin(\E, \b)$ is trivial. 

\end{prop}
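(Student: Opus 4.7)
The plan is to dispatch the three cases separately: the symplectic cases follow immediately from the preliminary lemmas, whereas the $\O^*$ case requires an additional explicit computation.

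For $(\Sp(2n,\mathbb{C}),\Sp(2m,\mathbb{C}))$, both factors are complex simple Lie groups of type $C_*$, hence connected and simply connected. For $(\Sp(p_1,q_1,\mathbb{H}),\Sp(p_2,q_2,\mathbb{H}))$, each factor $\Sp(p,q,\mathbb{H})$ has maximal compact subgroup $\Sp(p)\times \Sp(q)$; both factors of this product are simply connected (as compact Lie groups of type $C_*$), so by Cartan decomposition $\Sp(p,q,\mathbb{H})$ itself is simply connected. In both cases, Corollary \ref{coroconsimcon} directly yields that the pull-back of each factor is a trivial extension, and the lift of the pair is then the direct product of these trivial lifts.

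The case $(\O^*(n,\mathbb{H}), \O^*(m,\mathbb{H}))$ requires more care, since $\O^*(n,\mathbb{H})$ has maximal compact subgroup $\U(n)$ with $\pi_1(\U(n))=\mathbb{Z}$ and so is not simply connected. I would proceed as follows. First, using Proposition \ref{Decomp} together with the fact that $\H^2(-,\{\pm 1\})$ has exponent $2$, it suffices to establish triviality of the lift when $m=1$, because then $\Ex_{\iota_m}(\O^*(n,\mathbb{H})) = \Ex_{\iota_1}(\O^*(n,\mathbb{H}))^m$ is trivial for all $m$ (and by symmetry one recovers the statement for $\O^*(m,\mathbb{H})$ as well). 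For the base case $m = 1$, the embedding $\O^*(n,\mathbb{H})\subseteq \O(2n,2n,\mathbb{R})$ together with Lemma \ref{redcomp} reduces the problem to showing that the pull-back of the maximal compact $\U(n)$ is trivial. Writing $\U(n) = \mathrm{SU}(n)\cdot \U(1)_Z$ with center $\U(1)_Z = \{e^{i\theta}I_n\}$, Corollary \ref{coroconsimcon} provides a continuous section over $\mathrm{SU}(n)$ (which is simply connected), so it only remains to construct a compatible section on $\U(1)_Z$.

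The main obstacle is the explicit computation for this central circle. The element $e^{i\theta}I_n$ decomposes on each copy of $\mathbb{H}\cong \mathbb{R}^4$ as the product of two planar rotations (by $\theta$ and $-\theta$ in two orthogonal $2$-planes), and its lift in the Clifford algebra of $\mathbb{R}^{4n}$ is a product of $2n$ factors of the form $\cos(\theta/2)\pm\sin(\theta/2)\,v_i v_j$. One must compute the sign of the product at $\theta = 2\pi$: if it equals $+1$, the loop $\theta \mapsto e^{i\theta}I_n$ lifts to a closed loop in $\Pin(2n,2n,\mathbb{R})$, yielding the desired section. The $\pm$ signs appearing in the factors and the signatures of the relevant $2$-planes must be tracked carefully using the explicit form of the real symmetric bilinear form arising from the quaternionic skew-Hermitian structure. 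A final verification that this section agrees with the one on $\mathrm{SU}(n)$ over the intersection $\mu_n$ then concludes the argument, yielding a continuous section $\U(n)\to \Pin(2n,2n,\mathbb{R})$ and hence the triviality of the lift.
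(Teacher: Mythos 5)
You have correctly identified a real gap in the paper's own argument. The proof given there claims that all three families are connected and simply connected, but this is false for $\O^*(n,\mathbb{H})$, whose maximal compact subgroup is $\U(n)$, so that $\pi_1(\O^*(n,\mathbb{H}))\cong\pi_1(\U(n))\cong\mathbb{Z}$. Corollary \ref{coroconsimcon} therefore does not apply to the third family, and the paper's one-line proof is wrong there even though the conclusion is true. Your treatment of the two symplectic families agrees with the paper's and is correct.

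That said, your own handling of the $\O^*$ case is incomplete. The reductions you carry out --- to the case $m=1$ via Proposition \ref{Decomp} and the exponent-$2$ observation, then to the maximal compact $\U(n)$ via Lemma \ref{redcomp}, and then to the central circle using Corollary \ref{coroconsimcon} over $\mathrm{SU}(n)$ --- are all sound. But the decisive step, computing the sign of the Clifford lift of the loop $\theta\mapsto e^{i\theta}I_n$ at $\theta=2\pi$, is only described and not carried out, and it is exactly the non-trivial point: the analogous loop $\U(1)\hookrightarrow\O(2,\mathbb{R})$ lifts non-trivially to $\Pin(2,\mathbb{R})$, so the sign here is not $+1$ for free. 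As written you have a reduction, not a proof. A cleaner way to close the gap, staying entirely within the paper's toolkit (compare the $\O(n,\mathbb{C})$ and $\Sp(2n,\mathbb{R})$ cases in the very next proposition), is to note that for $g\in\U(n)\subseteq\GL(n,\mathbb{H})$ and $v\in\mathbb{C}^n$ one has $g\cdot(vj)=(g\cdot v)j$, so $\U(n)$ acts by the same standard representation on both summands of $\mathbb{H}^n=\mathbb{C}^n\oplus\mathbb{C}^n j$; after passing to a basis adapted to the signature, this exhibits $\U(n)$ as diagonally embedded in $\O(2n,\mathbb{R})\times\O(2n,\mathbb{R})\subseteq\O(2n,2n,\mathbb{R})$, and Lemma \ref{trivialpullback} gives triviality of the pull-back at once, with no Clifford-algebra sign chase.
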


\begin{proof}

In any of these cases (see \cite{VIN} or \cite{KNA} for instance), the groups involved are connected and simply connected. It follows from Corollary \ref{coroconsimcon} that the extensions need to be trivial. 

\end{proof}

The next cases involve more computations.

\begin{prop}

If $(\G, \G')$ is one of the following dual pairs in $\O(\E, \b)$ :
\begin{itemize}
\item $(\O(n, \mathbb{C}), \O(m, \mathbb{C}))$ ($n,m\geq 2$) or
\item $(\Sp(2n,\mathbb{R}), \Sp(2m,\mathbb{R}))$ ($n,m\geq 1$),
\end{itemize}
then the lift of $(\G, \G')$ in $\Pin(\E, \b)$ is trivial. 

\end{prop}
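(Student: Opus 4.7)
The strategy for both cases is the same: combine Proposition \ref{Decomp} with Lemma \ref{redcomp}. Because $\H^2(\G, \{\pm 1\})$ has exponent two, the multiplicativity $\Ex_{\iota_m}(\G) = \Ex_{\iota_1}(\G)^m$ provided by Proposition \ref{Decomp} reduces the problem to showing that $\Ex_{\iota_1}(\G)$ is trivial for each member $\G$ of the dual pair; Lemma \ref{redcomp} then allows us to test this after restriction to a maximal compact subgroup of $\G$.

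For $(\O(n, \mathbb{C}), \O(m, \mathbb{C}))$, the base map $\iota_1: \O(n, \mathbb{C}) \to \O(n, n, \R)$ is the realification of the standard representation, and the maximal compact $\O(n, \R) \subset \O(n, \mathbb{C})$ embeds diagonally into $\O(n, \R) \times \O(n, \R) \subset \O(n, n, \R)$ via the real and imaginary parts of $\mathbb{C}^n$, the second factor carrying the negative of the standard form. Under the identification $\Cliff(n, n, \R) = \Cliff(n, 0) \hat\otimes \Cliff(0, n)$, the pull-back above $\O(n, \R) \times \O(n, \R)$ is $\Pin(n, 0) \hat\otimes \Pin(0, n)/\Delta$. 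I would then compute the class of the diagonal pull-back in $\H^2(\O(n, \R), \{\pm 1\})$: denoting by $\alpha_\pm$ the classes of $\Pin(n, 0)$ and $\Pin(0, n)$ respectively, the graded tensor product contributes a cross-term cocycle coming from the anticommutation of odd elements across the two factors, which pulls back to $w_1^2$ on the diagonal, and one verifies that $\alpha_+ + \alpha_- + w_1^2 = 2 w_2 + 2 w_1^2 = 0$. Hence $\Ex_{\iota_1}(\O(n, \mathbb{C}))$ is trivial.

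For $(\Sp(2n, \R), \Sp(2m, \R))$, the base map $\iota_1: \Sp(2n, \R) \to \O(2n, 2n, \R)$ comes from tensoring the defining symplectic action with the two-dimensional symplectic $\R^2$. The maximal compact $\U(n) \subset \Sp(2n, \R)$ commutes with the complex structure $J$ on $\R^{2n}$, hence sits inside $\SO(2n, \R)$ and embeds diagonally in $\SO(2n, \R) \times \SO(2n, \R) \subset \SO(2n, 2n, \R)$. Since $\U(n) \subset \SO$, the graded cross-term cocycle vanishes, and the pull-back class reduces to $[\Lambda_+(n)] + [\Lambda_-(n)]$ in $\H^2(\U(n), \{\pm 1\})$, where $\Lambda_\pm(n)$ denote the restrictions of $\Pin(2n, 0, \R)$ and $\Pin(0, 2n, \R)$ to $\U(n)$. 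Both restrictions give the same extension $\{(g, \xi) : \xi^2 = \det(g)\}$ (the Spin cover of $\SO(2n, \R)$ is insensitive to the sign of the form), so their sum vanishes. Alternatively and more concretely, Corollary \ref{coroconsimcon} ensures $\mathrm{SU}(n)$ lifts trivially, and one verifies directly that the generating circle $\{\diag(e^{i\theta}, 1, \ldots, 1)\} \subset \U(n)$ has a lift of period $2\pi$ in $\Pin(2n, 2n, \R)$: it acts as a double rotation in a positive- and a negative-definite 2-plane of a 4-dimensional subspace of $\R^{4n}$, and the two factors of $-1$ picked up at $\theta = 2\pi$ multiply to $+1$.

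The main obstacle is the careful accounting of Clifford signs. In the orthogonal case, triviality hinges on a precise cancellation between the cohomological difference $\alpha_+ + \alpha_- = w_1^2$ of the two Pin covers and the graded tensor cocycle $\epsilon_1 \cup \epsilon_2 = w_1^2$; in the symplectic case the cross term disappears because $\U(n)$ lies in $\SO$, and the two copies of $[\Lambda(n)]$ cancel directly. A further subtlety is that the reduction provided by Proposition \ref{Decomp} does \emph{not} immediately give triviality for odd $m$, so the base case $m = 1$ genuinely has to be handled by the above maximal-compact analysis rather than by cheap induction.
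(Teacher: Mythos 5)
Your proof is correct and, after the common opening moves, takes a genuinely different route from the paper at the key step. Both arguments reduce via Proposition \ref{Decomp} to the base embeddings $\O(n,\mathbb{C})\hookrightarrow\O(n,n,\R)$ and $\Sp(2n,\R)\hookrightarrow\O(2n,2n,\R)$, and then via Lemma \ref{redcomp} to the maximal compact subgroups $\O(n,\R)$ and $\U(n)$ embedded diagonally in $\O(n,\R)\times\O(n,\R)$ (resp.\ $\O(2n,\R)\times\O(2n,\R)$). At this point the paper invokes Lemma \ref{trivialpullback}: a diagonal subgroup of $\G\times\G$ always pulls back trivially in a quotient $\G'\times\G'/\Delta$. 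You instead carry out a direct cocycle computation in $\H^2(\O(n,\R),\{\pm 1\})$, writing the pull-back class as $\alpha_+ + \alpha_- + w_1^2$ (the two Pin classes plus the graded-tensor cross-term) and cancelling via the standard identity $\alpha_{\Pin^+}-\alpha_{\Pin^-}=w_1^2$. Your version has the virtue of making the Clifford sign bookkeeping fully explicit: the paper's appeal to Lemma \ref{trivialpullback} is shorter, but it tacitly treats the preimage of $\O(n,\R)\times\O(n,\R)$ in $\Pin(n,n,\R)$ as an ordinary product quotient, whereas the Clifford decomposition is a \emph{graded} tensor product $\Cliff(n,0)\hat\otimes\Cliff(0,n)$; it is precisely your $w_1^2$-cancellation that shows the graded correction on the odd component is harmless. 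The price you pay is needing the nontrivial identification of the group-extension classes of $\Pin^\pm(n)$ with $w_2$ and $w_2+w_1^2$, which the paper's group-theoretic lemma avoids. In the symplectic case you correctly observe that the cross-term vanishes because $\U(n)\subset\SO(2n,\R)$, so the two approaches coincide there; your additional elementary check that a generating circle of $\U(n)$ lifts with period $2\pi$ (a double rotation in a positive- and a negative-definite $2$-plane, each contributing $-1$ at $\theta=2\pi$) is a nice concrete confirmation not in the paper, and the assertion about the invariant planes being definite does hold after a suitable change of basis in the $(2,2)$-signature $4$-space, though this deserves a line of verification since the more obvious invariant planes are isotropic.
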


\begin{proof}

Because of Proposition \ref{Decomp}, we only need to prove that the lift of $\O(n, \mathbb{C})$ in $\O(n,n,\R)$ (although, it is not a dual pair if $m=1$, we  still use this inclusion as a building block to understand extensions for $m \geq 2$)  and $\Sp(2n,\mathbb{R})$ in $\O(2n,2n,\R)$ are trivial. 

\begin{itemize}
\item Let $(e_1,\dots,e_n)$ be an orthonormal basis of $(\mathbb{C}^n,b)$. Then $(e_1,\dots,e_n,\sqrt{-1}e_1,\dots,\sqrt{-1}e_n)$ is a real basis of $(\mathbb{C}^n)_{\R}$. Furthermore this is an orthogonal basis for the bilinear form $\Re(b)$ (the first $n$ vectors are of norm $1$ and the remaining ones are of norm $-1$). From this realification of the quadratic space follows the inclusion of $\O(n, \mathbb{C})$ in $\O(n, n, \R)$.

By the very definition of the inclusion of $\O(n, \R)$ in $\O(n, \mathbb{C})$, any element of $\O(n, \R)$ commutes with the multiplication by $\sqrt{-1}$. As a result $\O(n, \R)$ is diagonally contained in $\O(n, \R) \times \O(n, \R)$ the maximal compact subgroup of $\O(n, n, \R)$ and therefore (via Lemma \ref{trivialpullback}) has a trivial pull-back in $\Pin(n, n, \R)$. Since $\O(n, \R)$ is the maximal compact subgroup of $\O(n, \mathbb{C})$, Lemma \ref{redcomp} implies that the pull-back of $\O(n, \mathbb{C})$ in $\Pin(n, n, \R)$ needs to be trivial as well. 

\item In the symplectic case, Lemma \ref{redcomp} implies that we only need to focus on the pull-back of $\U(n)$ (a maximal compact subgroup of $\Sp(2n,\R)$) in $\Pin(2n, 2n, \R)$ via the inclusion of $\Sp(2n, \R)$ in $\O(2n, 2n, \R)$. Then, similarly to the preceding case, we diagonally include $\U(n)$ in $\O(2n, \R)\times \O(2n, \R)$ and using Lemma \ref{trivialpullback}, we have a trivial extension in $\Pin(2n, 2n, \R)$ as well. 
\end{itemize}

\end{proof}

It should be remarked that, in the symplectic case, we have only two choices for the pull-back of $\Sp(2n,\R)$, either the trivial one or the metaplectic one. Since the metaplectic group is not linear while $\Pin(2n, 2n, \R)$ is, we could have directly conclude that the pull-back of $\Sp(2n,\R)$ had to be the trivial one. 

We end the study of the cases of type $I$ with the unitary dual pairs. 

\begin{prop}

Let $p_1,q_1,p_2,q_2$ be non-negative integers so that $p_1q_1\neq 0$ and $p_2q_2\neq 0$. The lift of $\U(p_1,q_1)$  in $\Pin(2(p_1p_2+q_1q_2),2(p_1q_2+q_1p_2),\R)$ arising from the dual pair $(\U(p_1,q_1),U(p_2,q_2))$ in $\O(2(p_1p_2+q_1q_2),2(p_1q_2+q_1p_2),\R)$  is isomorphic to 
\[\left\lbrace\begin{array}{ll}\Lambda(p_1,q_1)&\text{if $p_2+q_2$ is odd}\\ \U(p_1,q_1)\times \{\pm 1\}&\text{ otherwise}\end{array}\right.\]

\end{prop}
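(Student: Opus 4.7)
The plan is to reduce the computation to two atomic cases via Proposition~\ref{Decomp}, compute each atomic extension, and then reassemble.

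Writing $(p_2,q_2) = p_2\cdot(1,0) + q_2\cdot(0,1)$ and iterating Proposition~\ref{Decomp}, I obtain the identity
\[
\Ex_{\iota_{(p_2,q_2)}}(\U(p_1,q_1)) = \Ex_{\iota_{(1,0)}}(\U(p_1,q_1))^{p_2}\cdot\Ex_{\iota_{(0,1)}}(\U(p_1,q_1))^{q_2}
\]
in $\H^{2}(\U(p_1,q_1),\{\pm1\})$, an abelian group of exponent $2$. Hence only the parities of $p_2$ and $q_2$ will matter in the end.

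Next I identify each atomic extension. For $(p_2,q_2)=(1,0)$ the inclusion is the realification $\U(p_1,q_1)\hookrightarrow\O(2p_1,2q_1,\R)$. By Lemma~\ref{redcomp}, the isomorphism class of the pull-back is determined by its restriction to the maximal compact subgroup $\U(p_1)\times\U(q_1)$, which sits inside $\SO(2p_1)\times\SO(2q_1)$ via the realifications $\U(k)\hookrightarrow\SO(2k)$. Using the graded tensor decomposition $\Cliff(\R^{2p_1,2q_1})=\Cliff(\R^{2p_1,0})\,\hat\otimes\,\Cliff(\R^{0,2q_1})$, the pull-back of $\SO(2p_1)\times\{\Id\}$ (resp.\ $\{\Id\}\times\SO(2q_1)$) in $\Pin(2p_1,2q_1,\R)$ is canonically $\Spin(2p_1)$ (resp.\ $\Spin(2q_1)$). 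On each factor, the pull-back of $\U(k)\hookrightarrow\SO(2k)$ in $\Spin(2k)$ is the classical non-trivial double cover $\{(g,z)\in\U(k)\times\mathbb{C}^{*}:z^{2}=\det(g)\}$, so $\Ex_{\iota_{(1,0)}}(\U(p_1,q_1))$ is non-trivial above both $\U(p_1)$ and $\U(q_1)$; this characterizes it as $\Lambda(p_1,q_1)$. For $(p_2,q_2)=(0,1)$, the inclusion $\U(p_1,q_1)\hookrightarrow\O(2q_1,2p_1,\R)$ merely swaps the positive and negative subspaces of the signature, so the same argument gives $\Ex_{\iota_{(0,1)}}(\U(p_1,q_1))=\Lambda(p_1,q_1)$. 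Combining, $\Ex_{\iota_{(p_2,q_2)}}(\U(p_1,q_1))=\Lambda(p_1,q_1)^{p_2+q_2}$, which equals $\Lambda(p_1,q_1)$ when $p_2+q_2$ is odd and the trivial extension $\U(p_1,q_1)\times\{\pm1\}$ otherwise.

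The main obstacle is the classical atomic statement that the realification $\U(k)\hookrightarrow\SO(2k)$ lifts to the non-trivial double cover of $\U(k)$ inside $\Spin(2k)$. For $k\geq 2$ this is obtained by observing that the loop $t\mapsto\diag(e^{2\pi\sqrt{-1}t},1,\ldots,1)$ in $\U(k)$ generates $\pi_{1}(\SO(2k))\cong\mathbb{Z}/2\mathbb{Z}$, while the case $k=1$ follows directly from the identification of $\Spin(2)\to\SO(2)$ as the connected double cover.
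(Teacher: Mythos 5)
Your proof is correct and takes essentially the same route as the paper: both reduce via Proposition~\ref{Decomp} and the exponent-$2$ property of $\H^{2}(\U(p_1,q_1),\{\pm 1\})$ to the atomic cases $(p_2,q_2)\in\{(1,0),(0,1)\}$, pass to the maximal compact $\U(p_1)\times\U(q_1)$ by Lemma~\ref{redcomp}, and identify the resulting cover as $\Lambda(p_1,q_1)$ because it is non-trivial over both $\U(p_1)$ and $\U(q_1)$. The only difference is that where the paper cites Meinrenken for the non-triviality of the lift of $\U(k)\hookrightarrow\SO(2k)$ inside $\Spin(2k)$, you give a self-contained $\pi_1$ argument for that classical fact.
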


\begin{proof}

Because of Proposition \ref{Decomp} and the fact that $\H^{2}(\U(p,q), \{\pm 1\})$ is of exponent $2$,  the result  directly follows if we prove that for the cases $(p_2,q_2)=(1,0)$ and $(0,1)$ the lift of $\U(p_1,q_1)$ in the corresponding pin-group is $\Lambda(p_1,q_1)$. 

First remark that these two cases respectively correspond to the inclusion of $\U(p_1,q_1)$ in $\O(2p_1,2q_1,\R)$ and $\O(2q_1,2p_1,\R)$. Since these last two groups are isomorphic, it suffices to prove it for the inclusion of $\U(p_1,q_1)$ in $\O(2p_1,2q_1,\R)$. 

Because of Lemma \ref{redcomp}, we focus on the inclusion of the maximal compact subgroup $\U(p_1)\times \U(q_1)$ in $\O(2p_1,\R)\times \O(2q_1,\R)\leq \O(2p_1,2q_1,\R)$. Since it is known that the lift of $\U(n)$ in $\Pin(2n,\R)$ via the inclusion of $\U(n)$ in $\O(2n,\R)$ (see e.g. \cite{MEI}, page 77, remark 3.9), it follows that the lift $\U(p_1,q_1)$ in $\Pin(2p_1,2q_1,\R)$ is both non-trivial when restricted to $\U(p_1)$ and $\U(q_1)$, this lift is therefore $\Lambda(p_1,q_1)$ by definition.

\end{proof}

Then we deal with the case of type $II$. 

\begin{prop}

Let $(n,m)$ be a pair of positive integers and $\mathbb{K} = \R$, $\mathbb{C}$ or $\mathbb{H}$. Then the lift of the dual pair $(\GL(n,\mathbb{K}),\GL(m,\mathbb{K}))$ is trivial. 

\end{prop}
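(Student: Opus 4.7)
The plan is to follow the same template as the preceding propositions: collapse the $m$-dependence via Proposition \ref{Decomp}, pass to a maximal compact subgroup via Lemma \ref{redcomp}, and exhibit a diagonal embedding so as to apply Lemma \ref{trivialpullback}. By symmetry in the roles of $n$ and $m$, it suffices to treat $\GL(n, \mathbb{K})$.

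First, Proposition \ref{Decomp} yields the multiplicative relation $\Ex_{\iota_m}(\GL(n, \mathbb{K})) = \Ex_{\iota_1}(\GL(n, \mathbb{K}))^m$ in $\H^2(\GL(n, \mathbb{K}), \{\pm 1\})$, so the problem reduces to the base case $m = 1$: showing that the lift of $\GL(n, \mathbb{K})$ in $\Pin(dn, dn, \R)$ arising from the inclusion $\GL(n, \mathbb{K}) \hookrightarrow \O(dn, dn, \R)$ is trivial, where $d = \dim_{\R} \mathbb{K} \in \{1, 2, 4\}$.

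In this base case, writing $\F = \mathbb{K}^n$ as a real vector space of dimension $dn$, one has $\E = \F \oplus \F^*$ with its canonical symmetric pairing, and $\GL(n, \mathbb{K})$ acts by $g \cdot (v, \varphi) = (gv, g^{-T}\varphi)$. By Lemma \ref{redcomp} it is enough to check triviality on a maximal compact subgroup $\K \subset \GL(n, \mathbb{K})$, namely $\K = \O(n), \U(n), \Sp(n)$ for $\mathbb{K} = \R, \mathbb{C}, \mathbb{H}$ respectively. Each such $\K$ preserves the standard real inner product on $\F$, so $\K \subset \O(dn)$, and the real-linear identification $\F \cong \F^*$ it induces is $\K$-equivariant; in other words, every $k \in \K$ acts in the same way on $\F$ and on $\F^*$.

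Choosing an orthonormal basis $(e_i)$ of $\F$ and forming the orthogonal basis $u_i = \tfrac{1}{\sqrt{2}}(e_i + e_i^*)$ (of norm $+1$) and $v_i = \tfrac{1}{\sqrt{2}}(e_i - e_i^*)$ (of norm $-1$) of $\E$, the action of $\K$ on $\E$ then reads $k \mapsto (k, k) \in \O(dn) \times \O(dn) \subset \O(dn, dn, \R)$, i.e.\ the diagonal embedding. Lemma \ref{trivialpullback}, applied with $\G = \O(dn, \R)$ and $\G' = \Pin(dn, \R)$, gives that the pull-back of $\K$ in $\Pin(dn, dn, \R)$ is a trivial extension, and Lemma \ref{redcomp} propagates this triviality back to $\GL(n, \mathbb{K})$, finishing the argument. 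The main (and rather minor) obstacle is the verification of the diagonal embedding when $\mathbb{K} = \mathbb{C}$ or $\mathbb{H}$: it amounts to the well-known inclusions $\U(n) \subset \O(2n)$ and $\Sp(n) \subset \O(4n)$ being compatible with the inner-product identification $\F \cong \F^*$, a routine but not entirely automatic check in the quaternionic case because of the noncommutativity of $\mathbb{H}$.
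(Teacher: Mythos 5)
Your proof is correct and follows essentially the same route as the paper: reduce to $m=1$ via Proposition \ref{Decomp}, pass to a maximal compact $\K \subset \GL(n,\mathbb{K})$ via Lemma \ref{redcomp}, change to the non-isotropic basis $e_i \pm e_i^*$ to realize $\K$ as a diagonal in $\O(dn,\R)\times\O(dn,\R)$, and conclude with Lemma \ref{trivialpullback}. The only point you flag but do not carry out---the $\K$-equivariance of $\F\cong\F^*$ in the quaternionic case---is exactly what the paper makes explicit via the identity $\iota_{\mathbb{K}}(\sigma_{\mathbb{K}}({}^tA)) = {}^t\iota_{\mathbb{K}}(A)$, which gives ${}^t\iota_{\mathbb{K}}(A)^{-1}=\iota_{\mathbb{K}}(A)$ for $A$ in the compact form; filling that in would make the two arguments identical.
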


\begin{proof}

Because of Proposition \ref{Decomp}, the result  will follow if we prove that the lift of $\GL(n,\mathbb{K})$ in $\Pin(n\dim_{\R}\mathbb{K},n\dim_{\R}\mathbb{K},\R)$ is trivial. 

The inclusion $f: \GL(n,\mathbb{K})\to \O(n\dim_{\R}\mathbb{K}, n\dim_{\R}\mathbb{K},\mathbb{R})$ is given by putting the usual quadratic form of signature $(n\dim_{\R}\mathbb{K}, n\dim_{\R}\mathbb{K})$ in $\mathbb{K}^n\oplus (\mathbb{K}^n)^*$. More precisely, if $\iota_{\K}:\GL(n,\mathbb{K})\to \GL(n\dim_{\R}\mathbb{K},\R)$ denotes the reelification of the matrix then the inclusion is defined by \[f(A)=\left(\begin{array}{c|c} \iota_{\K}(A)&   0 \\ \hline 0& \text{ } ^t\iota_{\K}(A)^{-1}\end{array} \right)\text{ for $A\in \GL(n,\mathbb{K})$.}\]

Let $\sigma_{\mathbb{K}}$ be the automorphism of $\GL(n,\mathbb{K})$ induced by the conjugation in $\mathbb{K}$ (i.e. the identity if $\mathbb{K}=\mathbb{R}$, the complex conjugation if $\mathbb{K}=\mathbb{C}$ or the quaternionic conjugation). One may directly check that 
\begin{equation}
\label{transpconj}
\iota_{\mathbb{K}}(\sigma_{\mathbb{K}}( \ ^tA))=\ ^t\iota_{\mathbb{K}}(A).
\end{equation}

Let $\K_{\mathbb{K}}$ be \[\{g\in \GL(n,\mathbb{K})\mid g\ ^t\sigma_{\mathbb{K}}(g)=I_n\}.\] It is maximal compact in $\GL(n,\mathbb{K})$. By Lemma \ref{redcomp}, we only need to show that the lift of $f(\K_{\mathbb{K}})$ is trivial. Let $A$ be in $\K_{\mathbb{K}}$, then
 \begin{align*}
^t\iota_{\K}(A)^{-1}&=^t\iota_{\K}(A^{-1})\\
&=\iota_{\mathbb{K}}(\sigma_{\mathbb{K}}( \ ^tA^{-1}))\text{ because of Equation \ref{transpconj}}\\
&=\iota_{\mathbb{K}}(A)\text{ because $A\in K_{\mathbb{K}}$.} \end{align*}

Whence, for  $A\in \K_{\mathbb{K}}$, we have 
\[f(A)=\left(\begin{array}{c|c} \iota_{\K}(A)& 0   \\ \hline 0& \text{ } \iota_{\K}(A)\end{array} \right).\]
Then, changing the basis of isotropic vectors $(\underline{e},\underline{e^*})$ to the basis $(\underline{e+e^*},\underline{e-e^*})$, it follows easily that $\K_{\mathbb{K}}$ is diagonally embedded via $f$ in some conjugate of $\O(n\dim_{\R}\mathbb{K},\ \mathbb{R})\times O(n\dim_{\mathbb{R}}\mathbb{K},\mathbb{R})$.  Applying Lemma \ref{trivialpullback}, the pull-back of $f(\K_{\mathbb{K}})$ in $\Pin(n\dim_{\R}\mathbb{K},n\dim_{\R}\mathbb{K},\mathbb{R})$ is trivial. Whence the result with Lemma \ref{redcomp}. 

\end{proof}

\begin{rema}

In Section \ref{SectionPreliminaries}, Corollary \ref{ClassificationDP}, we recalled the classification of dual pairs in the complex orthogonal group. The nature of the double covers can be obtained using the method we use in this section. 

Let $(\GL(n, \mathbb{C}), \GL(m, \mathbb{C}))$ be the dual pair in $\O(2nm, \mathbb{C})$. The nature of the preimage of $\GL(n, \mathbb{C})$ in $\Pin(2nm, \mathbb{C})$ can be determined by studying the nature of the preimage of $\GL(n, \mathbb{C})$ is $\Pin(2n, \mathbb{C})$. This double cover is uniquely determined by its restriction to $\U(n, \mathbb{C})$. The embedding of $\U(n, \mathbb{C})$ is $\O(2n, \mathbb{C})$ is included in $\SO(2n, \mathbb{C})$, and so the double cover is non-trivial and isomorphic to $\det^{\frac{1}{2}}$ (\cite[page~77, ~Remark~3.9]{MEI}). In particular, the double cover of $\GL(n, \mathbb{C})$ in $\Pin(2nm, \mathbb{C})$ is isomorphic to the $\det^{\frac{1}{2}}$-cover. Moreover, according to Lemma \ref{LemmaConnected}, the preimages of $\GL(n, \mathbb{C})$ and $\GL(m, \mathbb{C})$ commute in $\Pin(2nm, \mathbb{C})$ and then form a dual pair.

For the dual pair $(\G = \Sp(2n, \mathbb{C}), \G' = \Sp(2m, \mathbb{C}))$ in $\O(4nm, \mathbb{C})$, we have, according to Lemma \ref{LemmaConnected} and Corollary \ref{coroconsimcon}, that $(\tilde{\G}, \tilde{\G'})$ is a dual pair in $\Pin(4nm, \mathbb{C})$ and both covers are trivial.

Finally, the dual pair $(\G = \O(n, \mathbb{C}), \G' = \O(m, \mathbb{C}))$ in $\O(nm, \mathbb{C})$ is a consequence of \cite{SLU}. According to \cite[Theorem~3.4]{SLU}, if $n$ and $m$ are both odd, $(\tilde{\G}, \tilde{\G'})$ is a dual pair in $\Pin(nm, \mathbb{C})$ and if one of this two integers is even, then $(\tilde{\G}, \tilde{\G'_{0}})$ is a dual pair in $\Pin(nm, \mathbb{C})$, where $\tilde{\G'_{0}} = \pi^{-1}(\SO(m, \mathbb{C}))$. The nature of the covers of $\tilde{\G}$ and $\tilde{\G'}$ is obtained by complexification of the one obtained in \cite[Corollary~2.13]{SLU}.

\label{CoverComplexPairs}

\end{rema}

\begin{rema}

The double covers for the pair $(\widetilde{\GL(n, \mathbb{R})}, \widetilde{\GL(m, \mathbb{R})})$ in $\Pin(nm, nm, \mathbb{R})$ can be obtained via the pair $(\widetilde{\GL(n, \mathbb{C})}, \widetilde{\GL(m, \mathbb{C})})$ in $\O(2nm, \mathbb{C})$. The restriction of the $\det^{\frac{1}{2}}$-cover to $\GL(n, \mathbb{R})$ is trivial.

In this case, the nature of the double cover of $\GL(n, \mathbb{R})$ in $\O(n, n, \mathbb{R})$ was already known (one can check \cite[Equation~(2.3)]{HOW5}).

\end{rema}

\section{Duality for the spinorial representation}

\label{SectionDualityPin}

Let $(\E, \b)$ be an even dimensional vector space over $\mathbb{K} = \mathbb{R}$ or $\mathbb{C}$ and let $(\Pi, \S)$ be the corresponding spinorial representation of $\Pin(\E, \b)$ as constructed in Section 1. To simplify the notations, we denote by $\E_{\mathbb{C}}$ the complexification of $\E$ if $\E$ is real and $\E$ if $\E$ is already complex.

Let $\tilde{\Pi}: \Pin(\E, \b) \to \GL(\End(\S))$ be the representation we have by conjugation:
\begin{equation*}
\tilde{\Pi}(c)(X) = \Pi(c)X\Pi(c)^{-1}, \qquad (c \in \Pin(\E, \b), X \in \Cliff(\E, \b)),
\end{equation*}
and $\tilde{\rho}: \Pin(\E, \b) \to \GL(\Cliff(\E, \b))$ be the representation given by:
\begin{equation*}
\tilde{\rho}(c)(X) = cXc^{-1} \qquad (c \in \Pin(\E, \b), X \in \Cliff(\E, \b)).
\end{equation*}
Obviously, both $\Pin(\E, \b)$-modules $\tilde{\Pi}$ and $\tilde{\rho}$ are equivalent. We denote by $\tilde{\gamma}$ the isomorphism between $\Cliff(\E, \b)$ and $\End(\S)$ defined in Section \ref{SectionPreliminaries}. Then, for every subgroup $\G$ of $\O(\E, \b)$, we have:
\begin{equation*}
\tilde{\gamma}(\End(\S)^{\tilde{\G}}) = \Cliff(\E, \b)^{\tilde{\G}},
\end{equation*}

Let $\tilde{\rho_{_1}}$ be the natural action of $\Pin(\E, \b)$ on $\Lambda(\E)$ obtained by extension of the action $\pi$ of $\Pin(\E, \b)$ on $\E$ and consider the linear isomorphism (\cite[Section~5.3]{VAR}):
\begin{equation*}
\T: \Lambda(\E) \ni v_1\wedge \cdots \wedge v_k \to \frac{1}{k!}\sum_{\sigma\in \mathscr{S}_k} \epsilon(\sigma)v_{\sigma(1)}\cdots v_{\sigma(k)} \in \Cliff(\E, \b).
\end{equation*}

Let $c$ be in $\widetilde{\SO(\E, \b)}$ and $v_1\wedge \cdots\wedge v_k$ be in $\Lambda(\E)$. Then 

\begin{eqnarray*}
\T(\tilde{\rho_{1}}(c)(v_1\wedge \cdots\wedge v_k)) & = & \T(\pi(c)(v_1)\wedge\cdots\wedge \pi(c)(v_k)) = \sum_{\sigma \in \mathscr{S}_{k}} \epsilon(\sigma)\pi(c)(v_{\sigma(1)})\cdots \pi(c)(v_{\sigma(k)}) \\
                                                                                 & = & \sum_{\sigma \in \mathscr{S}_{k}} \epsilon(\sigma) cv_{\sigma(1)}c^{-1}\cdots cv_{\sigma(k)}c^{-1} = \sum_{\sigma \in \mathscr{S}_{k}}\epsilon(\sigma)cv_{\sigma(1)}\cdots v_{\sigma(k)}c^{-1}\\
                                                                                 & = & \tilde{\rho}(c)\T(v_1\wedge \cdots\wedge v_k)
\end{eqnarray*}
This means that $\T$ is an isomorphism of $\widetilde{\SO(\E, \b)}$-modules. It follows immediately that
\begin{equation}
\End(\S)^{\tilde{\G}} = \T \circ \tilde{\gamma}^{-1}(\Lambda(\E)^{\tilde{\G}}).
\label{TransferOfInvariants}
\end{equation}

\begin{rema}
\begin{enumerate}

\item For any irreducible representation $(\rho,V)$ of $\Pin(\E,\b)$ and any central element $c$ in $\Pin(\E,\b)$, $\rho(c)$ is scalar because of Schur's lemma. It follows that the conjugation action of $\Pin(\E,\b)$ on $\End(V)$ factors through an action of $\O(\E,\b)$. 
\item Similarly, let $\pr\tilde{\Pi}$, $\pr\tilde{\rho}$ and $\pr\tilde{\rho_{1}}$ the representation of $\G$ we obtain via $\tilde{\Pi}$, $\tilde{\rho}$ and $\tilde{\rho_{1}}$. Then, for all $g \in \SO(\E, \b)$, we have:
\begin{equation*}
\pr\tilde{\Pi}(g) = \pr\tilde{\rho}(g) = \pr\tilde{\rho_{1}}(g).
\end{equation*}
\end{enumerate}
\label{RemarkInvariants}
\end{rema}

We now explain precisely the method we use. Let $(\G, \G')$ be a reductive dual pair in $\O(\E, \b)$. According to Section 2, for the dual pairs we consider, the preimages $(\tilde{\G}, \tilde{\G}')$ in $\Pin(\E, \b)$ is a dual pair and $\G, \G' \subseteq \SO(\E, \b)$ (here, we don't considerer the dual pairs $(\G, \G') = (\O(\U, b_{\U}), \O(\V, b_{\V}))$, where $\U, \V$ are both real or complex orthogonal vector spaces, see Remark \ref{FinalRemark}). We want to prove that $(\Pi, \S)$ sets up a Howe correspondence for the pair $(\tilde{\G}, \tilde{\G}')$, i.e.
\begin{equation*}
\Pi = \bigoplus\limits_{\lambda \in \hat{\G}} \lambda \otimes \theta(\lambda),
\end{equation*}
where $\theta(\lambda)$ is an irreducible representation of $\tilde{\G}'$.

\begin{rema}

We keep the notations of the previous paragraph. We consider first the action of $\tilde{\G}$ on $\S$ and consider a decomposition of $\S$ as a direct sum of $\tilde{\G}$-isotypic component:
\begin{equation*}
\S = \bigoplus\limits_{\lambda \in \tilde{\G}_{\irr}} \V(\lambda).
\end{equation*}
Because $\tilde{\G}'$ commutes with $\tilde{\G}$, $\tilde{\G'}$ acts on $\V(\lambda)$. In particular, $\tilde{\G} \times \tilde{\G'}$ acts on $\V(\lambda)$ and then,
\begin{equation*}
\Pi = \bigoplus\limits_{\lambda \in \tilde{\G}_{\irr}} \lambda \otimes \theta(\lambda)
\end{equation*}
where $\theta(\lambda)$ is a representation of $\tilde{\G}'$, but non-necessary irreducible. The goal of this section is to prove the irreducibility of the representation $\theta(\lambda)$.

\end{rema}

According to the double commutant theorem (see \cite[Section~4.1]{GOD}), if 
\begin{equation*}
\Comm_{\End(\S)}\left<\tilde{\G}\right> = \left\{X \in \End(\S), \Pi(\tilde{g})X\Pi(\tilde{g})^{-1} = X \thinspace (\forall \tilde{g} \in \tilde{\G})\right\} =  \left<\tilde{\G}'\right>,
\end{equation*}
then, the representation $\theta(\lambda)$ of $\tilde{\G}'$ corresponding to $\lambda$ is irreducible (here, $\left<\tilde{\G}\right>$ is the $\mathbb{C}$-algebra generated by $\Pi(\tilde{g}), \tilde{g} \in \tilde{\G}$). But, using the notations of the previous remark, we get:
\begin{equation*}
\Comm_{\End(\S)}\left<\tilde{\G}\right> = \End(\S)^{(\tilde{\G}, \tilde{\Pi})}.
\end{equation*}
(here we add the representations $\tilde{\Pi}$ in the notations to avoid any confusion). Then, according to Equation \eqref{TransferOfInvariants} and Remark \ref{RemarkInvariants}, we have:
\begin{equation}
\Comm_{\End(\S)}\left<\tilde{\G}\right> = \T_{1} \circ \T^{-1}\left(\Lambda(\E_{\mathbb{C}})^{(\tilde{\G}, \tilde{\rho_{1}})}\right) = \T_{1} \circ \T^{-1}\left(\Lambda(\E_{\mathbb{C}})^{(\G, \pr\tilde{\rho_{1}})}\right).
\label{DoubleCommutantIsomorphism}
\end{equation}
In particular, for the dual pairs we consider, we will determine explicitly $\Lambda(\E)^{\G, \pr\rho_{1}}$. In \cite{HOW2}, R. Howe computed invariants of this form for some complex groups. It will be one of our key points to prove the duality for the spinorial representation. We recall the main ideas in this section (one can also check \cite{WAN}). It turns out that the invariants we need are, indirectly, a sort of consequence of the Schur-Weyl duality. 

Let $\G = \GL(V)$ where $V$ is a finite dimensional complex vector space. We have a natural action of $\G$ on $\V$, which can be extended to an action of $\G$ on $\V^{\otimes d}$ diagonally, i.e.
\begin{equation*}
g.(v_{1} \otimes \ldots \otimes v_{d}) = g(v_{1}) \otimes \ldots \otimes g(v_{d}).
\end{equation*}
Similarly, we have an action of the symmetric group $\mathscr{S}_{d}$ on $\V^{\otimes d}$ given by
\begin{equation*}
\sigma.(v_{1} \otimes \ldots \otimes v_{d}) = v_{\sigma^{-1}(1)} \otimes \ldots \otimes v_{\sigma^{-1}(d)},
\end{equation*}
and this action commutes with the action of $\G$. More particularly, we have the following decomposition:
\begin{equation*}
\V^{\otimes d} = \bigoplus\limits_{\D} \rho^{\D}_{\V} \otimes \sigma^{\D},
\end{equation*}
where $\D$ runs over the diagrams of size $d$ and depth at most $\dim_{\mathbb{C}}(\V)$.

A direct consequence of this duality is a $(\GL(\U), \GL(\V))$-duality on the spaces $\S(\U \otimes \V)$ and $\Lambda(\U \otimes \V)$. Indeed, we have
\begin{eqnarray*}
\Lambda^{d}(\U \otimes \V) & = & \left[(\U \otimes \V)^{d}\right]^{\sgn(d)} = \left[\U^{\otimes d} \otimes \V^{\otimes d}\right]^{\Delta(\sgn(d) \times \sgn(d))} \\
& = & \left[ \left( \sum\limits_{\D} \rho^{\D}_{\U} \otimes \sigma^{\D}\right) \otimes \left( \sum\limits_{\E} \rho^{\E}_{\V} \otimes \sigma^{\E}\right)\right]^{\Delta(\sgn(d) \times \sgn(d))} \\
& = & \sum\limits_{\D} \sum\limits_{\E} \rho^{\D}_{\U} \otimes \rho^{\E}_{\V} \otimes \left(\sigma^{\D} \otimes \sigma^{\E}\right)^{\Delta(\sgn(d) \times \sgn(d))} \\
& = & \sum\limits_{\D} \rho^{\D}_{\U} \otimes \rho^{\D'}_{\V} 
\end{eqnarray*}
where $\sgn(d)$ is the action of $\mathscr{S}_{d}$ on $\V^{\otimes d}$ via the signature, $\D$ runs over the diagrams of size $d$ such that depth at most $\dim_{\mathbb{C}}(\U)$ and $\D'$ is associated to $\D$ as constructed in \cite[Annexe~A.29]{WAN}.

Using that result, one can get the $\GL(\V)$-invariants in the space $\Lambda(\V \otimes \U \oplus \V^{*} \otimes \W)$. Obviously, fixing a basis $\{v_{1}, \ldots, v_{n}\}$ of $\V$, $\{v^{*}_{1}, \ldots, v^{*}_{n}\}$ dual basis of $\V^{*}$, $\{u_{1}, \ldots, u_{m}\}$ of $\U$ and $\{w_{1}, \ldots, w_{l}\}$ of $\W$, the elements $\lambda_{a, b}$ given by
\begin{equation}
\lambda_{a, b} = \sum\limits_{k=1}^{n} v_{k} \otimes u_{a} \wedge v^{*}_{k} \otimes w_{b}
\label{InvariantGL(V)}
\end{equation} 
are $\GL(\V)$-invariants in $\Lambda(\V \otimes \U \oplus \V^{*} \otimes \W)$. According to R. Howe, all the invariants can be obtained using those elements $\lambda_{a, b}$.

\begin{theo}[\cite{HOW2}]

The algebra $\Lambda(\V \otimes \U \oplus \V^{*} \otimes \W)^{\GL(\V)}$ is generated by the degree $2$ elements $\lambda_{a, b}$ given in equation \eqref{InvariantGL(V)}.
\label{HoweGL}

\end{theo}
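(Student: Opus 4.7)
The plan is to leverage the exterior Schur--Weyl duality recalled immediately before the theorem, together with the Cauchy identity for the symmetric algebra $\S(\U\otimes\W)$, and thereby reduce the theorem to a surjectivity statement for an explicit algebra homomorphism. First, I would identify
\begin{equation*}
\Lambda(\V\otimes\U\oplus \V^{*}\otimes\W)\;\simeq\; \Lambda(\V\otimes\U)\otimes\Lambda(\V^{*}\otimes\W)
\end{equation*}
as a $\GL(\V)\times\GL(\U)\times\GL(\W)$-module. Applying the $(\GL(\V),\GL(\U))$-exterior duality to each tensor factor, using $\rho^{\E}_{\V^{*}}\simeq (\rho^{\E}_{\V})^{*}$, and then taking $\GL(\V)$-invariants via Schur's lemma, yields the canonical decomposition
\begin{equation*}
\Lambda(\V\otimes\U\oplus \V^{*}\otimes\W)^{\GL(\V)}\;\simeq\;\bigoplus_{\D} \rho^{\D'}_{\U}\otimes\rho^{\D'}_{\W}
\end{equation*}
as a $\GL(\U)\times\GL(\W)$-module, where $\D$ runs over diagrams whose depth is at most $\dim\V$ and whose transpose has depth at most $\min(\dim\U,\dim\W)$.

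Next I would set up the comparison with the symmetric algebra. The elements $\lambda_{a,b}$ of Equation~\eqref{InvariantGL(V)} span, as $a$ and $b$ vary, a copy of $\U\otimes\W$ inside the degree two $\GL(\V)$-invariants. Being of even exterior degree they commute pairwise, so the assignment $u_{a}\otimes w_{b}\mapsto \lambda_{a,b}$ extends to a $\GL(\U)\times\GL(\W)$-equivariant algebra morphism
\begin{equation*}
\Phi\colon \S(\U\otimes\W)\longrightarrow \Lambda(\V\otimes\U\oplus \V^{*}\otimes\W)^{\GL(\V)}.
\end{equation*}
The Cauchy identity $\S(\U\otimes\W)=\bigoplus_{\D}\rho^{\D}_{\U}\otimes\rho^{\D}_{\W}$ exhibits the source as a multiplicity-free sum of all $\GL(\U)\times\GL(\W)$-types with depth at most $\min(\dim\U,\dim\W)$, while after the reindexing $\D\mapsto\D'$ the target is precisely the sub-sum of those types whose width is at most $\dim\V$. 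The statement to prove therefore becomes that $\Phi$ hits every admissible isotypic component of the target.

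The main obstacle is exactly this surjectivity. My plan for it is a highest weight argument: for each admissible diagram $\D$, I would produce an explicit polynomial expression in the $\lambda_{a,b}$ whose image is a non-zero $\GL(\U)\times\GL(\W)$ highest weight vector of type $\rho^{\D'}_{\U}\otimes\rho^{\D'}_{\W}$ in $\Lambda(\V\otimes\U\oplus \V^{*}\otimes\W)^{\GL(\V)}$; multiplicity-freeness together with Schur's lemma then forces the entire isotypic component to lie in the image. After choosing a basis of $\V$, non-vanishing of these vectors reduces to that of certain signed determinantal expressions, which can be read off directly from the formula $\lambda_{a,b}=\sum_{k} v_{k}\otimes u_{a}\wedge v^{*}_{k}\otimes w_{b}$ together with the graded-commutation rules of the exterior algebra. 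The extraneous components of the source, namely those corresponding to diagrams whose transpose has width exceeding $\dim\V$, must map to zero by the target's description, and they in fact generate the kernel of $\Phi$ (this being the second fundamental theorem, and not needed for the stated result).
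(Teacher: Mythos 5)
Your proposal is correct and follows essentially the same route as the paper: decompose $\Lambda(\V\otimes\U\oplus\V^{*}\otimes\W)^{\GL(\V)}$ as a $\GL(\U)\times\GL(\W)$-module via exterior Schur--Weyl duality, then establish generation by the $\lambda_{a,b}$ through a highest-weight argument, a step which both you and the paper ultimately delegate to Howe. Your version is in fact slightly more careful on one point: since the $\lambda_{a,b}$ have even exterior degree and commute, the generation map naturally has source $\S(\U\otimes\W)$ with its Cauchy decomposition $\bigoplus_{\D}\rho^{\D}_{\U}\otimes\rho^{\D}_{\W}$, consistent with your isotypic description $\bigoplus_{\D}\rho^{\D'}_{\U}\otimes\rho^{\D'}_{\W}$ of the target (a width-truncated sub-sum of the Cauchy decomposition), whereas the paper's Equation \eqref{GLInvProof} arrives at $\sum_{\D}\rho^{\D}_{\U}\otimes\rho^{\D'}_{\W}$ and identifies it with $\Lambda(\U\otimes\W)$; taking $\GL(\V)$-invariants actually pairs equal diagrams on the $\V$ and $\V^{*}$ tensor factors, so the transpose must appear on both the $\U$ and the $\W$ side, and the comparison object is $\S(\U\otimes\W)$, not $\Lambda(\U\otimes\W)$, exactly as you wrote.
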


One of the key part of the proof is to understand the structure of $\Lambda(\V \otimes \U \oplus \V^{*} \otimes \W)^{\GL(\V)}$ as a $\GL(\U) \times \GL(\W)$-module. And this is a direct consequence of the duality we had previously. Indeed,
\begin{eqnarray}
\Lambda(\V \otimes \U \oplus \V^{*} \otimes \W)^{\GL(\V)} & = & \left(\Lambda(\V \otimes \U) \otimes \Lambda(\V^{*} \otimes \W)\right)^{\GL(\V)} = \left(\left(\sum\limits_{\D} \rho^{\D}_{\V} \otimes \rho^{\D}_{\U}\right) \otimes \left(\sum\limits_{\E} \rho^{\E}_{\V} \otimes \rho^{\E}_{\W}\right)\right)^{\GL(\V)} \nonumber \\
           & = & \sum\limits_{\D} \sum\limits_{\E} (\rho^{\D}_{\V} \otimes \rho^{\E}_{\V})^{\GL(\V)} \otimes \rho^{\D}_{\U} \otimes \rho^{\E}_{\W} = \sum\limits_{\D} \rho^{\D}_{\U} \otimes \rho^{\D^{'}}_{\W} \label{GLInvProof}
\end{eqnarray}
We remark that the last equation is exactly the decomposition of $\Lambda(\U \otimes \W)$ as a $\GL(\U) \times \GL(\W)$-module.

The rest of the proof is to justify that the highest weight of the modules appearing in Equation \ref{GLInvProof} can be obtained using the elements $\lambda_{a, b}$ (one can check \cite[Proof~of~Theorem~2.2.2]{HOW2}).

Other consequences of this $(\GL(\U), \GL(\V))$-duality in $\Lambda(\U \otimes \V)$ is the explicit description of the algebras $\Lambda(\U \otimes \V)^{\O(\U)}$ and $\Lambda(\U \otimes \V)^{\Sp(\U)}$.

Indeed, let $\{u_{1}, \ldots, u_{n}\}$ be a basis of $\U$ and $\{v_{1}, \ldots, v_{m}\}$ be a basis of $\V$. Then, 
\begin{equation*}
\eta_{a, b} = \sum\limits_{k = 1}^{n} u_{k} \otimes v_{a} \wedge u_{k} \otimes v_{b}
\end{equation*}
are elements of $\Lambda(\U \otimes \V)^{\O(\U)}$. 

\begin{theo}[\cite{HOW2}]

The elements $\eta_{a, b}$ generate the algebra $\Lambda(\U \otimes \V)^{\O(\U)}$. In particular, the algebra $\Lambda(\U \otimes \V)^{\O(\U)}$ is generated by degree $2$ elements.
\label{HoweO}
\end{theo}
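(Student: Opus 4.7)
The plan is to adapt the strategy of Theorem \ref{HoweGL} to the orthogonal setting. Set $m = \dim_{\mathbb{C}} \V$. First I would observe that $\eta_{a,b}$ is the image of $B \otimes (v_a \wedge v_b)$ under the inclusion $\S^2(\U) \otimes \Lambda^2(\V) \hookrightarrow \Lambda^2(\U \otimes \V)$, where $B = \sum_k u_k \otimes u_k$ is the $\O(\U)$-invariant element of $\S^2(\U)$ dual to the bilinear form on $\U$. In particular $\eta_{b,a} = -\eta_{a,b}$, $\eta_{a,a} = 0$, and (being of even total degree) the $\eta_{a,b}$ commute pairwise in $\Lambda(\U \otimes \V)$, so the subalgebra they generate is commutative and lies in $\Lambda(\U \otimes \V)^{\O(\U)}$.

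The main tool is a Howe-style duality for the pair $(\O(\U), \mathfrak{g})$ acting on $\Lambda(\U \otimes \V)$, where $\mathfrak{g}$ is the Lie algebra of $\O(\U)$-invariant operators. By a commutator computation, $\mathfrak{g}$ should be generated by the raising operators given by wedging with the $\eta_{a,b}$ (degree $+2$), the lowering operators obtained as their contraction duals via the bilinear form on $\U$ (degree $-2$), together with a copy of $\mathfrak{gl}(\V)$ and a central degree element. The resulting Lie algebra is isomorphic to $\mathfrak{o}(2m, \mathbb{C})$, and the corresponding Howe duality then gives a multiplicity-free decomposition
\[\Lambda(\U \otimes \V) \cong \bigoplus_\lambda \tau_\lambda \otimes \sigma_\lambda\]
as an $\O(\U) \times \mathfrak{g}$-module, with $\tau_\lambda$ irreducible over $\O(\U)$ and $\sigma_\lambda$ the corresponding irreducible highest weight $\mathfrak{g}$-module.

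Extracting $\O(\U)$-invariants singles out the unique summand in which $\tau_\lambda$ is trivial; this is an irreducible lowest-weight $\mathfrak{g}$-module with lowest-weight vector $1 \in \Lambda^0$ annihilated by the lowering operators. The module is therefore generated from $1$ by the raising operators, i.e.\ by multiplications by the $\eta_{a,b}$, which proves that the $\eta_{a,b}$ generate $\Lambda(\U \otimes \V)^{\O(\U)}$. The main obstacle will be the identification of the commutant of $\O(\U)$ in $\End(\Lambda(\U \otimes \V))$ with $U(\mathfrak{o}(2m, \mathbb{C}))$; this is the first fundamental theorem for the orthogonal group in its exterior version, and I would either reproduce the explicit commutator identities between the $\eta_{a,b}$, their contraction duals, and the $\mathfrak{gl}(\V)$-operators, or invoke the argument of \cite{HOW2} for this step.
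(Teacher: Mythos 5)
The paper does not supply an argument here; it defers entirely to Howe's Theorem 4.3.2 in \cite{HOW2}. Your sketch is therefore a reconstruction of that reference rather than an alternative to anything in this paper, and the reconstruction is essentially right: $\Lambda(\U\otimes\V)$ carries a skew (fermionic) Howe duality between $\O(\U)$ and a Lie algebra $\mathfrak{g}$ of $\O(\U)$-invariant operators isomorphic to $\mathfrak{o}(2m,\mathbb{C})$, and the space of $\O(\U)$-invariants is an irreducible lowest-weight $\mathfrak{g}$-module with lowest-weight vector $1\in\Lambda^0$.

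Two small points to tighten. First, in your description of $\mathfrak{g}$ you list ``a copy of $\mathfrak{gl}(\V)$ and a central degree element'' in addition to the $\binom{m}{2}$ raising and $\binom{m}{2}$ lowering operators; this would overcount, since the degree operator is (up to an additive constant) the central element $\sum_a E_{aa}$ already inside $\mathfrak{gl}(\V)$, not an independent generator. With that understood, $m^2+2\binom{m}{2}=m(2m-1)=\dim\mathfrak{o}(2m,\mathbb{C})$ comes out exactly. Second, ``generated from $1$ by the raising operators'' should really be ``generated from $1$ by $U(\mathfrak{n}^+)$,'' where $\mathfrak{n}^+$ also contains the strictly upper-triangular part of $\mathfrak{gl}(\V)$; to conclude that this lands in the subalgebra generated by the $\eta_{a,b}$ one should add that $\mathfrak{gl}(\V)$ acts on $\Lambda(\U\otimes\V)$ by derivations and that the linear span of the $\eta_{a,b}$ is a $\mathfrak{gl}(\V)$-submodule (a copy of $\Lambda^2\V$), so the subalgebra it generates is $\mathfrak{gl}(\V)$-stable. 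The step you flag as the main obstacle --- identifying the full commutant of $\O(\U)$ in $\End(\Lambda(\U\otimes\V))$ with the image of $U(\mathfrak{o}(2m,\mathbb{C}))$ --- is indeed where the real content of the theorem lives and is precisely what Howe's cited proof establishes.
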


\begin{proof}

See \cite[Theorem~4.3.2]{HOW2}.

\end{proof}

Similarly, let's consider the space $\mathbb{C}^{2n} \otimes \mathbb{C}^{m}$ and let $\Sp(2n, \mathbb{C})$ be the complex symplectic group. We consider a symplectic basis $\{e_{1}, \ldots, e_{n}, f_{1}, \ldots, f_{n}\}$ of $\mathbb{C}^{2n}$ and let $\{x_{1}, \ldots, x_{m}\}$ be a basis of $\mathbb{C}^{m}$. The elements $\gamma_{a, b}$ given by:
\begin{equation*}
\gamma_{a, b} = \sum\limits_{k=1}^{n} e_{k} \otimes x_{a} \wedge f_{k} \otimes x_{b} - f_{k} \otimes x_{a} \wedge e_{k} \otimes x_{b}
\end{equation*}
are in $\Lambda(\mathbb{C}^{2n} \otimes \mathbb{C}^{m})^{\Sp(2n, \mathbb{C})}$. R. Howe proved the following result.

\begin{theo}[\cite{HOW2}]

The elements $\gamma_{a, b}$ generate the algebra $\Lambda(\mathbb{C}^{2n} \otimes \mathbb{C}^{m})^{\Sp(2n, \mathbb{C})}$.
\label{HoweSp}
\end{theo}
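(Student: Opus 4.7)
The plan is to adapt the strategy used for Theorems \ref{HoweGL} and \ref{HoweO}, with $\Sp(2n, \mathbb{C})$ playing the role that $\GL(\V)$ or $\O(\U)$ played there. The starting point is the $(\GL(2n, \mathbb{C}), \GL(m, \mathbb{C}))$-duality on $\Lambda(\mathbb{C}^{2n} \otimes \mathbb{C}^m)$ recalled earlier in this section:
\begin{equation*}
\Lambda(\mathbb{C}^{2n} \otimes \mathbb{C}^m) = \bigoplus_{\D} \rho^{\D}_{\mathbb{C}^{2n}} \otimes \rho^{\D'}_{\mathbb{C}^m}.
\end{equation*}
Restricting from $\GL(2n, \mathbb{C})$ to $\Sp(2n, \mathbb{C})$ and applying the classical Littlewood branching rule, the multiplicity of the trivial $\Sp(2n, \mathbb{C})$-representation in $\rho^{\D}_{\mathbb{C}^{2n}}$ is $1$ precisely when $\D' = 2\mu$ for some partition $\mu$ (equivalently, $\D$ has all columns of even height), and $0$ otherwise. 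This produces the $\GL(m, \mathbb{C})$-module decomposition
\begin{equation*}
\Lambda(\mathbb{C}^{2n} \otimes \mathbb{C}^m)^{\Sp(2n, \mathbb{C})} = \bigoplus_{\mu} \rho^{2\mu}_{\mathbb{C}^m},
\end{equation*}
the sum being over partitions $\mu$ fitting in the box of width $n$ and depth $m$.

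Next, I verify directly that each $\gamma_{a,b}$ lies in $\Lambda^{2}(\mathbb{C}^{2n} \otimes \mathbb{C}^m)^{\Sp(2n, \mathbb{C})}$: the symplectic form $\omega := \sum_{k=1}^{n} e_{k} \wedge f_{k} \in \Lambda^{2}(\mathbb{C}^{2n})$ is $\Sp(2n, \mathbb{C})$-invariant, and the $\gamma_{a,b}$ are the images of $\omega$ under contraction against the symmetric tensors $x_a \cdot x_b \in S^{2}(\mathbb{C}^{m})$. A short exterior-algebra calculation gives the symmetry identity $\gamma_{a,b} = \gamma_{b,a}$, so the span of the $\gamma_{a,b}$'s is a $\GL(m, \mathbb{C})$-submodule of the invariants isomorphic to $S^{2}(\mathbb{C}^{m})$. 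Since the $\gamma_{a,b}$'s have even wedge-degree, they commute pairwise, and the subalgebra $\mathcal{A}$ they generate is therefore a $\GL(m, \mathbb{C})$-stable quotient of $S^{\bullet}(S^{2}(\mathbb{C}^{m}))$. Crucially, this ambient algebra decomposes as $S^{\bullet}(S^{2}(\mathbb{C}^{m})) = \bigoplus_{\mu} \rho^{2\mu}_{\mathbb{C}^m}$, with exactly the same indexing set $\mu$ as in the invariant decomposition above.

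It remains to show that $\mathcal{A}$ coincides with the full invariant algebra rather than being a proper $\GL(m, \mathbb{C})$-submodule. I would exhibit, for every partition $\mu$ occurring in the decomposition, a non-zero highest-weight vector of weight $2\mu$ inside $\mathcal{A}$, constructed as a polynomial in the $\gamma_{a,b}$'s; a product of principal minors of the symmetric matrix $[\gamma_{a,b}]$ is the natural ansatz, in analogy with the determinantal construction used by Howe for the orthogonal case. Once non-vanishing of these candidates is verified, the equality $\mathcal{A} = \Lambda(\mathbb{C}^{2n} \otimes \mathbb{C}^m)^{\Sp(2n, \mathbb{C})}$ follows at once by comparing $\GL(m, \mathbb{C})$-isotypic components with the decomposition above. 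The main obstacle is this explicit highest-weight construction and the accompanying non-vanishing argument; it parallels \cite[Section~4]{HOW2} and can be carried out by exchanging the roles of symmetric and antisymmetric tensors relative to the orthogonal treatment.
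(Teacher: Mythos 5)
The paper does not actually prove this statement: its ``proof'' consists of the single line ``See \cite[Theorem~3.7.8.2]{HOW2}.'' So you are reconstructing an argument the authors delegated to Howe, and your outline does follow the right general shape (it parallels the sketch the paper gives for Theorem~\ref{HoweGL} in Equation~\eqref{GLInvProof}). However, there is an outright error in one of your claims. You assert that $S^{\bullet}(S^{2}(\mathbb{C}^{m})) = \bigoplus_{\mu} \rho^{2\mu}_{\mathbb{C}^m}$ ``with exactly the same indexing set $\mu$ as in the invariant decomposition above.'' This is false: in $S^{\bullet}(S^{2}(\mathbb{C}^{m}))$ the parameter $\mu$ ranges over \emph{all} partitions of depth at most $m$, an infinite set, whereas in $\Lambda(\mathbb{C}^{2n}\otimes\mathbb{C}^m)^{\Sp(2n,\mathbb{C})}$ the constraint $\ell(\D)\le 2n$ forces the additional bound $\mu_1\le n$, a finite set. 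The natural surjection $S^{\bullet}(S^{2}(\mathbb{C}^{m})) \twoheadrightarrow \mathcal{A}$ therefore has a large kernel in general, and the two decompositions do not match. (Already $n=1$, $m=2$ gives a $5$-dimensional invariant algebra versus an infinite-dimensional polynomial algebra.) This does not wreck your strategy, since all you need is that $\mathcal{A}$ is a multiplicity-free sum of $\rho^{2\mu}$'s contained in a multiplicity-free sum of $\rho^{2\mu}$'s, but as phrased the claim is wrong and would shortcut past the part that actually needs proving.

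The genuine gap, which you flag yourself, is exactly the part you defer: producing, for each $\mu$ with $\mu_1\le n$ and $\ell(\mu)\le m$, a nonzero highest weight vector of weight $2\mu$ in $\mathcal{A}$ and verifying that the proposed products of leading minors of the matrix $[\gamma_{a,b}]$ do not vanish in the exterior algebra. That is the content of Howe's Theorem~3.7.8.2 and it is nontrivial; without it the argument establishes only the containment $\mathcal{A}\subseteq \Lambda(\mathbb{C}^{2n}\otimes\mathbb{C}^m)^{\Sp(2n,\mathbb{C})}$ and a bound on what $\mathcal{A}$ could be. A smaller point worth noting: the Littlewood branching rule you invoke for the trivial multiplicity is usually stated in the stable range $\ell(\D)\le n$, whereas here $\D$ can have depth up to $2n$; the formula you use for trivial multiplicities does survive outside the stable range, but one should either cite that fact explicitly or compute the invariants by a route that avoids branching, as Howe does.
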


\begin{proof}

See \cite[Theorem~3.7.8.2]{HOW2}.

\end{proof}

We can first deduce the duality for the dual pairs in the complex orthogonal group.

\begin{prop}

Let $(\G, \G') \in \O(\E,b)$ one of the complex irreducible reductive dual pair given in Corollary \ref{ClassificationDP} except $(\G = \O(n, \mathbb{C}), \G' = \O(m, \mathbb{C}))$ (see Remark \ref{FinalRemark} for these dual pairs) . Let $(\Pi, \S)$ be the corresponding representation of $\Pin(\E, \b)$. Then, $\S$ sets up a Howe correspondence for the dual pair $(\tilde{\G}, \tilde{\G'})$.
\label{ComplexPairs}
\end{prop}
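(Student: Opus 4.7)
The plan is to reduce the claim to the double commutant theorem via Equation \eqref{DoubleCommutantIsomorphism} and then invoke Howe's invariant-theoretic results (Theorems \ref{HoweGL} and \ref{HoweSp}). By the discussion preceding the statement, it suffices to prove
\begin{equation*}
\Comm_{\End(\S)}\langle \tilde{\G}\rangle = \langle \tilde{\G}'\rangle,
\end{equation*}
and by \eqref{DoubleCommutantIsomorphism} together with Remark \ref{RemarkInvariants}, this amounts to showing that the image of $\Lambda(\E)^{\G}$ under $\T_{1}\circ \T^{-1}$ is exactly the associative algebra generated by $\tilde{\G}'$ inside $\End(\S)$. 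The remaining two complex dual pairs are treated separately.

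For the pair $(\G, \G') = (\Sp(2n,\C), \Sp(2m,\C))$, we have $\E = \C^{2n}\otimes \C^{2m}$ with the symmetric form obtained as the tensor of the two symplectic forms, and the action of $\G$ on $\E$ is the first tensor factor. Theorem \ref{HoweSp} (applied with $\U = \C^{2n}$ and $\V = \C^{2m}$) asserts that $\Lambda(\E)^{\G}$ is generated as an algebra by the degree two invariants $\gamma_{a,b}$. Under the map $\T^{-1}$ these land in $\Cliff^{2}(\E, \b)$, and via the standard identification $\Cliff^{2}(\E, \b)\cong \mathfrak{so}(\E, \b)$ (embedded in the Clifford algebra) they span precisely the image of $\mathfrak{sp}(2m,\C) = \Lie(\G')$ inside $\mathfrak{spin}(\E, \b)$: indeed, $\mathfrak{sp}(2m,\C) \cong S^{2}(\C^{2m})$ via the symplectic form, and this identification matches the $\gamma_{a,b}$'s with a basis of symmetric tensors. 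Since $\tilde{\G}'$ is connected ($\Sp(2m,\C)$ being connected), it is generated by the exponentials of $\Lie(\tilde{\G}')$, and therefore the associative algebra it spans in $\End(\S)$ coincides with the associative algebra generated by $\tilde{\gamma}(\T(\mathfrak{sp}(2m,\C)))$, which is exactly $\T_{1}\circ\T^{-1}(\Lambda(\E)^{\G})$.

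For the pair $(\G, \G') = (\GL(n,\C), \GL(m,\C))$, we write $\E = \E_{1}\oplus \E_{1}^{*}$ with $\E_{1} = \F_{1}\otimes \F_{2}$ in the notation of Proposition \ref{PropositionDP}, so that $\G = \GL(\F_{1})$ acts diagonally. Applying Theorem \ref{HoweGL} with $\V = \F_{1}$, $\U = \F_{2}$ and $\W = \F_{2}^{*}$, the algebra $\Lambda(\E)^{\G}$ is generated by the elements $\lambda_{a,b}$ of \eqref{InvariantGL(V)}. Transferring them via $\T_{1}\circ \T^{-1}$ and unwinding the identification of quadratic Clifford elements with $\mathfrak{so}(\E,\b)$, the images span precisely the diagonal copy of $\mathfrak{gl}(\F_{2}) = \Lie(\G')$ sitting in $\mathfrak{so}(\E_{1}\oplus \E_{1}^{*})$ through the action on $\F_{2}$ and its contragredient on $\F_{2}^{*}$. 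Again $\tilde{\G}'$ is connected, so its associative algebra inside $\End(\S)$ is generated by $\Lie(\tilde{\G}')$, and the desired equality $\Comm_{\End(\S)}\langle \tilde{\G}\rangle = \langle \tilde{\G}'\rangle$ follows. The double commutant theorem then yields the Howe correspondence.

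The main obstacle I expect is the explicit identification of the degree two generators $\gamma_{a,b}$ and $\lambda_{a,b}$ of Howe with the natural embedding of $\Lie(\G')$ into $\mathfrak{so}(\E, \b) \cong \T(\Lambda^{2}(\E))$; this is a bookkeeping computation involving the chosen bases and the symmetrization map $\T$, but once carried out it immediately matches associative algebras on both sides because of connectedness of $\tilde{\G}'$. A minor additional check is that the symmetric bilinear form on $\E$ really is non-degenerate in each case (so that the Clifford picture applies without modification), which is automatic from the choice of $\b$ in Corollary \ref{ClassificationDP}.
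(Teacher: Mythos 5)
Your proposal is correct and follows essentially the same route as the paper: reduce to the double commutant condition via \eqref{DoubleCommutantIsomorphism}, compute $\Lambda(\E)^{\G}$ via Theorems \ref{HoweGL} and \ref{HoweSp}, identify the span of Howe's degree-two generators with $\Lie(\G')$, and conclude using connectedness of $\tilde{\G}'$. The only difference is cosmetic: you spell out the connectedness step and the matching of $\gamma_{a,b}$, $\lambda_{a,b}$ with $\Lie(\G')$ a bit more explicitly than the paper, which leaves these points implicit in the notation $\langle\cdot\rangle$.
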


\begin{proof}

As explained previously, we have to prove that $\Comm_{\End(\S)}\langle\tilde{\G}\rangle = \langle\tilde{\G'}\rangle$ and according to equation \eqref{DoubleCommutantIsomorphism}, one way is to determine $\Lambda(\E)^{\G}$.

If $(\G = \GL(\U), \G' = \GL(\V))$, then $\E = \U \otimes \V \oplus \U^{*} \otimes \V^{*}$ and according to theorem \ref{HoweGL}, we have:
\begin{eqnarray*}
\Lambda(\E)^{\GL(\U)} & = & \Lambda(\U \otimes \V \oplus \U^{*} \otimes \V^{*})^{\GL(\U)} = \left<\left((\U \otimes \V) \otimes (\U \otimes \V)^{*}\right)^{\GL(\U)}\right> \\
                                     & = & \left< \mathfrak{gl}(\U \otimes \V)^{\GL(\U)} \right> = \left<\mathfrak{gl}(\V)\right>
\end{eqnarray*}
where $\left< \cdot\right>$ is the subalgebra of $\Lambda(\U \otimes \V \oplus \U^{*} \otimes \V^{*})$ generated by $\mathfrak{gl}(\V)$. Via the map $\T$ and $\tilde{\gamma}$, $\Lambda(\U \otimes \V \oplus \U^{*} \otimes \V^{*})$ corresponds to the subalgebra of $\End(\S)$ generated by the operators $d\Pi(X), X \in \mathfrak{gl}(\V)$.

If $(\G, \G') = (\Sp(\U, \b_{\U}), \Sp(\V, \b_{\V}))$, then, $\E = \U \otimes \V$ and according to Theorem \ref{HoweSp}, we have:
\begin{eqnarray*}
\Lambda(\E)^{\G} & = & \Lambda(\U \otimes \V)^{\G} = \left< \Lambda^{2}(\U \otimes \V)^{\G}\right> \\
                             & = & \left< \mathfrak{so}(\U \otimes \V)^{\G} \right> = \left<\mathfrak{sp}(\V, \b_{\V})\right>
\end{eqnarray*}                                    

\end{proof}

We now apply the same method for the real orthogonal group. The main difference in that case is that the members of the dual pairs we deal with are real algebraic groups. One of the main point is to complexify these groups. If $\G$ is a real algebraic group, the complexification of $\G$ is a pair $(\G_{\mathbb{C}}, \gamma)$, where $\G$ is a complex Lie group and $\gamma: \G \to \G_{\mathbb{C}}$ is a $\mathbb{R}$-morphism analytic map such that for every complex Lie group $\H$ and every $\mathbb{R}$-morphism $\phi: \G \to \H$, there exists a unique $\mathbb{C}$-analytic map $\psi: \G_{\mathbb{C}} \to \H$ such that $\phi = \psi \circ \gamma$ (one can check \cite[Chapter~3,~6.10]{BOU}).

Let $(\G, \G')$ be an irreducible dual pair in $\O(\E, \b)$. Let $\O(\E_{\mathbb{C}}, b_{\mathbb{C}})$ be the corresponding complexification and $(\Pi, \S)$ be the corresponding spinorial representation. Using Equation \eqref{TransferOfInvariants}, we identify the algebra $\Comm_{\End(\S)}(\tilde{\G})$ with the space $\Lambda(\E_{\mathbb{C}})^{\G}$, and because $\E_{\mathbb{C}}$ is a complex vector space, we have:
\begin{equation*}
\Lambda(\E_{\mathbb{C}})^{\G} = \Lambda(\E_{\mathbb{C}})^{\G_{\mathbb{C}}},
\end{equation*} 
where the action of $\G_{\mathbb{C}}$ is uniquely defined by definition of the complexification.

We recall quickly some complexifications. Let $\G = \GL(n, \mathbb{C})$. As a real Lie group, we have:
\begin{equation*}
\GL(n, \mathbb{C}) = \left\{g \in \GL(2n, \mathbb{R}), gJ = Jg\right\}, \qquad J = \begin{pmatrix} 0 & \Id_{n} \\ -\Id_{n} & 0 \end{pmatrix}.
\end{equation*}
We have $J = PDP^{-1}$, where $D = \diag(i\Id_{n}, -i\Id_{n})$. The complexification of $\GL(n, \mathbb{C})$ is given by:
\begin{equation*}
\GL(n, \mathbb{C})_{\mathbb{C}} = \left\{g \in \GL(2n, \mathbb{C}), gJ = Jg\right\} = \left\{g \in \GL(2n, \mathbb{C}), P^{-1}gPJ = JP^{-1}gP\right\}
\end{equation*}
In a suitable basis, we get that $P^{-1}gP$ is diagonal by blocks of the form $\diag(X_{1}, X_{2})$, where $X_{1}, X_{2} \in \GL(n, \mathbb{C})$. Then,
\begin{equation*}
\GL(n, \mathbb{C})_{\mathbb{C}} = \GL(n, \mathbb{C}) \times \GL(n, \mathbb{C}) \qquad \gamma(g) = (g, \bar{g}).
\end{equation*}
More generally, we have:
\begin{equation*}
\Sp(\V, \b)_{\mathbb{C}} = \Sp(\V, \b) \times \Sp(\V, \b) \qquad \O(\V, b)_{\mathbb{C}} = \O(\V, \b) \times \O(\V, b).
\end{equation*}

\begin{rema}

\begin{enumerate}
\item Let $\V$ be a complex vector space and $\G = \GL(\V), \Sp(\V)$ or $\O(\V)$. We denote by $\Pi: \G \to \GL(V)$ be the natural action of $\G$ on $\V$. The extension of $\Pi$ to $\G_{\mathbb{C}}$, denoted by $\Pi_{\mathbb{C}}$, is given by:
\begin{equation*}
\Pi_{\mathbb{C}}(g_{1}, g_{2}) = \Pi(g_{1}).
\end{equation*}
\item Let $\V$ be a complex vector space. We denote by $\bar{\V}$ the space complex vector space such that
\begin{itemize}
\item $\V = \bar{\V}$,
\item For each $\lambda \in \mathbb{C}$ and $\bar{v} \in \bar{\V}$, the vector $\lambda \bar{v}$ is equal to the vector $\bar{\lambda}v$.
\end{itemize}
We denote by $\bar{\Pi}$ the natural action of $\G$ on $\bar{\V}$ given by:
\begin{equation*}
\bar{\Pi}(g)(\bar{v}) = \overline{\Pi(g)(v)}.
\end{equation*}
We denote by $\bar{\Pi}_{\mathbb{C}}$ the extension of $\G$ to $\G_{\mathbb{C}}$. Then,
\begin{equation*}
\bar{\Pi}_{\mathbb{C}}(g_{1}, g_{2}) = \bar{\Pi}(g_{2}).
\end{equation*}
\end{enumerate}

\end{rema}

We denote by $\mathbb{H} = \mathbb{R}[i] \oplus j\mathbb{R}[i]$ the set of quaternionic numbers. We have a natural embedding of $\M(n, \mathbb{H})$ in $\M(2n, \mathbb{R}[i])$ given by:
\begin{equation}
\M(n, \mathbb{H}) \ni A + jB \to \begin{pmatrix} A & -\bar{B} \\ B & \bar{A} \end{pmatrix} \in \M(2n, \mathbb{R}[i]).
\label{EmbeddingQuaternions}
\end{equation}
We also denote by $\GL(n, \mathbb{H})$ its image in $\M(2n, \mathbb{C})$. More precisely, we have:
\begin{equation*}
\GL(n, \mathbb{H}) = \left\{g \in \GL(2n, \mathbb{C}), gJ = J\bar{g}\right\}.
\end{equation*}
and the complexification of $\GL(n, \mathbb{H})$ is $\GL(2n, \mathbb{C})$.

Similarly, we have
\begin{equation*}
\Sp(p, q, \mathbb{H})_{\mathbb{C}} = \Sp(2(p+q), \mathbb{C}) \qquad \O^{*}(n, \mathbb{H})_{\mathbb{C}} = \O(2n, \mathbb{C}),
\end{equation*}
where we identify $\Sp(p, q, \mathbb{H})$ and $\O^{*}(n, \mathbb{H})$ with their images in $\M(2n, \mathbb{C})$.

We are now able to prove the duality for the real orthogonal group. We divide the proof of the duality in three differents propositions.

\begin{prop}

Let $(\G, \G') = (\GL(\U), \GL(\V))$ or $(\Sp(\U, \b_{\U}), \Sp(\V, \b_{\V})$ be a dual pair in their corresponding orthogonal group $\O(\E, \b)$, where $\U$ and $\V$ are real vector spaces. Then, the spinorial representation of $\Pin(\E, \b)$ sets up a Howe correspondence for the pair $(\tilde{\G}, \tilde{\G}')$.

\end{prop}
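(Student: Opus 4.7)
The plan is to follow verbatim the strategy executed for complex dual pairs in Proposition \ref{ComplexPairs}, with one additional preliminary step: since $\G$ is now a real reductive group, we complexify before invoking Howe's invariant theorems. By the double commutant theorem it suffices to show $\Comm_{\End(\S)}\langle\tilde{\G}\rangle = \langle\tilde{\G'}\rangle$, and via Equation \eqref{DoubleCommutantIsomorphism} this reduces to computing $\Lambda(\E_{\mathbb{C}})^{(\G,\pr\tilde{\rho_1})}$. Because $\G$ acts $\mathbb{R}$-rationally on the complex vector space $\E_{\mathbb{C}}$ and is Zariski-dense in its complexification $\G_{\mathbb{C}}$, one has
\begin{equation*}
\Lambda(\E_{\mathbb{C}})^{\G} = \Lambda(\E_{\mathbb{C}})^{\G_{\mathbb{C}}}.
\end{equation*}

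For the linear pair, $\E_{\mathbb{C}} = \U \otimes \V \oplus \U^{*}\otimes \V^{*}$ with $\U = \mathbb{C}^n$, $\V = \mathbb{C}^m$, and $\G_{\mathbb{C}} = \GL(\U)$. Theorem \ref{HoweGL} then gives
\begin{equation*}
\Lambda(\E_{\mathbb{C}})^{\GL(\U)} = \langle \mathfrak{gl}(\V)\rangle,
\end{equation*}
exactly as in the proof of Proposition \ref{ComplexPairs}. For the symplectic pair, $\E_{\mathbb{C}} = \U \otimes \V$ with $\U = \mathbb{C}^{2n}$, $\V = \mathbb{C}^{2m}$, and $\G_{\mathbb{C}} = \Sp(\U, (b_{\U})_{\mathbb{C}})$. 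Theorem \ref{HoweSp} then yields $\Lambda(\E_{\mathbb{C}})^{\Sp(\U)} = \langle\mathfrak{sp}(\V)\rangle$.

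To conclude I would transport these descriptions through $\T_{1}\circ\T^{-1}$: the generators $\lambda_{a,b}$ (resp. $\gamma_{a,b}$) appearing in Theorems \ref{HoweGL} and \ref{HoweSp} are quadratic in the chosen bases of $\E_{\mathbb{C}}$, and under the symbol map $\T$ composed with $\tilde{\gamma}$, quadratic elements of $\Lambda(\E_{\mathbb{C}})$ become quadratic Clifford elements, which are precisely the infinitesimal generators $d\Pi(X)$ for $X \in \mathfrak{g}'_{\mathbb{C}}$. Thus the commutant algebra is generated by $d\Pi(\mathfrak{g}'_{\mathbb{C}})$. Since $\tilde{\G'}$ is a double cover of a real reductive group with Lie algebra $\mathfrak{g}'$ whose complexification is $\mathfrak{g}'_{\mathbb{C}}$, and $\Pi$ is analytic, we have $\langle\tilde{\G'}\rangle = \langle d\Pi(\mathfrak{g}'_{\mathbb{C}})\rangle$, closing the argument.

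The main obstacle is the last identification: one must match the explicit generators $\lambda_{a,b}$ or $\gamma_{a,b}$, after applying $\T^{-1}$ and $\tilde{\gamma}$, with the image under $d\Pi$ of an explicit basis of $\mathfrak{gl}(\V)$ or $\mathfrak{sp}(\V)$. This is a bookkeeping exercise with Clifford-algebra sign conventions, but it is conceptually identical to the calculation in Proposition \ref{ComplexPairs}; no new structural input is needed, only the complexification step $\Lambda(\E_{\mathbb{C}})^{\G} = \Lambda(\E_{\mathbb{C}})^{\G_{\mathbb{C}}}$, which is justified by Zariski density of a real reductive algebraic group in its complexification.
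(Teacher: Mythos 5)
Your proposal is correct and mirrors the paper's own argument, which is simply a pointer to the proof of Proposition~\ref{ComplexPairs} run through the complexification reduction $\Lambda(\E_{\mathbb{C}})^{\G}=\Lambda(\E_{\mathbb{C}})^{\G_{\mathbb{C}}}$ (set up in the paragraphs preceding the statement) followed by Theorems~\ref{HoweGL} and~\ref{HoweSp}. The only point worth tightening is the final identification $\langle\tilde{\G'}\rangle=\langle d\Pi(\mathfrak{g}'_{\mathbb{C}})\rangle$ when $\G'=\GL(m,\mathbb{R})$ is disconnected: one should instead sandwich $\langle d\Pi(\mathfrak{g}')\rangle\subseteq\langle\tilde{\G'}\rangle\subseteq\Comm_{\End(\S)}\langle\tilde{\G}\rangle=\langle d\Pi(\mathfrak{g}')\rangle$ to force equality, but this is cosmetic and the conclusion stands.
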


\begin{proof}

The proof is similar to the one of Proposition \ref{ComplexPairs}.

\end{proof}

\begin{prop}

Let $(\G, \G') = (\GL(\U), \GL(\V)), (\U(\U, \b_{\U}), \U(\V, \b_{\V})), (\Sp(\U, \b_{\U}), \Sp(\V, \b_{\V}))$ be a dual pair in their corresponding real orthogonal group $\O(\E, \b)$, where $\U$ and $\V$ are vector spaces over $\mathbb{C}$. Then, the spinorial representation of $\Pin(\E, \b)$ sets up a Howe correspondence for the pair $(\tilde{\G}, \tilde{\G}')$.

\end{prop}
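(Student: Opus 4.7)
The plan is to mirror the strategy of Proposition \ref{ComplexPairs}: via Equation \eqref{DoubleCommutantIsomorphism}, the Howe correspondence for $(\tilde{\G}, \tilde{\G}')$ reduces to showing that $\Lambda(\E_{\mathbb{C}})^{\G}$ is generated, as a subalgebra, by the image of the complexified Lie algebra $\mathfrak{g}'_{\mathbb{C}}$; the double commutant theorem then yields the irreducibility of each $\theta(\lambda)$. The new ingredient here is that $\G$ does not coincide with its algebraic complexification $\G_{\mathbb{C}}$, but since the action of $\G$ on $\E_{\mathbb{C}}$ is defined over $\mathbb{R}$ and extends to an algebraic action of $\G_{\mathbb{C}}$, one has $\Lambda(\E_{\mathbb{C}})^{\G} = \Lambda(\E_{\mathbb{C}})^{\G_{\mathbb{C}}}$ and the computation can be carried out on the complex side.

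For any complex vector space $\mathcal{F}$, its realification tensored with $\mathbb{C}$ splits as $\mathcal{F} \oplus \bar{\mathcal{F}}$; applied to the three cases this gives a decomposition of $\E_{\mathbb{C}}$ under $\G_{\mathbb{C}}$. For $(\GL(\U), \GL(\V))$, $\G_{\mathbb{C}} = \GL(\U) \times \GL(\U)$ acts on $(\U\otimes\V \oplus \U^{*}\otimes\V^{*}) \oplus (\bar{\U}\otimes\bar{\V} \oplus \bar{\U}^{*}\otimes\bar{\V}^{*})$, with each factor acting on a single summand; for $(\Sp(\U,\b_{\U}), \Sp(\V,\b_{\V}))$, $\G_{\mathbb{C}} = \Sp(\U) \times \Sp(\U)$ acts on $(\U \otimes \V) \oplus (\bar{\U} \otimes \bar{\V})$ in the same decoupled fashion; for $(\U(\U,\b_{\U}), \U(\V,\b_{\V}))$, $\G_{\mathbb{C}} = \GL(\U)$ acts on $(\U \otimes \V) \oplus (\bar{\U} \otimes \bar{\V})$, and using the non-degenerate Hermitian forms $\b_{\U}$ and $\b_{\V}$ to identify $\bar{\U}$ with $\U^{*}$ and $\bar{\V}$ with $\V^{*}$, this becomes the standard $\GL(\U)$-module $(\U \otimes \V) \oplus (\U^{*} \otimes \V^{*})$.

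In each case the invariants are then computed by direct application of Howe's theorems. For the $\GL$ and $\Sp$ pairs, $\Lambda(\E_{\mathbb{C}})^{\G_{\mathbb{C}}}$ factors as a tensor product of invariants on the holomorphic and anti-holomorphic summands, and Theorems \ref{HoweGL} and \ref{HoweSp} respectively give generators in $\mathfrak{gl}(\V) \oplus \mathfrak{gl}(\bar{\V}) = \mathfrak{g}'_{\mathbb{C}}$ and in $\mathfrak{sp}(\V,\b_{\V}) \oplus \mathfrak{sp}(\bar{\V},\bar{\b}_{\V}) = \mathfrak{g}'_{\mathbb{C}}$. For the unitary pair, Theorem \ref{HoweGL} applies directly to the standard $\GL(\U)$-module obtained above and gives generators in $\mathfrak{gl}(\V) = \mathfrak{u}(\V,\b_{\V})_{\mathbb{C}} = \mathfrak{g}'_{\mathbb{C}}$. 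Combining this with \eqref{DoubleCommutantIsomorphism} concludes the proof.

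The main obstacle is the bookkeeping around the complexification — in particular, verifying that the action of $\G_{\mathbb{C}}$ on $\E_{\mathbb{C}}$ has the claimed decoupled form in each case, and, in the unitary case, that the Hermitian forms do provide the required isomorphism of $\GL(\U)$-modules between $\bar{\U}$ and $\U^{*}$ (and likewise for $\V$), so that Theorem \ref{HoweGL} applies. Once these identifications are secured, the rest is a routine assembly of Howe's classical computations and the double commutant theorem.
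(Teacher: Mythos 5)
Your proposal is correct and follows essentially the same route as the paper: reduce via Equation~\eqref{DoubleCommutantIsomorphism} to computing $\Lambda(\E_{\mathbb{C}})^{\G}=\Lambda(\E_{\mathbb{C}})^{\G_{\mathbb{C}}}$, split $\E_{\mathbb{C}}$ into holomorphic and anti-holomorphic summands, and apply Howe's classical invariant theorems on each factor. One small improvement in your version: for the symplectic pair you correctly invoke Theorem~\ref{HoweSp} to get generators in $\mathfrak{sp}(\V,\b_{\V})\oplus\mathfrak{sp}(\bar\V,\bar\b_{\V})$, whereas the paper's text at the corresponding step cites Theorem~\ref{HoweO}, which appears to be a typo since the group preserving the summand $\U\otimes\V$ is $\Sp(\U)$, not an orthogonal group.
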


\begin{proof}

We start with the pair $(\G, \G') = (\U(\U, \b_{\U}), \U(\V, \b_{\V})) \subseteq \O((\U \otimes_{\mathbb{C}} \V)_{\mathbb{R}}, \b)$, where $\b = \Re(\b_{\U} \otimes \b_{\V})$. In this case, we have $\E = (\U \otimes_{\mathbb{C}} \V)_{\mathbb{R}}$ and according to \cite[Section~4.1]{TRIO}, we have 
\begin{equation*}
E_{\mathbb{C}} = \U \otimes_{\mathbb{C}} \V \oplus \tilde{\U} \otimes_{\mathbb{C}} \tilde{\V}.
\end{equation*}
According to Theorem \ref{HoweGL}, 
\begin{eqnarray*}
\Lambda(\E_{\mathbb{C}})^{\G} & = & \Lambda(\U \otimes_{\mathbb{C}} \V \oplus \tilde{\U} \otimes_{\mathbb{C}} \tilde{\V})^{\GL(\U)} = \Lambda(\U \otimes_{\mathbb{C}} \V \oplus (\bar{\U})^{*} \otimes_{\mathbb{C}} (\bar{\V})^{*})^{\GL(\U)} \\
                                                   & = & \left< \left(\U \otimes_{\mathbb{C}} \V \oplus (\bar{\U})^{*} \otimes_{\mathbb{C}} (\bar{\V})^{*}\right)^{\GL(\U)}\right> \\
                                                   & = & \left<\mathfrak{gl}(\U \otimes \V)^{\GL(\U)}\right> = \left<\mathfrak{gl}(\V)\right> = \left<\mathfrak{u}(\V)\right>
\end{eqnarray*}   
where $\left<\mathfrak{u}(\V)\right>$ is identified via the map $\T$ and $\tilde{\gamma}$ with the $\mathbb{C}$-algebra generated by the operators $d\Pi(X), X \in \mathfrak{u}(\V)$.

Let's now consider the dual pair $(\GL(\U), \GL(\V)) \in \O((\U \otimes \V)_{\mathbb{R}} \oplus (\U \otimes \V)^{*}_{\mathbb{R}}, \b)$. In this case, we have:
\begin{equation*}
\E_{\mathbb{C}} = \U \otimes \V \oplus \bar{\U} \otimes \bar{\V} \oplus \U^{*} \otimes \V^{*} \oplus \bar{\U}^{*} \otimes \bar{\V}^{*}                                         
\end{equation*}
and then
\begin{eqnarray*}
\Lambda(\E_{\mathbb{C}})^{\G} & = & \Lambda(\U \otimes \V \oplus \bar{\U} \otimes \bar{\V} \oplus \U^{*} \otimes \V^{*} \oplus \bar{\U}^{*} \otimes \bar{\V}^{*} )^{\GL(\U) \times \GL(\U)} \\
                                                  & = & \Lambda(\U \otimes \V  \oplus \U^{*} \otimes \V^{*})^{\GL(\U)} \otimes \Lambda( \bar{\U} \otimes \bar{\V} \oplus \bar{\U}^{*} \otimes \bar{\V}^{*} )^{\GL(\U)} \\
                                                   & = & \left< (\U \otimes \V  \oplus \U^{*} \otimes \V^{*})^{\GL(\U)}\right> \otimes \left<(\bar{\U} \otimes \bar{\V} \oplus \bar{\U}^{*} \otimes \bar{\V}^{*})^{\GL(\U)}\right> \text{using Theorem \ref{HoweGL}} \\
                                                   & = & \left<\mathfrak{gl}(\U \otimes \V)^{\GL(\U)}\right> \otimes \left<\mathfrak{gl}(\bar{\U} \otimes \bar{\V})^{\GL(\U)}\right>  \\
                                                   & = & \left< \mathfrak{gl}(\V)_{\mathbb{C}}\right> = \left< \mathfrak{gl}(\V)\right>.
\end{eqnarray*} 
We finish this proof with the dual pair $(\Sp(\U, \b_{\U}), \Sp(\V, \b_{\V}))$ in $\O(\E, \b)$, where $\E = (\U \otimes \V)_{\mathbb{R}}$. Then, we have:
\begin{equation*}
\E_{\mathbb{C}} = \U \otimes \V \oplus \bar{\U} \otimes \bar{\V}
\end{equation*}
and 
\begin{eqnarray*}
\Lambda(\E_{\mathbb{C}})^{\G} & = & \Lambda(\U \otimes \V \oplus \bar{\U} \otimes \bar{\V})^{\G \times \G} = \Lambda(\U \otimes \V)^{\G} \otimes \Lambda(\bar{\U} \otimes \bar{\V})^{\G} \\
                             & = & \left<\Lambda^{2}(\U \otimes \V)^{\G}\right> \otimes \left<\Lambda^{2}(\bar{\U} \otimes \bar{\V})^{\G}\right>\text{using Theorem \ref{HoweO}}  \\
                             & = & \left< \mathfrak{so}(\U \otimes \V)^{\G}\right> \otimes \left< \mathfrak{so}(\bar{\U} \otimes \bar{\V})^{\G}\right> \\
                             & = & \left<\mathfrak{g}'_{\mathbb{C}}\right> = \left<\mathfrak{g}'\right>.
\end{eqnarray*}                            

\end{proof}

\begin{prop}

Let $(\G, \G') = (\GL(\U), \GL(\V)), (\Sp(\U, \b_{\U}), \Sp(\V, \b_{\V}))$ or $(\O^{*}(\U, \b_{\U}), \O^{*}(\V, \b_{\V}))$ be a dual pair in their corresponding orthogonal group $\O(\E, \b)$, where $\U$ and $\V$ are vector spaces over $\mathbb{H}$. Then, the spinorial representation of $\Pin(\E, \b)$ sets up a Howe correspondence for the pair $(\tilde{\G}, \tilde{\G}')$.

\end{prop}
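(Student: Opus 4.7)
The plan is to follow the same complexification strategy used in the two preceding propositions. By Equation \eqref{TransferOfInvariants} and the double commutant theorem, proving the Howe correspondence for $(\tilde{\G},\tilde{\G'})$ reduces to showing
\begin{equation*}
\Lambda(\E_{\mathbb{C}})^{\G} = \left\langle \mathfrak{g}' \right\rangle
\end{equation*}
inside $\Lambda(\E_{\mathbb{C}})$. Since $\G$ is a real algebraic group acting $\mathbb{C}$-linearly on $\E_{\mathbb{C}}$, one has $\Lambda(\E_{\mathbb{C}})^{\G} = \Lambda(\E_{\mathbb{C}})^{\G_{\mathbb{C}}}$, so the problem becomes a purely complex invariant computation governed by Theorems \ref{HoweGL}, \ref{HoweO} and \ref{HoweSp}.

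The first, case-by-case step is to describe $\E_{\mathbb{C}}$ as a $\G_{\mathbb{C}} \times \G'_{\mathbb{C}}$-module, using the complexifications $\GL(n,\mathbb{H})_{\mathbb{C}} = \GL(2n,\mathbb{C})$, $\Sp(p,q,\mathbb{H})_{\mathbb{C}} = \Sp(2(p+q),\mathbb{C})$ and $\O^{*}(n,\mathbb{H})_{\mathbb{C}} = \O(2n,\mathbb{C})$ recalled just before the statement. With $n_1 = \dim_{\mathbb{H}}\U$ and $n_2 = \dim_{\mathbb{H}}\V$, the expected identifications are: $\E_{\mathbb{C}} \cong \mathbb{C}^{2n_1} \otimes \mathbb{C}^{2n_2} \oplus (\mathbb{C}^{2n_1})^{*} \otimes (\mathbb{C}^{2n_2})^{*}$ in the $(\GL,\GL)$ case, $\E_{\mathbb{C}} \cong \mathbb{C}^{2(p_1+q_1)} \otimes \mathbb{C}^{2(p_2+q_2)}$ in the $(\Sp,\Sp)$ case, and $\E_{\mathbb{C}} \cong \mathbb{C}^{2n_1} \otimes \mathbb{C}^{2n_2}$ in the $(\O^{*},\O^{*})$ case, with $\G_{\mathbb{C}}$ and $\G'_{\mathbb{C}}$ acting in the standard way on the respective tensor factors. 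Once this is in place, the second step is mechanical: Theorem \ref{HoweGL} identifies $\Lambda(\E_{\mathbb{C}})^{\GL(2n_1,\mathbb{C})}$ with $\langle \mathfrak{gl}(2n_2,\mathbb{C})\rangle$, Theorem \ref{HoweSp} identifies $\Lambda(\E_{\mathbb{C}})^{\Sp(2(p_1+q_1),\mathbb{C})}$ with $\langle \mathfrak{sp}(2(p_2+q_2),\mathbb{C})\rangle$, and Theorem \ref{HoweO} identifies $\Lambda(\E_{\mathbb{C}})^{\O(2n_1,\mathbb{C})}$ with $\langle \mathfrak{so}(2n_2,\mathbb{C})\rangle$. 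By the complexification identities above, each of these subalgebras equals $\langle \mathfrak{g}'_{\mathbb{C}}\rangle = \langle \mathfrak{g}'\rangle$, which is the required identity.

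The main obstacle lies in the first step: one must check that the real bilinear form $\b$ on $\E$ complexifies compatibly with the quaternionic structures on $\U$ and $\V$. Concretely, via the embedding \eqref{EmbeddingQuaternions} the quaternionic structure on each factor becomes an antilinear map $J$ on $\mathbb{C}^{2n_i}$ satisfying $J^{2} = -\Id$; extending scalars to $\mathbb{C}$ splits this into $\pm i$-eigenspaces, and it is this splitting that produces the displayed decompositions of $\E_{\mathbb{C}}$. One then has to verify, by a direct calculation parallel to the unitary case of the preceding proposition, that $\b_{\mathbb{C}}$ pairs these eigenspaces exactly in the way required to identify the action of $\G_{\mathbb{C}}$ on $\E_{\mathbb{C}}$ with the standard tensor-product action on the claimed model; in particular, the hermitian/skew-hermitian nature of the forms defining $\Sp(p,q,\mathbb{H})$ and $\O^{*}(n,\mathbb{H})$ must be tracked through this process to see which type of pairing (orthogonal or symplectic) appears on each summand. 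Once this identification is secured, the remainder of the argument is formally identical to that of the two preceding propositions.
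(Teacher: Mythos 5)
Your proposal takes essentially the same route as the paper: reduce via the double commutant theorem and Equation \eqref{TransferOfInvariants} to a computation of $\Lambda(\E_{\mathbb{C}})^{\G_{\mathbb{C}}}$, identify $\E_{\mathbb{C}}$ as a tensor-product module for the complexified pair ($\GL(n,\mathbb{H})_{\mathbb{C}}=\GL(2n,\mathbb{C})$, $\Sp(p,q,\mathbb{H})_{\mathbb{C}}=\Sp(2(p+q),\mathbb{C})$, $\O^{*}(n,\mathbb{H})_{\mathbb{C}}=\O(2n,\mathbb{C})$), and then invoke Theorems \ref{HoweGL}, \ref{HoweSp}, \ref{HoweO} to conclude that the invariants are generated by $\mathfrak{g}'_{\mathbb{C}}$. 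The only difference is presentational: you flag the compatibility of $\b_{\mathbb{C}}$ with the quaternionic structure as a step still to be verified, whereas the paper simply asserts the identification $\E_{\mathbb{C}}=\U_1\otimes\V_1$ (resp. $\U_1\otimes\V_1\oplus\U_1^*\otimes\V_1^*$) without comment.
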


\begin{proof}

We start with the dual pair $(\G = \GL(\U), \G' = \GL(\V)) \subseteq \O(\E, \b)$. In this case, we have $\E = \U \otimes_{\mathbb{H}} \V \oplus \U^{*} \otimes_{\mathbb{H}} \V^{*}$ and then, $\E_{\mathbb{C}} = \U_{1} \otimes V_{1} \oplus \U^{*}_{1} \otimes V^{*}_{1}$, where $\U_{1}$ and $V_{1}$ are the complex space associated to $\U$ and $\V$ respectively. Then, we have:
\begin{eqnarray*}
\Lambda(\E_{\mathbb{C}})^{\G} & = & \Lambda(\E_{\mathbb{C}})^{\G_{\mathbb{C}}} = \Lambda(\U_{1} \otimes V_{1} \oplus \U^{*}_{1} \otimes V^{*}_{1})^{\GL(\U_{1})} \\
                                                  & = & \left< (\U_{1} \otimes V_{1} \oplus \U^{*}_{1} \otimes V^{*}_{1})^{\GL(\U_{1})}\right> =\left<\mathfrak{gl}(\U_{1} \otimes V_{1})^{\GL(U_{1})}\right> \\
                                                  & = & \left< \mathfrak{gl}(V_{1}) \right> = \left< \mathfrak{g}'\right>.
\end{eqnarray*}
Similarly, let $(\G, \G') = (\Sp(\U), \Sp(\V))$ in $\O(\E, \b)$ with $\E = \U \otimes_{\mathbb{H}} \V$. Then, $\E_{\mathbb{C}} = \U_{1} \otimes \V_{1}$ and    
\begin{eqnarray*}                                            
\Lambda(\E_{\mathbb{C}})^{\G} & = & \Lambda(\E_{\mathbb{C}})^{\G_{\mathbb{C}}} = \Lambda(\U_{1} \otimes V_{1})^{\Sp(\U_{1})} \\
                                                  & = & \left< \mathfrak{so}(\U_{1} \otimes V_{1})^{\Sp(\U_{1})}\right> \text{using Theorem \ref{HoweSp}} \\
                                                  & = & \left<\mathfrak{sp}(\V_{1})\right> \\
                                                  & = & \left< \mathfrak{g}' \right>.
\end{eqnarray*}
Finally, for $(\G, \G') = (\O^{*}(\U), \O^{*}(\V))$ in $\O(\E, \b)$ with $\E = \U \otimes_{\mathbb{H}} \V$we have $\E_{\mathbb{C}} = \U_{1} \otimes \V_{1}$ and    
\begin{eqnarray*}                                            
\Lambda(\E_{\mathbb{C}})^{\G} & = & \Lambda(\E_{\mathbb{C}})^{\G_{\mathbb{C}}} = \Lambda(\U_{1} \otimes V_{1})^{\O(\U_{1})} \\
                                                  & = & \left< \mathfrak{so}(\U_{1} \otimes V_{1})^{\O(\U_{1})}\right> = \left<\mathfrak{o}(\V_{1})\right> \\
                                                  & = & \left< \mathfrak{g}' \right>.
\end{eqnarray*}

\end{proof}

\begin{rema}
\label{FinalRemark}
For complex dual pairs $(\O(n,\mathbb{C}),\O(m,\mathbb{C}))$ in $\O(nm,\mathbb{C})$ with $n,m>2$, one can understand the duality of lifts directly from the real case done in \cite{SLU} because $\O(N,\mathbb{R})$ is a maximal compact in $\O(N,\mathbb{C})$ in general. 

In \cite[Corollary~3.5]{SLU}, Slupinski proved that $(\tilde{\O(n,\mathbb{R})},\tilde{\O(m,\mathbb{R})})$ is a dual pair in $\Pin(nm,\mathbb{R})$ if both $n$ and $m$ are odd. If $n$ or $m$ is even and $n,m>2$ then 
$(\tilde{\SO(n,\mathbb{R})},\tilde{\O(m,\mathbb{R})})$ and $(\tilde{\O(n,\mathbb{R})},\tilde{\SO(m,\mathbb{R})})$ are both dual pairs in $\Pin(nm,\mathbb{R})$. Slupinski also computed (see \cite[Corollary~2.13]{SLU}) the isomorphism classes of these lifts. Because of Lemma \ref{redcomp}, we have the same results for complex pairs changing $\mathbb{R}$ to $\mathbb{C}$. 

Finally, Slupinski proved that for these real dual pairs, the spinorial representation sets up a Howe correspondence. Because there is a bijection between finite dimensional representations of a complex simple group and its compact maximal, the corresponding result for the complex pair is true as well. 

\end{rema}

\end{document}